\def\thm@space@setup{%
  \thm@preskip=\parskip \thm@postskip=0pt
}
\newtheorem{teo}{Theorem}[section]
\newtheorem{lem}[teo]{Lemma}
\newtheorem{prop}[teo]{Proposition}
\newtheorem{cor}[teo]{Corollary}
\newtheorem{que}{Question}
\newtheorem*{teo*}{Theorem}
\newtheorem{cnj}{Conjecture}
\newtheorem*{prop:bound}{Theorem \ref{prop:bound}}
\newtheorem*{teo:cong}{Theorem \ref{main}}
\newtheorem*{lem:stima1}{Lemma \ref{stima1}}
\newtheorem*{lem:stima3}{Lemma \ref{stima3}}
\newtheorem*{lem:symm}{Lemma \ref{symm}}
\newtheorem*{teo:AMU}{Conjecture \ref{AMU}}
\newtheorem*{teo:moreAMU}{Theorem \ref{moreAMU}}
\newtheorem*{cor:upper}{Corollary \ref{upper}}
\theoremstyle{definition}
\newtheorem{dfn}[teo]{Definition}
\newtheorem*{dfn*}{Definition}
\newtheorem{oss}[teo]{Remark}
\newtheorem*{dom*}{Question}
\newcommand{\R}{\mathbb{R}}
\newcommand{\Z}{\mathbb{Z}}
\newcommand\abs[1]{\left|#1\right|}
\newcommand{\C}{\mathbb{C}}
\newcommand{\ra}{\rightarrow}
\newcommand{\li}{\textrm{Li}_2}
\newcommand{\lmin}{{\ell_{\rm min}}}
\newcommand{\vol}{{\rm vol}}
\newcommand{\bdy}{{\partial}}
\title{Growth of quantum $6j$-symbols and applications to the Volume Conjecture}
\author{Giulio Belletti, Renaud Detcherry, Efstratia Kalfagianni\footnote{E.K. is supported by NSF Grants DMS-1404754 and DMS-1708249 and a grant from the Institute for Advanced Study School of Mathematics}  \& Tian Yang\footnote{T.Y. is supported by NSF Grant DMS-1812008}}
 \date{}
\begin{document}
\maketitle

\begin{abstract}
We prove the Turaev-Viro  invariants volume conjecture for a ``universal''  class of cusped  hyperbolic 3-manifolds
that produces all 3-manifolds with empty or toroidal boundary by Dehn filling.
This leads to two-sided bounds  on the volume of any hyperbolic 3-manifold with empty or toroidal boundary
in terms of the growth rate of the Turaev-Viro invariants of the complement of an appropriate link contained in the manifold.
We also  provide evidence for a conjecture of Andersen, Masbaum and Ueno (AMU conjecture) about certain  quantum representations of
 surface mapping class groups.
 
A key step in our proofs
is finding a sharp upper bound on the growth rate of the quantum $6j-$symbol evaluated at $q=e^{\frac{2\pi i}{r}}.$ 
\end{abstract}


\section{Introduction}

The Turaev-Viro invariants $TV_r(M,q)$ of a compact $3$-manifold $M$ \cite{TuraevViro} are real numbers  depending on 
an integer  $r\geqslant 3,$ called level, and a $2r$-th root of unity $q.$  It has been long   known  that when one chooses $q=e^{\frac{\pi i}{r}},$ the invariants $TV_r(M,q)$ grow at most polynomially in $r.$
In contrast to that, in  \cite{Chen-Yang}, Chen and Yang's extensive computation
of the case $q=e^{\frac{2\pi i}{r}}$ suggests that for hyperbolic manifolds the growth is instead exponential and determines the volume.  They  stated  the following.

\begin{cnj}\label{volconj}
 Let $M$ be a hyperbolic $3$-manifold, either closed, with cusps, or compact with totally geodesic boundary. Then as $r$ varies along the odd natural numbers,
 $$
 \lim_{r\to \infty} \frac{2\pi}{r}\log\left|TV\left(M,e^{\frac{2\pi i}{r}}\right)\right|    =\textrm{Vol}(M).
 $$
 
\end{cnj}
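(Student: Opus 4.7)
I would attack Conjecture~\ref{volconj} in two stages: first prove it for a distinguished ``universal'' family of cusped hyperbolic 3-manifolds, and then propagate the result to arbitrary hyperbolic $N$ with empty or toroidal boundary by realizing $N$ as a Dehn filling on a manifold in this family. A natural candidate for the family is the collection of complements of \emph{fundamental shadow links} in connected sums of $S^2\times S^1$: for these, $TV_r(M,e^{2\pi i/r})$ admits an explicit state-sum presentation whose summands are (up to tame prefactors) products of quantum $6j$-symbols at $q=e^{2\pi i/r}$, and $M$ itself decomposes into regular ideal hyperbolic octahedra whose total volume equals $\textrm{Vol}(M)$. One octahedron corresponds to one $6j$-symbol, so the state sum is set up to be compared term-by-term with the geometric pieces of $M$.

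\textbf{Asymptotics of $6j$-symbols.} The central technical step, and the one I expect to be the hardest, is a sharp exponential upper bound
\[
\bigl|\{6j\}_q(c)\bigr| \;\leq\; \exp\!\Bigl(\tfrac{r}{2\pi}\, v_8 + o(r)\Bigr),
\]
uniform in admissible colorings $c$, where $v_8$ denotes the volume of the regular ideal octahedron. My plan is to rewrite the quantum $6j$-symbol as an alternating sum of ratios of quantum factorials, approximate $\log|(q)_n|$ by a Lobachevsky-type integral via the asymptotics of the dilogarithm at $q=e^{2\pi i/r}$, and verify that the maximum of the resulting real potential over the admissible region equals exactly $v_8$. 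Combined with the polynomial bound on the number of colorings and the fixed number of $6j$-symbols per state sum, this yields
\[
\limsup_{r\to\infty}\frac{2\pi}{r}\log\bigl|TV_r(M,e^{2\pi i/r})\bigr|\;\leq\;\textrm{Vol}(M)
\]
for $M$ in the universal class.

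\textbf{Lower bound and Dehn filling.} For the matching lower bound I would isolate an explicit sequence of admissible colorings $c_r$ corresponding to the complete hyperbolic shape parameters of the ideal octahedra, and apply a saddle-point analysis to show that the associated term in the state sum grows like $\exp(r\,\textrm{Vol}(M)/2\pi)$; the non-degeneracy of the hyperbolic critical point should ensure that neighbouring terms combine constructively rather than cancel this contribution. To reach a general hyperbolic $N$, one writes $N=M_L(p/q)$ with $M_L$ in the universal family and compares $TV_r(N)$ with $TV_r(M_L)$. By Thurston's hyperbolic Dehn surgery theorem $\textrm{Vol}(N)<\textrm{Vol}(M_L)$ in general, so the unfilled invariant only yields the two-sided bounds advertised in the abstract rather than the equality for $N$ itself. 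Closing that gap, by producing a surgery formula for $TV_r$ compatible with the exponential scaling so that $TV_r(N)$ genuinely detects $\textrm{Vol}(N)$, is the principal remaining obstacle in this approach.
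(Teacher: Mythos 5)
Your plan coincides with the paper's own treatment of this statement: the conjecture is established there only for fundamental shadow link complements (Theorem \ref{main}), via the sharp bound $v_8$ on quantum $6j$-symbols at $q=e^{\frac{2\pi i}{r}}$ (Theorem \ref{prop:bound}), the shadow formula for $RT_r$ of fundamental shadow links (Lemma \ref{shform}), the identity $TV_r(M_c\setminus L)=2^{b_2(M_c\setminus L)}\sum_{col}\left|RT_r(M_c,L,col)\right|^2$ (Proposition \ref{tvtort}), and the explicit coloring $\bigl(\frac{r\pm1}{2},\dots,\frac{r\pm1}{2}\bigr)$ attaining the bound (Lemma \ref{r/2}) --- with the small simplification that no saddle-point or constructive-interference argument is needed for the lower bound, since the sum over colorings has non-negative terms and all summands of that particular $6j$-symbol share the same sign. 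The Dehn-filling step likewise yields only the two-sided volume estimates of Theorem \ref{Dehnvol}, so the ``principal remaining obstacle'' you flag is exactly where the paper also stops: the conjecture for a general hyperbolic $M$ is not proved there either.
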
 

The main result in this article is to verify Conjecture \ref{volconj} for complements of fundamental shadow links.
These links, first considered by  Costantino and D. Thurston \cite{CosThurston}, are an infinite family of hyperbolic  links in connected sums $\#^{c+1}(S^1\times S^2),$ for any $c>0,$ with the following properties.

\begin{enumerate}
 \item The volume of any fundamental shadow link in  $\#^{c+1}(S^1\times S^2)$  is  equal to $2cv_8,$  where $v_8\cong 3.66$ is the volume of the regular ideal hyperbolic octahedron.  
 \item The Reshetikhin-Turaev invariants on any  fundamental shadow link have simple formulae (see Lemma \ref{shform}).
 \item The links form a universal class, in the sense that any orientable $3$-manifold with empty or toroidal  boundary is obtained from  a complement of a fundamental shadow by Dehn filling.
 \end{enumerate}
The main result of this article is the following.

\begin{teo}\label{main} For any $c>0,$ Conjecture \ref{volconj} holds for the complement of any  fundamental shadow link $L$ in  $\#^{c+1}(S^1\times S^2).$
\end{teo}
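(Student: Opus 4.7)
The approach is to express $TV_r$ of the fundamental shadow link complement as an explicit nonnegative state sum of quantum $6j$-symbols, and then match its growth rate to $2cv_8$ by combining a sharp upper bound on individual $6j$-symbols with an analysis of a single dominant coloring.

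First, I would combine the identity relating Turaev--Viro and Reshetikhin--Turaev invariants for a 3-manifold $M$ with toroidal boundary,
$$TV_r\!\left(M, e^{\frac{2\pi i}{r}}\right) \ = \ \eta_r \sum_{\mathbf{n}\ \text{admissible}} \Bigl| RT_r\!\left(\#^{c+1}(S^1\times S^2),\,L,\,\mathbf{n}\right)\Bigr|^2,$$
where the sum runs over admissible colorings of the components of $L$ and $\eta_r$ has at most polynomial growth in $r$, with the explicit formula of Lemma \ref{shform}. This re-expresses $TV_r(M, e^{2\pi i/r})$ as a nonnegative sum, indexed by admissible colorings, of products of $2c$ quantum $6j$-symbols, one per ideal regular octahedron in the Costantino--D.~Thurston decomposition of the complement.

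For the upper bound I would invoke the sharp bound on the $6j$-symbol highlighted in the abstract: at $q = e^{2\pi i/r}$ every admissible $6j$-symbol satisfies $|\{6j\}_r| \leq \exp\!\left(\tfrac{r}{2\pi}v_8 + o(r)\right)$. Multiplying this bound over the $2c$ octahedral vertices and noting that the number of admissible colorings is polynomial in $r$, one obtains
$$\limsup_{r\to\infty}\ \frac{2\pi}{r}\log\bigl|TV_r(M, e^{2\pi i/r})\bigr|\ \leq\ 2cv_8\ =\ \vol(M).$$

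The matching lower bound is where the main difficulty lies. Since every summand in the state sum is nonnegative, it suffices to exhibit a single coloring $\mathbf{n}^{\ast}$ whose contribution already grows at rate $2cv_8$. The natural choice is the balanced coloring $\mathbf{n}^{\ast} \approx ((r-2)/2,\ldots,(r-2)/2)$, which should drive each of the $2c$ tetrahedral vertices into the configuration of the regular ideal octahedron, forcing every individual $6j$-symbol to saturate the upper bound up to an $o(r)$ error. To verify this saturation I would perform a saddle-point / Poisson-summation analysis of the $6j$-symbol at this critical coloring, identifying the dominant critical point of the associated potential with the regular ideal octahedron and ruling out oscillatory cancellation within the symbol itself. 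Carrying out this asymptotic matching --- and, in particular, confirming that the sharpness in the $6j$-symbol upper bound is actually attained at the balanced coloring --- is the main technical obstacle, and its resolution is precisely where the sharpness statement advertised in the abstract is indispensable.
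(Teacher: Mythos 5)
Your proposal mirrors the paper's proof essentially step for step: write $TV_r(M_c\setminus L)$ as a nonnegative sum $2^{b_2}\sum_{col}|RT_r(M_c,L,col)|^2$ (Proposition~\ref{tvtort}), invoke the shadow formula (Lemma~\ref{shform}) to reduce each term to a product of $c$ quantum $6j$-symbols, obtain the upper bound $2cv_8$ from Theorem~\ref{prop:bound} plus the polynomial count of colorings, and obtain the lower bound by discarding all but one balanced coloring.

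The one place where you gesture rather than prove is precisely where the paper's Lemma~\ref{r/2} sits, and two small points there are worth flagging. First, the balanced color must lie in $J_r=\{0,2,\dots,r-3\}$ (even colors); since $r$ is odd, $(r-2)/2$ is not an integer, and the correct choice is $\frac{r\pm1}{2}$ with the sign picked to make it even. Second, you propose a saddle-point/Poisson-summation analysis to rule out cancellation, but the paper's argument is more elementary: at this coloring one checks directly that every summand $S_z$ in the defining sum \eqref{sixj} has the same sign (so the Lobachevski estimate \eqref{estimate} becomes an equality), that $\Delta(\pi,\pi,\pi)=0$, and that the potential $F$ attains $8\Lambda(\pi/4)=v_8$ at $Z=7\pi/4$. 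You would need to carry out this sign computation rather than appeal to the sharpness ``advertised in the abstract'' as a black box; once done, the rest of your outline closes the proof exactly as in Section~\ref{secvol}.
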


Detcherry, Kalfagianni and Yang  \cite{DKY} verified
Conjecture \ref{volconj}   for the complements of the Borromean ring and of the figure eight knot, and Ohtsuki  \cite{Ohtsuki} verified it for all hyperbolic 3-manifolds obtained by integral Dehn surgeries on the figure eight knot.  In \cite{Ka-Ho1} Wong verified Conjecture \ref{volconj} for certain octahedral links in $S^3$ called Whitehead chains and Belletti \cite{BellettI} proved it for complements of families of octahedral links
in connected sums copies of $S^1\times S^2$ that are not treated by Theorem \ref{volconj}.
Furthermore, Conjecture \ref{volconj} has been generalized to assert that the Turaev-Viro invariants determine the Gromov norm of  any compact orientable  3-manifold \cite{DK}. 
The generalized conjecture was proven for all  Gromov norm zero links
in 3-manifolds that are connected sums of  $S^3$ with copies of $S^1\times S^2$ by Detcherry and Kalfagianni \cite{DK}. An alternative proof for the case of Gromov norm zero knots in $S^3$ was given in \cite{DKY}.
The conjecture was also 
 shown to be closed under certain link cabling and satellite operations \cite{D, DK, Ka-Ho2}.

In \cite{C:volconj}   Costantino proved an extension of Kashaev's original volume conjecture \cite{kashaev:vol-conj} for fundamental shadow links.
His approach was to consider a version of the colored Jones polynomials of links in 
manifolds of the form $\#^{c+1}(S^1\times S^2),$ and for fundamental shadow links relate its asymptotics to the volume of their complement.

The basic building block in the definition of the Turaev-Viro invariants
 is the quantum $6j$-symbol. 
We  will recall the definitions and basic properties in Section \ref{sec6j}.
A  key ingredient in our proof  of Theorem \ref{main} is the following
upper bound on the growth of the quantum $6j$-symbol.

\begin{teo}\label{prop:bound}
For any $r$ and any $r$-admissible $6$-tuple $(n_1,n_2,n_3,n_4,n_5,n_6),$ we have
$$ \frac{2 \pi}{r}\log \abs{\Bigg|\begin{matrix}
   n_1 &n_2&n_3\\
   n_4&n_5&n_6
  \end{matrix}\Bigg|_{q=e^{\frac{2\pi i}{r}}}} \leqslant v_8+O\left(\frac{\log(r)}{r}\right).$$
\end{teo}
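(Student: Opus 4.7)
\textbf{Proof plan for Theorem \ref{prop:bound}.} The plan is to start from the Racah formula expressing the quantum $6j$-symbol at $q=e^{2\pi i/r}$ as an explicit alternating sum over a single index $z$, each summand being a signed ratio of quantum factorials times a product of four ``quantum triangle'' factors $\Delta(\cdot,\cdot,\cdot)$. Writing $\{n\}=2\sin(\pi n/r)$ so that $|[n]!_{q=e^{2\pi i/r}}|=\prod_{k=1}^{n}|\{k\}|$, the first step is to apply the triangle inequality
\[
\Bigl|\bigl\{\begin{smallmatrix}n_1 & n_2 & n_3\\ n_4 & n_5 & n_6\end{smallmatrix}\bigr\}\Bigr|\leq \sum_{z} \bigl|T_z(n_1,\dots,n_6)\bigr|,
\]
and to absorb the $O(r)$ range of $z$ and the four triangle factors into an $O(\log r)$ correction after taking logarithms. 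The growth rate is thereby reduced to the maximum, over admissible $(n_1,\dots,n_6)$ and over the $O(r)$ values of $z$, of $\frac{2\pi}{r}\log|T_z|$.

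Next I would invoke the standard asymptotic (which is an easy consequence of the Riemann sum approximation of $-\int_0^\theta \log|2\sin t|\,dt$)
\[
\log \bigl|[n]!\bigr|_{q=e^{2\pi i/r}}=-\frac{r}{\pi}\,\Lambda\!\left(\frac{\pi n}{r}\right)+O(\log r),
\]
where $\Lambda$ is the Lobachevsky function, valid uniformly for $0\le n\le r-1$. Setting $\alpha_i=\pi n_i/r$ and $\zeta=\pi z/r$, and substituting into the (scaled, log) Racah expression, the desired upper bound becomes
\[
\frac{2\pi}{r}\log|T_z|=-2\sum_{j} \varepsilon_j\,\Lambda(\theta_j)+O\!\left(\frac{\log r}{r}\right),
\]
where the $\theta_j$ are the explicit linear combinations of $\alpha_1,\dots,\alpha_6,\zeta$ appearing as arguments of the quantum factorials in $T_z$, and $\varepsilon_j\in\{\pm 1\}$. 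The problem then reduces to bounding the continuous optimization
\[
V(\alpha_1,\dots,\alpha_6,\zeta):=-2\sum_j \varepsilon_j\,\Lambda(\theta_j)
\]
over the convex polytope cut out by the admissibility inequalities $\alpha_i\in[0,\pi]$ and the various nonnegativity conditions that ensure all factorial arguments lie in $[0,r]$.

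The main step, and the main obstacle, is showing $\sup V=v_8$. For this I would identify $V$ with (twice) the volume of a suitable hyperbolic object. Concretely, I expect $V$ to be, after a change of variables, the volume of a hyperbolic truncated tetrahedron whose six edge half-lengths (or dihedral angles, depending on the regime) are determined by the $\alpha_i$, with the auxiliary parameter $\zeta$ serving as a Schläfli-type variable that is free to be optimized. Differentiating in $\zeta$ and applying the elementary identity $\Lambda'(\theta)=-\log|2\sin\theta|$, the critical point equation reduces to a single trigonometric relation; one then checks that at any interior critical point the Hessian in $\zeta$ is negative, so the relevant maximum is attained either on the boundary of the polytope or at an interior critical $\zeta$. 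After this reduction, the remaining task is the geometric assertion that the volume of any (possibly partially ideal, possibly degenerate) truncated hyperbolic tetrahedron is at most $v_8=8\Lambda(\pi/4)$, with equality achieved in the degenerate limit where the truncated tetrahedron becomes the regular right-angled ideal octahedron. This last inequality can be established either by a direct convexity/Schläfli argument on $V$ or by citing the maximum volume principle for truncated tetrahedra; this sharp maximization is the heart of the argument and the place where the specific constant $v_8$ enters.

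Finally, combining the continuous bound $V\le v_8$ with the $O(\log r/r)$ corrections from the triangle inequality, the factorial asymptotic, and the four triangle factors $\Delta$ yields the stated inequality.
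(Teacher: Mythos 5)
Your reduction has two genuine gaps. First, you propose to absorb the four triangle factors $\Delta(v_i)$ into the $O(\log r)$ error, but their logarithms are of order $r$, not $O(\log r)$: after applying Lemma \ref{stima} they contribute the leading-order terms $\nu(v_1)+\dots+\nu(v_4)$, and these cannot simply be discarded for an upper bound unless one proves they are nonpositive. In the paper this is exactly Lemma \ref{stima1}, which only holds when the angles of the triple lie in $[0,\pi]$; since admissible colors at $q=e^{2\pi i/r}$ give angles $\theta_i=2\pi n_i/r$ ranging over $[0,2\pi]$, one must first reduce to the case where at most one $\theta_i$ exceeds $\pi$ (Lemma \ref{sym}) and handle the all-angles-large case by the symmetry Lemma \ref{symm}. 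Related to this, your normalization $\{n\}=2\sin(\pi n/r)$ and $\log|[n]!|=-\tfrac{r}{\pi}\Lambda(\pi n/r)+O(\log r)$ is the asymptotic at $q=e^{i\pi/r}$; at $q=e^{2\pi i/r}$ the correct statement is Lemma \ref{stima} with argument $\tfrac{2\pi n}{r}$, and this doubling of the angular range is precisely what produces the constant $v_8$ here (at the other root the growth is only polynomial), so the slip is not cosmetic.

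Second, the heart of your plan -- identifying the continuous maximization with the volume of a truncated hyperbolic tetrahedron and citing the maximal-volume principle ``$\mathrm{vol}\leqslant v_8$'' -- only applies on the geometric region of parameters, i.e.\ when the limiting angles satisfy the Bao--Bonahon conditions (this is the setting of Theorem \ref{Vol} in the Appendix, where the constraints $0\leqslant Q_j-T_i\leqslant\tfrac{r-2}{2}$ and $\tfrac{r-2}{2}\leqslant T_i$ are imposed). Theorem \ref{prop:bound} is claimed for \emph{all} admissible $6$-tuples, most of which do not correspond to any ideal or hyperideal tetrahedron, so the function $V$ cannot be interpreted as a hyperbolic volume there and the geometric maximum principle gives no bound. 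The paper instead bounds $V$ over the whole admissibility polytope by a direct calculus argument (the change of variables $a_i=Z-U_i$, $b_j=V_j-Z$ and the boundary/non-smooth/critical-point analysis of Lemmas \ref{stima1} and \ref{stima3}), and uses the Murakami--Yano--Ushijima volume formula only to prove the symmetry in Lemma \ref{symm}. To repair your approach you would either have to prove that the supremum of $V$ over the non-geometric region is also at most $v_8$ (which is essentially the paper's Lemma \ref{stima3}), or find an argument reducing arbitrary admissible tuples to the geometric ones; as written, the proposal does not cover the general case.
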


 Theorem \ref{prop:bound} should be compared with Costantino's result (Theorem \ref{Vol}) that under certain constraints, the exponential growth rate of a sequence of quantum $6j$-symbols coincides with the volume of a hyperbolic truncated tetrahedron whose dihedral angles are determined by the sequence of $6$-tuples.  The inequality of Theorem \ref{prop:bound} is sharp in the sense that for some sequences the limit as $r\to \infty$ is $v_8$ (Lemma \ref{r/2}).

Combining Theorem \ref{main} with a result of Futer, Kalfagianni and Purcell \cite{fkp:filling}, we show that the volume of any hyperbolic 3-manifold $M,$ with empty or toroidal boundary,  is estimated 
in terms of the Turaev-Viro invariants of an appropriate link contained in $M$ and that the estimate is asymptotically sharp.

To state our result, given a hyperbolic 3-manifold $N$ containing $k$  embedded horocusps  choose a slope $s_i$ on the boundary torus of each of them,
  and let 
$l_{\text{min}}$  denote the shortest length of any of the $s_i.$ We write $M=N(s_1, \dots, s_k)$ for the 3-manifold obtained  by Dehn filling  $N$ along these $k$ slopes. 
Also as $r$ varies along the odd natural numbers let 
 $$lTV(N)=\liminf_{r\to\infty} \frac{2\pi}{r}\log\left|TV\left(M,e^{\frac{2\pi i}{r}}\right)\right| $$
and $$  LTV(N)= \limsup_{r\to\infty} \frac{2\pi}{r}\log\left|TV\left(M,e^{\frac{2\pi i}{r}}\right)\right|.$$
\vskip 0.03in

\begin{teo}\label{Dehnvol} 
Let  $M$ be a hyperbolic $3$--manifold possibly with cusps. There exists a cusped hyperbolic 3-manifold $N$ with $M=N(s_1, \dots, s_k),$  for some $k\geqslant1,$
and such that $lTV(N)=LTV(N)=\vol(N)$, and
$$ \alpha(\lmin) \  lTV(N) \leqslant  \vol(M) < lTV(N).$$
Here $0\leqslant \alpha(\lmin)\leqslant 1$ is an explicit function and $ \alpha(\lmin)$ approaches 1 as  as $\lmin\to \infty.$
\end{teo}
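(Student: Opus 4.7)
The plan is to construct $N$ as the complement of a fundamental shadow link, apply Theorem \ref{main} to identify $lTV(N)$ and $LTV(N)$ with $\vol(N)$, and then compare $\vol(M)$ to $\vol(N)$ using Thurston's strict monotonicity of hyperbolic volume under Dehn filling (for the upper bound) together with the Dehn filling volume estimate of Futer-Kalfagianni-Purcell \cite{fkp:filling} (for the lower bound).

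By the universality property (property (3) of the introduction), $M$ is a Dehn filling on some fundamental shadow link complement $N_0$. The slope lengths in $N_0$ need not exceed the $2\pi$ threshold required by \cite{fkp:filling}, so the next step is to augment: one adjoins extra shadow link components near each filling cusp, producing a larger fundamental shadow link whose complement $N$ still presents $M$ as a Dehn filling $M=N(s_1,\ldots,s_k)$, but now with shortest slope length $\lmin$ as large as desired. This is the familiar trick, employed throughout hyperbolic Dehn surgery, of trading the number of cusps for longer slope lengths; here it is carried out by inserting further octahedra into the shadow decomposition, so that the enlarged object is still a fundamental shadow link.

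With such $N$ in hand, Theorem \ref{main} gives $lTV(N)=LTV(N)=\vol(N)$. Thurston's theorem then gives $\vol(M)<\vol(N)=lTV(N)$, since $k\geqslant 1$ and the filling is nontrivial, and the estimate of \cite{fkp:filling} gives
$$\vol(M)\geqslant \left(1-\left(\frac{2\pi}{\lmin}\right)^2\right)^{\!3/2}\vol(N).$$
Setting $\alpha(\lmin)=(1-(2\pi/\lmin)^2)^{3/2}$ yields the desired lower bound, and $\alpha$ is an explicit function taking values in $[0,1]$ with $\alpha(\lmin)\to 1$ as $\lmin\to\infty$. The main obstacle in executing this plan is the augmentation step: one needs to verify that any hyperbolic $M$ with empty or toroidal boundary admits a fundamental shadow link presentation whose filling slopes can be made arbitrarily long, which comes down to understanding how the horocusp neighborhoods of the filling cusps behave as the underlying shadow is enlarged.
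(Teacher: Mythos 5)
Your overall route is the paper's route: realize $M$ as a Dehn filling of a fundamental shadow link complement via the Costantino--Thurston universality result, apply Theorem \ref{main} (through Theorem \ref{expo}) to get $lTV(N)=LTV(N)=\vol(N)$, use Thurston's strict monotonicity of volume under Dehn filling for the upper bound, and the Futer--Kalfagianni--Purcell estimate for the lower bound. However, your proposal hinges on an ``augmentation'' step --- enlarging the shadow presentation so that the filling slopes have length greater than $2\pi$ --- which you yourself flag as unverified, and this is a genuine gap as written. It is not at all clear that one can insert extra blocks (or extra link components) near the filling cusps and (a) remain inside the class of fundamental shadow links, (b) still present the same $M$ by Dehn filling, and (c) control the normalized slope lengths on the new cusps, since adding components changes the hyperbolic structure and hence the cusp shapes and horocusp neighborhoods in a way you have not estimated. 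Neither the paper nor the references you invoke establish such a statement, so as it stands your lower bound is only proved under an unproven hypothesis.

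The good news is that the step is unnecessary for the theorem as stated. The statement only asserts the existence of an explicit function $0\leqslant\alpha(\lmin)\leqslant 1$ with $\alpha(\lmin)\to 1$ as $\lmin\to\infty$; it does not claim that $\lmin$ can be made large. The paper simply defines $\alpha$ piecewise: $\alpha(x)=\left(1-\left(\frac{2\pi}{x}\right)^2\right)^{3/2}$ for $x>2\pi$ and $\alpha(x)=0$ for $x\leqslant 2\pi$. With this convention the lower bound $\alpha(\lmin)\,lTV(N)\leqslant\vol(M)$ is vacuously true when the shortest slope is short, and is exactly Theorem \ref{Thm:VolChange} combined with $lTV(N)=\vol(N)$ when $\lmin>2\pi$; the strict upper bound $\vol(M)<\vol(N)=lTV(N)$ is Thurston's theorem in all cases. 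So if you drop the augmentation and adopt the piecewise $\alpha$, your argument closes and coincides with the paper's proof. If you instead want the stronger assertion that every hyperbolic $M$ admits a fundamental shadow link presentation with arbitrarily long filling slopes, that would be a separate (and interesting) claim requiring its own proof.
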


Theorem \ref{main} also has application to a conjecture of Andersen, Masbaum and Ueno  about the quantum representations of surface mapping class groups (AMU conjecture) \cite{AMU}. 
For a compact  orientable surface of genus $g$ and $n$ boundary components
$\Sigma_{g, n},$ let $\mathrm{Mod}(\Sigma_{g, n})$ denote its  mapping class group.
The AMU conjecture asserts that  the $SU(2)$ and $SO(3)$ quantum
representations of $\mathrm{Mod}(\Sigma_{g, n})$ send mapping classes with non-trivial  pseudo-Anosov pieces to elements of infinite order (for large enough level). The reader is referred
to Section \ref{secamu} for more details. Mapping classes are realized as monodromies of fibered 3-manifolds and, in particular, mapping classes of surfaces with boundary are realized as monodromies of complements of fibered links in 3-manifolds. It has been long known that
fibered links, and in particular hyperbolic fibered links,  exist in all orientable 3-manifolds with empty or toroidal boundary.
Here we  prove the following.

 \begin{teo} \label{moreAMU}  Let $M$ be the complement of a fundamental shadow link or the double of such a manifold. Given any link $L$ in $M$  there is an additional  knot $K\subset M$
  such that the complement  $M\setminus (K\cup L)$ fibers over $S^1$ with fiber a surface. Moreover, any monodromy of a fibration  of  $M\setminus (K\cup L)$ satisfies the AMU conjecture.
 \end{teo}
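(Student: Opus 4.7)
The plan is to combine Theorem \ref{main} with the principle, established in \cite{DKY, DK}, that if a fibered hyperbolic 3-manifold $N$ satisfies the Turaev--Viro volume conjecture (so that its TV invariants grow exponentially with rate $\vol(N) > 0$), then the monodromy of any fibration of $N$ satisfies the AMU conjecture. The task therefore reduces to producing $K$ such that $N := M \setminus (K \cup L)$ is simultaneously a fundamental shadow link complement and a fibered 3-manifold.

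For the topological construction of $K$, I would invoke the universality property (3) of fundamental shadow links applied to $M \setminus L$: there exists a fundamental shadow link complement $\widetilde{N} \subset \#^{c'+1}(S^1 \times S^2)$ from which $M \setminus L$ is obtained by Dehn filling along some slopes $(s_1, \dots, s_k)$. The cores of the corresponding filled solid tori, viewed inside $M \setminus L \subset M$, form a link; taking their union (and band-summing to a single component if one insists on a single knot) yields a candidate $K$, and by construction $M \setminus (K \cup L) \cong \widetilde{N}$. Exploiting the freedom in the choice of $\widetilde{N}$---one can always drill additional curves to enlarge the underlying fundamental shadow link---I would further arrange that $\widetilde{N}$ fibers over $S^1$, using Thurston's theory of fibered faces of the norm ball or a Stallings-type construction applied to a fibered surface that can be extended through the drillings.

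With $\widetilde{N}$ realized as a fibered fundamental shadow link complement, Theorem \ref{main} supplies
$$\lim_{r \to \infty} \frac{2\pi}{r} \log \abs{TV_r\!\left(\widetilde{N}, e^{\frac{2\pi i}{r}}\right)} = \vol(\widetilde{N}) > 0,$$
and the AMU criterion of \cite{DKY, DK} then yields the conclusion for any monodromy of any fibration of $\widetilde{N}$. When $M$ is instead the double of a fundamental shadow link complement, the same scheme applies verbatim, as the hypothesis on $\widetilde{N}$ is intrinsic to that manifold and not to the ambient $M$.

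The main obstacle is the simultaneous arrangement of the fundamental shadow link structure and the fiberedness of $\widetilde{N}$. One must verify that the universality-based drilling can be performed compatibly with a fibration---i.e.\ that the additional drilled curves can be chosen transverse to (or incorporated into the boundary of) a fiber surface---which will likely require a careful analysis using the polyhedral description of fundamental shadow links from \cite{CosThurston} together with standard tools for constructing fibered link complements.
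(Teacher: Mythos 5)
There is a genuine gap at the central step of your construction, and you acknowledge it yourself: you need the fundamental shadow link complement $\widetilde{N}$ supplied by universality to fiber over $S^1$ in a way compatible with the drilling, and you offer no argument for this beyond a gesture at fibered faces or a Stallings-type construction. Nothing guarantees that a fundamental shadow link complement filling back to $M\setminus L$ can be chosen fibered, and the further requirement that the cores of the filled tori form a \emph{single} knot $K$ in $M$ makes the approach over-constrained; your parenthetical fix of band-summing the cores destroys the identification $M\setminus(K\cup L)\cong\widetilde{N}$ on which the whole argument rests, since the complement of a band-sum is not the complement of the original cores. So as written the proposal does not produce the required $K$.

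The paper avoids this entirely by decoupling the two requirements, and this is the idea your proposal is missing. One does \emph{not} need $M\setminus(K\cup L)$ to be a fundamental shadow link complement: it suffices that $lTV\bigl(M\setminus(K\cup L)\bigr)>0$, and this is automatic for \emph{any} link drilled out of $M\in\mathcal{M}$, because $lTV(M)>0$ by Theorem \ref{expo} (via Theorem \ref{main}) and $lTV$ does not decrease under drilling, i.e.\ $lTV(N\setminus L)\geqslant lTV(N)$ by \cite[Theorem 5.3]{DK}. The fiberedness is then handled purely topologically: write $M$ as integral surgery on a link $L'\subset S^3$, use Stallings' theorem to choose a knot $K\subset S^3$ so that $K\cup L\cup L'$ is a fibered (homogeneous braid) link, exploiting the freedom in the linking numbers of $K$ with the components of $L\cup L'$ so that the framing induced on $L'$ by the fiber surface agrees with the surgery framing; the surgery then caps off the fiber along $L'$ and exhibits $M\setminus(K\cup L)$ as fibered, with $K$ a genuine knot. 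The AMU conclusion for every monodromy then follows from the criterion of \cite{DK:AMU} applied to the mapping torus $M\setminus(K\cup L)$. If you want to salvage your route, you would have to prove the compatibility of the fundamental shadow link structure with a fibration and resolve the single-knot issue, which is precisely the hard content your proposal defers.
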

 
 Theorem \ref{moreAMU} uses a result of Detcherry and Kalfagianni \cite{DK, DK:AMU}  which shows that monodromies of a fibered 3-manifold $M$
 satisfy the  AMU conjecture provided that $lTV(M)>0;$ that is, provided that the Turaev-Viro invariants grow exponentially with respect to  $r.$
In \cite{DK:AMU} the authors used the handful of examples of  link complements in $S^3$ with $lTV(S^3\setminus L)>0$ known at the time, to construct
  the first infinite families of examples that satisfy the  AMU conjecture in surfaces $\Sigma_{g,n}$ with $g\geqslant 2$ and $n \geqslant 2.$
Since the class of fundamental shadow links is universal, 
 Theorem \ref{moreAMU} provides an abundance of fibered 3-manifolds with monodromies satisfying the AMU conjecture. Explicit constructions of such manifolds are given in
 \cite{DK:cosets}.

The paper is organized as follows: We recall the quantum $6j$-symbols and preliminaries 
 about Turaev-Viro invariants in Section \ref{sec6j}. In Section \ref{secbound} we prove the upper bound given in Theorem \ref{prop:bound}; the proofs of the technical lemmas used are postponed to Section \ref{sectec}. In Section \ref{secvol} we introduce fundamental shadow links and prove Theorem \ref{main}. Applications of the main result on the AMU conjecture and the volume comparison are included in section \ref{FSL}. We also include a proof of Costantino's result, originally proved for the root    of unity $q=e^{\frac{\pi i}{r}}$ in \cite{C},  at a different root of unity $q=e^{\frac{2\pi i}{r}}$ in the Appendix.

All the 3-manifolds we will consider in this paper will be orientable with empty or toroidal boundary.

\textbf{Acknowledgements.} Belletti wishes to warmly thank his advisors, Francesco Costantino and Bruno Martelli, for their guidance and advice, and Roland van der Veen for helpful conversations. 
Part of the research of this paper was done while he participated at  the thematic semester ``Invariants in low-dimensional geometry and topology'' organized by the CIMI labex in Toulouse in 2017.
Detcherry, Kalfagianni and Yang made progress on this paper while attending the BANFF workshop ``Modular Forms and Quantum invariants'' in March 2018. The authors thank the organizers of these events for invitations and for arranging excellent working conditions.


\section{The quantum \texorpdfstring{$6j$}f-symbols}\label{sec6j}

In this section we give the basic definitions relating Turaev-Viro invariants and quantum $6j$-symbols. Throughout
the rest of the paper $r\geqslant 3$ is an odd integer and $q=e^{\frac{2\pi i}{r}}.$ The \emph{quantum integer}
$\{n\}$ is defined as $q^n-q^{-n},$ and the quantum factorial $\{n\}!$ is $\prod_{i=1}^n\{i\}.$ Furthermore,
we denote with $I_r$ the set $\{0,1,\dots,r-2\}.$

\begin{oss}
In the remainder of the paper, we deal with the $SU(2)$ version of the Turaev-Viro invariants; however, everything remains true for the $SO(3)$ version, with small modifications.
\end{oss}

\begin{dfn}\label{adm}
 We say that a triple $(a,b,c)$ of non-negative integers is \emph{$r$-admissible} if
 \begin{itemize}
  \item $a,b,c\leqslant r-2;$
  \item $a+b+c$ is even and $\leqslant 2r-4;$
  \item $a\leqslant b+c,$ $b\leqslant a+c$ and $c\leqslant a+b.$ 
 \end{itemize}
We say that a $6$-tuple $(n_1,n_2,n_3,n_4,n_5,n_6)$ is $r$-admissible if the $4$ triples $(n_1,n_2,n_3),$
$(n_1,n_5,n_6),$ $(n_2,n_4,n_6)$ and $(n_3,n_4,n_5)$ are $r$-admissible.
\end{dfn}

Notice that, while in part of the literature, e.g. \cite{TuraevViro} or \cite{Chen-Yang}, the colors are half integers, we take them to be integers. Our notation will be very similar to that of \cite{TuraevViro}, except for the integer colors, and the use of $\{n\}$ instead of $[n]:=\frac{\{n\}}{\{1\}}.$ This will account for an extra $\{1\}$ factor in some of our formulas. We follow closely the notation of \cite{DK}.

For an $r$-admissible triple $(a,b,c)$ we can define
\begin{equation*}
 \Delta(a,b,c)=\left(\sqrt{-1}\zeta_r\frac{\{\frac{a+b-c}{2}\}!\{\frac{a-b+c}{2}\}!\{\frac{-a+b+c}{2}\}!}{\{\frac{a+b+c}{2}+1\}!}\right)^{\frac{1}{2}}
\end{equation*}
where $\zeta_r=2 \sin\left(\frac{2 \pi}{r}\right)=-\sqrt{-1}\{1\}_{|q=\exp(2\pi \sqrt{-1}/r)}.$ Notice that the number inside the square root is real: each $\{n\}$ is a purely imaginary number, and all the $\sqrt{-1}$ simplify. By convention we take the positive square root of a positive number, and the square root with positive imaginary part of a negative number.

Moreover, for an $r$-admissible $6$-tuple $(n_1,n_2,n_3,n_4,n_5,n_6)$ we can define its quantum $6j$-symbol as

\begin{gather}
 \begin{vmatrix}
   n_1 &n_2&n_3\\
   n_4&n_5&n_6
  \end{vmatrix}=\zeta_r^{-1}\left(\sqrt{-1}\right)^\lambda\prod_{i=1}^4\Delta(v_i)
 \sum_{z=max (T_i)}^{min (Q_j)}\frac{(-1)^z\{z+1\}!}{\prod_{i=1}^4\{z-T_i\}!\prod_{j=1}^3\{Q_j-z\}!}\label{sixj}
\end{gather}
where:
\begin{itemize}
 \item $\lambda=\sum_{i=1}^6 n_i;$
 \item $v_1=(n_1,n_2,n_3),$ $v_2=(n_1,n_5,n_6),$ $v_3=(n_2,n_4,n_6)$ and $v_4=(n_3,n_4,n_5);$
 \item $T_1=\frac{n_1+n_2+n_3}{2},$ $T_2=\frac{n_1+n_5+n_6}{2},$ $T_3=\frac{n_2+n_4+n_6}{2}$ and $T_4=\frac{n_3+n_4+n_5}{2};$
 \item $Q_1=\frac{n_1+n_2+n_4+n_5}{2},$ $Q_2=\frac{n_1+n_3+n_4+n_6}{2}$ and $Q_3=\frac{n_2+n_3+n_5+n_6}{2}.$
\end{itemize}
\begin{oss}
 Notice that if $z\geqslant r-1,$ then the summand in \eqref{sixj} corresponding to $z$ is equal to $0.$
\end{oss}

\begin{dfn}

 An \emph{$r$-admissible coloring} for a tetrahedron $T$ is an assignment of an $r$-admissible $6$-tuple $(n_1,n_2,n_3,n_4,n_5,n_6)$ to the set of edges of $T,$ as
 shown in figure \ref{fig:colamm}. Similarly, we define an \emph{$r$-admissible coloring} of a triangulation of a $3$-manifold, as an assignment of elements of $I_r$ to each of its edges, in such a way
 that the $6$-tuple assigned to the edges of each tetrahedron is an $r$-admissible coloring.
\end{dfn}

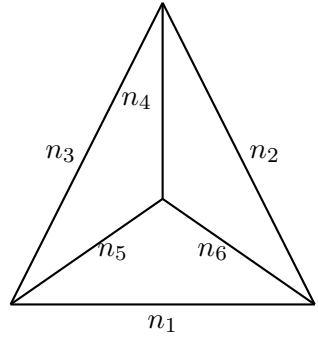
\begin{figure}
\centering
 \begin{tikzpicture}
 \draw [thick] (0,0)-- node[below] {$n_1$}(4,0);
 \draw [thick] (0,0)-- node[above, left] {$n_3$}(2,4);
 \draw [thick] (2,4)-- node[above,right] {$n_2$}(4,0);
 \draw [thick] (0,0)-- node[below,right] {$n_5$}(2,1.4);
 \draw [thick] (2,1.4)-- node[left] {$n_4$} (2,4);
 \draw [thick] (2,1.4)-- node [below,left] {$n_6$}(4,0);
\end{tikzpicture}
\caption{An admissible coloring for a tetrahedron}\label{fig:colamm}
\end{figure}

Let $M$ be an orientable compact $3$-manifold with a partially ideal triangulation $\tau.$ By this we mean
that some vertices of the triangulation are truncated, and the truncated faces are a triangulation for $\partial M.$

Denote with $A_r(\tau)$ the set of $r$-admissible colorings of $\tau,$ with $V$ the set of interior vertices of $\tau$ and with
$E$ the set of interior edges (by which we mean edges that are not contained in the boundary). If
$col\in A_r(\tau)$ and $T\in\tau$ we denote with $\lvert T\rvert_{col}$ the quantum $6j$-symbol corresponding to the $6$-tuple
that $col$ assigns to the edges of $T.$ Similarly, if $e\in E$ we define $$\lvert e\rvert_{col}=(-1)^{col(e)}\frac{\{col(e)+1\}}{\{1\}}.$$

Define the Turaev-Viro invariant of $M$ at level  $r$ in the root $q$ as
\begin{displaymath}
 TV_r(M,\tau,q):=\left(\frac{\sqrt{2}\sin\left(\frac{2\pi}{r}\right)}{\sqrt{r}}\right)^{2\lvert V\rvert}\sum_{col\in A_r(\tau)}\prod_{e\in E}\lvert e\rvert_{col}\prod_{T\in \tau}\lvert T\rvert_{col}.
\end{displaymath}

 By \cite{TuraevViro} if  $\tau$ and $\tau'$ are two partially ideal triangulations of $M,$ then $TV_r(M,\tau,q)=TV_r(M,\tau',q).$
Hence we have a topological invariant of $M,$ denoted by  $TV_r(M,\ q),$ depending on $r$ and $q.$


\section{The upper bound of the quantum \texorpdfstring{$6j$}f-symbol}\label{secbound}

In this section  and the next section we complete the proof of Theorem \ref{prop:bound}. In this section we give the proof assuming three technical lemmas the proofs of which  occupy Section \ref{sectec}.

Denote with $\Lambda(x)$ the Lobachevski function, defined as

\begin{displaymath}
 \Lambda(x):=-\int_0^x \log \lvert 2\sin(t)\rvert dt.
\end{displaymath}
It is $\pi$-periodic, odd, and real analytic outside of $\{k\pi, k\in \Z\}.$

 The tool used to estimate the quantum $6j$-symbol is the following lemma, first appeared in \cite[Proposition 8.2]{gale} for $q=e^{\frac{i\pi}{r}},$ and then in the other roots of unity in \cite[Proposition 4.1]{DK}.

\begin{lem}\label{stima}
 For any integer $0<n<r$ and at $q=e^{\frac{2\pi i}{r}},$
 \begin{displaymath}
  \log\left|\{n\}!\right|=-\frac{r}{2\pi}\Lambda\left(\frac{2n\pi}{r}\right)+O\left(\log(r)\right),
 \end{displaymath}
where the term $O(\log(r))$ is such that there exist constants $C,r_0$ independent of $n$ and $r$ such that 
$O(\log(r))\leqslant C \log(r)$ whenever $r>r_0.$
\end{lem}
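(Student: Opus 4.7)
The plan is to interpret $\log|\{n\}!|$ as a Riemann sum for the integral defining the Lobachevski function, and then to bound the error in the Riemann approximation, the main difficulty being the logarithmic singularities of the integrand.

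First I would compute that at $q = e^{2\pi i/r}$ one has $\{k\} = q^k - q^{-k} = 2i\sin(2\pi k/r)$, hence $|\{k\}| = |2\sin(2\pi k/r)|$. Summing,
\[
\log|\{n\}!| \;=\; \sum_{k=1}^{n} \log\bigl|2\sin(2\pi k/r)\bigr|.
\]
On the other hand, substituting $t = 2\pi s/r$ in the definition of $\Lambda$ gives
\[
-\frac{r}{2\pi}\Lambda\!\left(\frac{2\pi n}{r}\right) \;=\; \int_0^{n} \log\bigl|2\sin(2\pi s/r)\bigr|\,ds,
\]
so the task reduces to showing
\[
\sum_{k=1}^{n}\log\bigl|2\sin(2\pi k/r)\bigr| \;=\; \int_0^{n}\log\bigl|2\sin(2\pi s/r)\bigr|\,ds + O(\log r),
\]
with the implicit constant independent of $n \in \{1,\dots,r-1\}$.

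Next I would split the interval $[0,n]$ into a smooth regime and singular regimes. The integrand $f(s) := \log|2\sin(2\pi s/r)|$, viewed as a function of $s \in [0,r]$, has only two singularities, at $s=0$ and $s=r/2$. Away from any fixed neighborhood of these two points, $f$ is smooth and its derivative is bounded by $O(1/r)$ times a smooth factor, so a standard comparison of a Riemann sum with its integral (e.g.\ Euler--Maclaurin with the trapezoidal rule) yields an $O(1)$ error over the smooth region. The interesting regions are small windows $[0,K]$ and, if $n > r/2$, $[r/2 - K, r/2 + K]$ for some fixed integer $K$ (say $K = 10$, or any constant).

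Inside the window near $s = 0$, one has $\log|2\sin(2\pi s/r)| = \log(2\pi s/r) + O(s/r)$ for $s \leq K$. Summing gives $\sum_{k=1}^{K}\log(2\pi k/r) + O(1) = K\log(2\pi/r) + \log(K!)$, while integrating gives $K\log(2\pi K/r) - K + O(1)$; Stirling's formula $\log(K!) = K\log K - K + O(\log K)$ makes these match up to $O(\log K)$. The same analysis handles the window around $s = r/2$ by the change of variable $s \mapsto r/2 - s$ (using that $|\sin(2\pi s/r)| = |\sin(2\pi(r/2 - s)/r)|$). Since the argument only requires a \emph{fixed} window, not a shrinking one, all errors here are $O(1)$, independently of $r$ and $n$.

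The main obstacle I expect is arranging the estimates so that the constant in $O(\log r)$ is genuinely independent of $n$ in \emph{all} three regimes (small $n$, $n$ near $r/2$, and $n$ close to $r-1$), and so that one does not have to treat separately the cases where $n$ itself lies inside one of the singular windows. The cleanest way is to prove the error bound first under the assumption that $n$ lies outside the windows, and then to absorb the at most $O(K) = O(1)$ additional terms corresponding to $k$'s inside a window into the $O(\log r)$ error directly. This produces the uniform estimate claimed.
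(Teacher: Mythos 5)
The paper does not prove this lemma itself; it cites it from \cite[Proposition 8.2]{gale} (for $q=e^{i\pi/r}$) and \cite[Proposition 4.1]{DK} (for the root of unity used here). Your Riemann-sum / Euler--Maclaurin strategy is the standard one used in those references, so you are on the right track, and your set-up (rewriting $\log|\{n\}!|$ as $\sum_{k=1}^n\log|2\sin(2\pi k/r)|$ and $-\tfrac{r}{2\pi}\Lambda(2\pi n/r)$ as the corresponding integral) is correct.

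There is one incorrect intermediate claim. You assert that, away from the fixed windows around $s=0$ and $s=r/2$, the comparison of the sum with the integral has only an $O(1)$ error. That is not so. On a smooth interval $[K,m]$ the first-order Euler--Maclaurin formula gives
\[
\sum_{k=K}^{m} f(k) - \int_K^m f(s)\,ds \;=\; \frac{f(K)+f(m)}{2} + \int_K^m P_1(s)\, f'(s)\,ds,
\]
where $P_1$ is the periodicized first Bernoulli polynomial. The first- and second-order contributions are indeed $O(1)$: after one more integration by parts, the endpoint values of $f'$ are $O(1/K)$ and $\int_K^m|f''|\,ds = O(1)$. But the zeroth-order boundary term is not small: $f(K)=\log\bigl|2\sin(2\pi K/r)\bigr| = -\log r + O(1)$, so $\tfrac12 f(K)$ alone is of size $\tfrac12\log r$, and $f(m)$ need not cancel it (it does so only when $m$ is near $r/2$). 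The smooth-region discrepancy is therefore genuinely of order $\log r$, not $O(1)$. This does not sink your proof --- $O(\log r)$ is exactly what the lemma asks for, so your conclusion stands --- but the intermediate assertion should be corrected, and the ``derivative is $O(1/r)$ times a smooth factor'' phrasing is misleading since that factor grows like $r$ near the window edges. A secondary, purely cosmetic, remark: the substitution $s\mapsto r/2 - s$ sends integer lattice points to half-integers, so the window at $r/2$ is not literally ``the same analysis'' as the window at $0$; one needs the parallel Stirling-type estimate $\sum_{m=1}^{K}\log(m-\tfrac12) = K\log K - K + O(1)$, but that is routine.
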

\vskip 0.03in

\begin{oss}
 If $0<n<r-1,$ we can equally well use the estimate
 \begin{displaymath}
  \log\left|\{n+1\}!\right|=-\frac{r}{2\pi}\Lambda\left(\frac{2n\pi}{r}\right)+O(\log(r)),
 \end{displaymath}
since by applying a Taylor expansion to $\Lambda$ we find
\begin{align*}
 \Lambda\left(\frac{2n\pi}{r}+\frac{2\pi}{r}\right)-\Lambda\left(\frac{2n\pi}{r}\right)=&\frac{2\pi}{r}\Lambda'\left(\frac{2n\pi}{r}\right)+o\left(\frac{1}{r}\right)=\\=&-\frac{2\pi}{r}\log\left\rvert 2\sin\frac{2n\pi}{r}\right\lvert+o\left(\frac{1}{r}\right)= O\left(\frac{\log\left(r\right)}{r}\right)
\end{align*}
since $\left\lvert\sin\left(\frac{2\pi n}{r}\right)\right\rvert\geqslant \frac{\pi}{r}$ (because $2n\neq r$), and thus $-\frac{2\pi}{r}\log(\rvert 2\sin\frac{2n\pi}{r}\lvert)\leqslant \frac{\log(r)}{r},$ since $\log(ax)\leqslant a\log(x).$ Notice again that 
the constants involved in the $O\left(\frac{\log(r)}{r}\right)$ are independent of $n$ and $r.$
\end{oss}

We need the following  lemma.

\begin{lem}[\cite{DKY}, Lemma A.3]\label{sym}
For $i\in \{0,\dots,r-2\},$ let $i'=r-2-i.$ Then for any admissible $6$-tuple $(n_1,n_2,n_3,n_4,n_5,n_6),$ the $6$-tuples $(n_1,n_2,n_3,n_4',n_5',n_6')$ and $(n_1',n_2',n_3,n_4',n_5',n_6)$ are admissible and at $q=e^{\frac{2\pi i}{r}},$ 
\begin{equation*}
\begin{vmatrix}
   n_1 &n_2&n_3\\
   n_4&n_5&n_6
  \end{vmatrix}
=\begin{vmatrix}
   n_1 &n_2&n_3\\
   n_4'&n_5'&n_6'
  \end{vmatrix}
=\begin{vmatrix}
   n_1' &n_2'&n_3\\
   n_4'&n_5'&n_6
  \end{vmatrix}.\end{equation*}
\end{lem}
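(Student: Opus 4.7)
The plan is to prove the first equality directly from the Racah formula \eqref{sixj}, and then to deduce the second equality by chaining with a standard tetrahedral symmetry of the $6j$-symbol. Admissibility of the new $6$-tuples is immediate from the definition: if $(a,b,c)$ is $r$-admissible and we set $b' = r-2-b$, $c' = r-2-c$, then $a + b' + c'$ has the same parity as $a + b + c$, the bound $a + b' + c' \leqslant 2r-4$ is equivalent to the triangle inequality $a \leqslant b + c$, and the remaining triangle inequalities for $(a, b', c')$ translate into those for $(a, b, c)$.

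For the first equality, I would exploit the fact that at $q = e^{2\pi i/r}$ one has $q^r = 1$, hence $\{r-k\} = -\{k\}$. Writing $\{r-1\}! = \{k\}! \cdot \{k+1\}\cdots\{r-1\}$ and pairing $\{k+j\}$ with $\{r-k-j\} = -\{k+j\}$ gives, for $r$ odd, the factorial inversion
$$\{r-1-k\}! = (-1)^k\,\frac{\{r-1\}!}{\{k\}!}.$$
I would then apply this inversion to every quantum factorial in \eqref{sixj} whose argument is shifted by $r-2$ under the substitution $(n_4, n_5, n_6) \mapsto (n_4', n_5', n_6')$: namely to the "large" factorials in each of the three modified $\Delta$-factors $\Delta(n_1, n_5', n_6')$, $\Delta(n_2, n_4', n_6')$, $\Delta(n_3, n_4', n_5')$, and, after an affine change of summation variable identifying the new range $[\max \tilde T_i, \min \tilde Q_j]$ with the original $[\max T_i, \min Q_j]$, to the factorials $\{z+1\}!$, $\{z - \tilde T_i\}!$, $\{\tilde Q_j - z\}!$ in the summand. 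The modified sum then matches the original term by term, up to an overall phase. The hard part will be verifying that this phase equals $1$: one must track the $(-1)^k$ factors produced by each factorial inversion, the signs $\{r-k\} = -\{k\}$ arising in individual quantum-integer factors inside the $\Delta$'s, and the shift in the prefactor $(\sqrt{-1})^\lambda$, and show that, thanks to $r$ being odd and the parity constraints built into admissibility (each face-sum $n_i + n_j + n_k$ is even), all of these signs cancel.

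Once the first equality is in hand, the second follows by chaining it with the tetrahedral row-swap symmetry
$$\begin{vmatrix} a & b & c \\ d & e & f \end{vmatrix} = \begin{vmatrix} d & e & c \\ a & b & f \end{vmatrix}$$
(one of the $24$ standard symmetries of the $6j$-symbol, obtained by swapping rows in the first two columns). Explicitly,
\begin{align*}
\begin{vmatrix} n_1 & n_2 & n_3 \\ n_4 & n_5 & n_6 \end{vmatrix}
&= \begin{vmatrix} n_1 & n_2 & n_3 \\ n_4' & n_5' & n_6' \end{vmatrix}
= \begin{vmatrix} n_4' & n_5' & n_3 \\ n_1 & n_2 & n_6' \end{vmatrix} \\
&= \begin{vmatrix} n_4' & n_5' & n_3 \\ n_1' & n_2' & n_6 \end{vmatrix}
= \begin{vmatrix} n_1' & n_2' & n_3 \\ n_4' & n_5' & n_6 \end{vmatrix},
\end{align*}
where the first and third equalities apply the first identity (the third using $(n_6')' = n_6$) and the second and fourth apply the row-swap symmetry. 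The admissibility checks needed for the intermediate $6$-tuples reduce to the admissibility checks already performed in the first paragraph.
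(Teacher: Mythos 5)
Your proposal is being compared against a proof the paper never actually gives: Lemma \ref{sym} is simply quoted from \cite{DKY} (Lemma A.3), so what matters is whether your direct computation from formula \eqref{sixj} closes. Most of it does, and it is the natural route. The admissibility transfer is right. The inversion $\{r-1-k\}!=(-1)^k\{r-1\}!/\{k\}!$ is correct precisely because $r$ is odd. The affine substitution you postulate does exist and can be made explicit: with $\tilde T_i,\tilde Q_j$ the data of $(n_1,n_2,n_3,n_4',n_5',n_6')$, the change of variable $z=(r-2)+T_1-w$ gives $z-\tilde T_1=r-2-w$, $z-\tilde T_2=Q_3-w$, $z-\tilde T_3=Q_2-w$, $z-\tilde T_4=Q_1-w$, $\tilde Q_1-z=w-T_4$, $\tilde Q_2-z=w-T_3$, $\tilde Q_3-z=w-T_2$, and after inverting only $\{z+1\}!$ and $\{z-\tilde T_1\}!$ the two copies of $\{r-1\}!$ cancel and the new summand at $z$ equals the old summand at $w$ \emph{exactly}, with no residual sign (again using $r$ odd). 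The second equality via the relabeling $(n_1,\dots,n_6)\mapsto(n_4,n_5,n_3,n_1,n_2,n_6)$ is immediate, since \eqref{sixj} depends only on the sets $\{T_i\}$, $\{Q_j\}$ and on $\lambda$, all invariant under it.

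The genuine gap is the step you explicitly defer, and the reason you give for why it will close (``$r$ odd and the parity constraints'') is not sufficient as stated. Since the sums match term by term, the whole lemma reduces to showing $(\sqrt{-1})^{\lambda'-\lambda}\prod_i\Delta(\tilde v_i)=\prod_i\Delta(v_i)$, with $\lambda'-\lambda=3(r-2)-2(n_4+n_5+n_6)$. A short computation gives $\Delta(n_1,n_5',n_6')^2=(-1)^{n_1+1}\Delta(n_1,n_5,n_6)^2$ and similarly for the other two modified faces, so parity of the face sums plus oddness of $r$ only yield that the total ratio of the two $6j$-symbols squares to $1$, i.e.\ equals $\pm 1$; ruling out $-1$ is the crux. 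Here the square-root convention in the definition of $\Delta$ bites: when the relevant color is odd the radicand is unchanged and $\Delta(\tilde v_i)=\Delta(v_i)$, but when it is even the radicand flips sign and $\Delta(\tilde v_i)=\pm\sqrt{-1}\,\Delta(v_i)$, with a sign governed by whether the original radicand was positive or negative — and that depends on how many of the quantities $\frac{a+b-c}{2},\frac{a-b+c}{2},\frac{-a+b+c}{2},\frac{a+b+c}{2}+1$ lie above $r/2$, not on parities. So the final phase is not a pure parity count: you must either track these radicand signs (a case analysis involving the colors' positions relative to $(r-2)/2$ and $r\bmod 4$, the latter entering through $(\sqrt{-1})^{3(r-2)}$), or replace this step by an independent argument (for instance the symmetry principle for the color $r-2$ in the $SO(3)$ theory, which is essentially how this identity is handled in the literature). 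Until that sign is pinned to $+1$, the proof is incomplete, even though the strategy itself is sound and, once this bookkeeping is done, does prove the lemma.
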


We are now ready to begin the proof of Theorem \ref{prop:bound}
stated in the Introduction. The proof of the theorem will be completed in Section \ref{sectec}.

\begin{prop:bound}
For any $r,$ and any admissible $6$-tuple $(n_1,n_2,n_3,n_4,n_5,n_6),$ then 
$$ \frac{2 \pi}{r}\log\abs{\Bigg|\begin{matrix}
   n_1 &n_2&n_3\\
   n_4&n_5&n_6
  \end{matrix}\Bigg|_{q=e^{\frac{2\pi i}{r}}}}\leqslant v_8+O\left(\frac{\log(r)}{r}\right)$$
  where $v_8\cong 3.66$ is the volume of the regular ideal hyperbolic octahedron.
\end{prop:bound}

\begin{proof}

Applying Lemma \ref{stima} (together with the subsequent remark) to the formula for the quantum $6j$-symbol \eqref{sixj} we obtain the estimate

\begin{gather}\label{estimate}
  \frac{2 \pi}{r}\log\abs{\Bigg|\begin{matrix}
   n_1 &n_2&n_3\\
   n_4&n_5&n_6
  \end{matrix}\Bigg|_{q=e^{\frac{2\pi i}{r}}}}\leqslant V(\theta_1,\theta_2,\theta_3,\theta_4,\theta_5,\theta_6)+O\left(\frac{\log(r)}{r}\right)
\end{gather}

where

\begin{gather}\begin{split}V(\theta_1,\theta_2,\theta_3,\theta_4,\theta_5,\theta_6):=\max_{\max (U_i)\leqslant Z \leqslant \min (V_j,2\pi)}F(Z,\theta_1,\theta_2,\theta_3,\theta_4,\theta_5,\theta_6)+\\ \nu(\theta_1,\theta_2,\theta_3)+
  \nu(\theta_1,\theta_5,\theta_6)+\nu(\theta_2,\theta_4,\theta_6)+\nu(\theta_3,\theta_4,\theta_5)\end{split}\end{gather} and we have
\begin{itemize}
 \item $\theta_i=\frac{2 \pi n_i}{r}$ and $Z=\frac{2 \pi z}{r};$
 \item $U_i=\frac{2 \pi T_i}{r}$ and similarly $V_j=\frac{2 \pi Q_j}{r};$

 \item $\nu(\alpha,\beta,\gamma)=\frac{1}{2}(\Lambda(\frac{\alpha+\beta+\gamma}{2})-\Lambda(\frac{\alpha+\beta-\gamma}{2})-\Lambda(\frac{\alpha-\beta+\gamma}{2})-\Lambda(\frac{-\alpha+\beta+\gamma}{2}));$
\item $F(Z,\theta_1,\theta_2,\theta_3,\theta_4,\theta_5,\theta_6)=\sum_{i=1}^4\Lambda(Z-U_i)+\sum_{j=1}^3\Lambda(V_j-Z)-\Lambda(Z).$
 \end{itemize}

The admissibility conditions imply that all the variables involved in above formulae take values between $0$ and $2\pi.$ 
Notice that the variables  $\theta_i$ satisfy similar triangular inequalities and admissibility conditions as the variables  $n_i.$
In particular $\theta_1+\theta_2+\theta_3\leqslant 4\pi,$ $\theta_1+\theta_5+\theta_6\leqslant 4\pi,$ $\theta_2+\theta_4+\theta_6\leqslant 4\pi$ and $\theta_3+\theta_4+\theta_5\leqslant4\pi.$

Next we  want to maximize $V$ subject to the admissibility conditions of the variables  $\theta_i.$ The argument relies   the three technical lemmas, whose proofs will occupy Section \ref{sectec}.
The first two lemmas are the following.
\begin{lem}\label{stima1}
 If 
 $0\leqslant\alpha,\beta,\gamma\leqslant \pi,$
then $\nu(\alpha,\beta,\gamma)\leqslant 0.$
\end{lem}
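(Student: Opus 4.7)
The plan is to fix $\beta,\gamma\in(0,\pi)$ and treat $\nu(\cdot,\beta,\gamma)$ as a function of the single variable $\alpha\in[0,\pi]$, showing that it vanishes at both endpoints and is unimodal on $[0,\pi]$ with a single interior minimum, from which $\nu\leqslant 0$ follows. The remaining degenerate cases where $\beta$ or $\gamma$ equals $0$ or $\pi$ will be handled separately, by direct inspection showing that $\nu$ vanishes identically.

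First, I would verify that $\nu(0,\beta,\gamma)=\nu(\pi,\beta,\gamma)=0$. The identity at $\alpha=0$ is immediate from the oddness of $\Lambda$: the four terms defining $\nu$ pair up and cancel. The identity at $\alpha=\pi$ uses the $\pi$-periodicity of $\Lambda$ together with its oddness through the relation $\Lambda(\frac{\pi}{2}+u)=\Lambda(u-\frac{\pi}{2})=-\Lambda(\frac{\pi}{2}-u)$, which again causes the four terms to cancel in pairs. The same computation handles the boundary faces $\beta\in\{0,\pi\}$ and $\gamma\in\{0,\pi\}$, by the symmetry of $\nu$ in its three arguments.

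Next, using $\Lambda'(t)=-\log|2\sin t|$, I would compute
$$\frac{\partial\nu}{\partial\alpha}=-\frac{1}{4}\log\left|\frac{\sin\frac{\alpha+\beta+\gamma}{2}\sin\frac{-\alpha+\beta+\gamma}{2}}{\sin\frac{\alpha+\beta-\gamma}{2}\sin\frac{\alpha-\beta+\gamma}{2}}\right|,$$
and apply the product-to-sum identity $2\sin A\sin B=\cos(A-B)-\cos(A+B)$ to both the numerator and denominator to arrive at the cleaner form
$$\frac{\partial\nu}{\partial\alpha}=-\frac{1}{4}\log\left|\frac{\cos\alpha-\cos(\beta+\gamma)}{\cos(\beta-\gamma)-\cos\alpha}\right|.$$
Squaring the ratio, expanding, and using $\cos(\beta\mp\gamma)=\cos\beta\cos\gamma\pm\sin\beta\sin\gamma$ shows that $\partial_\alpha\nu>0$ if and only if $\sin\beta\sin\gamma(\cos\beta\cos\gamma-\cos\alpha)>0$. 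Since $\sin\beta\sin\gamma>0$ on $(0,\pi)^2$, this is equivalent to $\alpha>\alpha^{*}:=\arccos(\cos\beta\cos\gamma)\in(0,\pi)$. Thus $\nu(\cdot,\beta,\gamma)$ decreases on $(0,\alpha^{*})$ and increases on $(\alpha^{*},\pi)$, and combined with the vanishing at the endpoints this gives $\nu\leqslant 0$ throughout $[0,\pi]$.

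The main technical point to address is that $\partial_\alpha\nu$ has logarithmic singularities at $\alpha=|\beta-\gamma|$ and at $\alpha=\beta+\gamma$ (or $\alpha=2\pi-(\beta+\gamma)$ if $\beta+\gamma>\pi$), coming from the arguments of $\Lambda$ landing on multiples of $\pi$. These singularities are integrable, so $\nu$ itself remains continuous across them; moreover, one checks that the singularity at $\alpha=|\beta-\gamma|$ lies strictly inside the region $\alpha<\alpha^{*}$ where $\partial_\alpha\nu$ is already negative, while the other singularity lies inside $\alpha>\alpha^{*}$ where $\partial_\alpha\nu$ is positive, so the unimodal picture is preserved. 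This is the only place where a careful case analysis is needed.
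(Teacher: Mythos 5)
Your argument is correct, and it takes a genuinely different route from the paper. The paper substitutes $x=\frac{\alpha+\beta-\gamma}{2}$, $y=\frac{\alpha-\beta+\gamma}{2}$, $z=\frac{-\alpha+\beta+\gamma}{2}$ and runs a three-variable maximization of $\vartheta(x,y,z)=\frac{1}{2}(\Lambda(x+y+z)-\Lambda(x)-\Lambda(y)-\Lambda(z))$ over $\{0\leqslant x+y,\,x+z,\,y+z\leqslant\pi\}$: it shows $\vartheta=0$ on the boundary, finds the unique interior stationary point $x=y=z=\frac{\pi}{4}$ with value $-2\Lambda(\frac{\pi}{4})<0$, and rules out maxima at non-smooth points by exhibiting directions of infinite derivative. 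You instead freeze $\beta,\gamma$ and study the one-variable function $\alpha\mapsto\nu(\alpha,\beta,\gamma)$: the endpoint identities $\nu(0,\beta,\gamma)=\nu(\pi,\beta,\gamma)=0$ follow from oddness and $\pi$-periodicity of $\Lambda$ exactly as in the paper's boundary computation, and your product-to-sum reduction of $\partial_\alpha\nu$ to the sign of $\cos\beta\cos\gamma-\cos\alpha$ is correct (squaring the ratio gives $4\sin\beta\sin\gamma(\cos\alpha-\cos\beta\cos\gamma)$ as the difference of squares), so $\nu$ decreases up to $\alpha^{*}=\arccos(\cos\beta\cos\gamma)$ and increases afterwards; your placement of the two logarithmic singularities strictly inside the two monotonicity intervals (since $\cos|\beta-\gamma|>\cos\alpha^{*}>\cos(\beta+\gamma)$) is also right, and continuity of $\nu$ lets you conclude. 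What each approach buys: yours is more elementary, produces the explicit minimizer $\alpha^{*}$ (note $\alpha^{*}=\frac{\pi}{2}$ when $\beta=\gamma=\frac{\pi}{2}$, matching the paper's critical point) and handles the non-smooth locus more transparently, since the singular points never compete for the maximum; the paper's multivariable critical-point analysis, while heavier here, sets up exactly the machinery (symmetric change of variables, boundary/non-smooth/interior trichotomy, Lemma \ref{conv}) that is reused in the harder Lemma \ref{stima3}, which is why it is organized that way.
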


\begin{lem}\label{stima3}
 If $0\leqslant\theta_1,\theta_2,\theta_3,\theta_4,\theta_5,\theta_6\leqslant 2\pi$ and $max(T_i)\leqslant Z\leqslant min(Q_j,2\pi),$ then 
 \begin{displaymath}
  F(Z,\theta_1,\theta_2,\theta_3,\theta_4,\theta_5,\theta_6)+2\nu(\theta_1,\theta_2,\theta_3)\leqslant 8\Lambda\left(\frac{\pi}{4}\right)=v_8
 \end{displaymath}

\end{lem}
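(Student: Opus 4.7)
The plan is to treat $G(\theta_1,\ldots,\theta_6,Z) := F(Z,\theta_1,\ldots,\theta_6) + 2\nu(\theta_1,\theta_2,\theta_3)$ as a continuous function on the compact convex polytope $\Omega \subset [0,2\pi]^7$ cut out by the admissibility bounds on the $\theta_i$ together with $\max_i U_i \leq Z \leq \min(\min_j V_j,\,2\pi)$, and to bound the maximum $M := \max_\Omega G$ by $v_8 = 8\Lambda(\pi/4)$ by separately analyzing interior critical points and boundary facets.

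At an interior critical point, using the identity $\Lambda'(x) = -\log|2\sin x|$, the seven stationarity conditions $\partial G/\partial Z = 0$ and $\partial G/\partial\theta_k = 0$ for $k = 1,\ldots,6$ translate into seven trigonometric equations, each of the form $\prod_\alpha |2\sin\alpha| = \prod_\beta |2\sin\beta|$, where $\alpha,\beta$ are specific linear combinations of the variables. The three equations indexed by $k \in \{4,5,6\}$ are structurally simpler, since the derivative of $\nu(\theta_1,\theta_2,\theta_3)$ does not enter; these, together with the $Z$-equation, should pin down strong symmetric relations between $Z$ and $(\theta_4,\theta_5,\theta_6)$ and reduce the optimization to a low-dimensional family of candidate configurations. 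Substituting these back into $G$ and applying the Lobachevsky duplication identity $\Lambda(2x) = 2\Lambda(x) - 2\Lambda(\pi/2 - x)$ together with $\Lambda(\pi - x) = -\Lambda(x)$, I expect the critical value to collapse to an expression manifestly bounded by $8\Lambda(\pi/4)$.

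On the boundary of $\Omega$ I analyze each facet separately. If $Z = \max_i U_i$ or $Z = \min_j V_j$, one argument of $\Lambda$ vanishes and the corresponding term drops out, lowering the effective dimension of the optimization; if $\theta_k \in \{0,2\pi\}$, several triangle inequalities collapse to equalities, forcing coincidences among the $U_i$'s and $V_j$'s. In each case $G$ simplifies to a functional on a lower-dimensional polytope, and I would iterate the critical-point analysis or appeal to induction on dimension to conclude the bound $\leq v_8$ in the reduced setting.

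The chief difficulty is that $2\nu(\theta_1,\theta_2,\theta_3)$ breaks the natural permutation symmetry on the four vertex-triples that the full volume functional $V = F + \sum_i \nu_i$ enjoys, so one cannot simply assume the maximum is attained at a symmetric configuration. To compensate, I plan to use Lemma \ref{stima1} inside the subregion $\theta_i \in [0,\pi]$, where $\nu(\theta_1,\theta_2,\theta_3) \leq 0$ and hence $G \leq F$, reducing that case to an analogous bound on $F$ alone; outside this subregion, the substitutions $\theta_i \mapsto 2\pi - \theta_i$ informed by Lemma \ref{sym} should transport the problem back into the tractable subregion before the critical-point analysis is carried out.
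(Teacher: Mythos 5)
Your overall outline (classify interior critical points, then analyze boundary and non-smooth strata) is in the same spirit as the paper's argument, but the one concrete reduction you commit to is false, and your whole plan leans on it. You propose that on the subregion where all $\theta_i\in[0,\pi]$ one can invoke Lemma \ref{stima1} to discard $2\nu(\theta_1,\theta_2,\theta_3)\le 0$ and then prove the ``analogous bound'' $F\le v_8$ for $F$ alone. That bound does not hold: take $\theta_1=\dots=\theta_6=\pi/2$ and $Z=7\pi/8$ (admissible, since $\max U_i=3\pi/4\le Z\le \min V_j=\pi$); then every $Z-U_i$ and every $V_j-Z$ equals $\pi/8$, so $F=7\Lambda(\pi/8)-\Lambda(7\pi/8)=8\Lambda(\pi/8)\approx 3.92>v_8\approx 3.66$. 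The negative term $2\nu$ is precisely what compensates this excess (here $2\nu(\pi/2,\pi/2,\pi/2)=-4\Lambda(\pi/4)$, bringing the total back below $v_8$), so it cannot be dropped; in the paper Lemma \ref{stima1} is combined with Lemma \ref{stima3} only afterwards, by averaging in Corollary \ref{stimacor}, not used to strip $\nu$ out of Lemma \ref{stima3} itself. Similarly, the appeal to Lemma \ref{sym} to push the cases with some $\theta_i>\pi$ back into $[0,\pi]$ is not available inside this lemma: that symmetry concerns the $6j$-symbol (and its analytic counterpart is Lemma \ref{symm}, which shifts all six angles at once), whereas here the hypothesis is simply $0\le\theta_i\le 2\pi$ and the inequality must be established on the whole region.

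Beyond this, the remainder is a plan rather than a proof: the critical-point equations are not solved, and the loci where $\Lambda'$ blows up (arguments of $\Lambda$ in $\pi\Z$) are not actually treated. The paper's proof gets its traction from a device you do not have: the change of variables $a_i=Z-U_i$, $b_j=V_j-Z$, under which $F+2\nu$ becomes a function $L$ that is $\pi$-periodic in each of the seven variables and symmetric under $a_i\leftrightarrow b_i$ ($i\ne 4$) and simultaneous permutations of the indices $1,2,3$, so the domain may be taken to be $0\le a_i,b_j\le\pi$ with $\sum a_i+\sum b_j\le 2\pi$. On boundary and non-smooth strata, maxima are excluded by an infinite directional derivative of $\Lambda$ except in degenerate configurations where $L$ collapses to at most three blocks of the form $\Lambda(a)+\Lambda(b)-\Lambda(a+b)$, each bounded by $v_3$ via Lemma \ref{conv}, giving at most $3v_3<v_8$; the interior critical points are then computed to be $(k\pi/8,\dots,k\pi/8)$, $k=1,2$, with maximal value $v_8$. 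To salvage your approach you must keep $2\nu$ in the functional throughout and supply substitutes for these three ingredients (periodicity/symmetry of the combined functional, a bound such as Lemma \ref{conv} for the degenerate strata, and an explicit treatment of the non-smooth points).
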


We obtain the following corollary.

\begin{cor}\label{stimacor}We have
 $$\max_{\max (U_i)\leqslant Z \leqslant \min (V_j,2\pi)}F(Z,\theta_1,\theta_2,\theta_3,\theta_4,\theta_5,\theta_6)+ \nu(\theta_1,\theta_2,\theta_3)+
  \nu(\theta_1,\theta_5,\theta_6)\leqslant v_8.$$
\end{cor}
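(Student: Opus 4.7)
The plan is to deduce Corollary \ref{stimacor} from Lemma \ref{stima3} by exploiting the natural symmetry of $F$ under permutations of the tetrahedron. The key observation is that the expression
$$F(Z, \theta_1, \ldots, \theta_6) = \sum_{i=1}^4 \Lambda(Z - U_i) + \sum_{j=1}^3 \Lambda(V_j - Z) - \Lambda(Z)$$
depends on $\theta_1,\ldots,\theta_6$ only through the unordered sets $\{U_1,\ldots,U_4\}$ and $\{V_1,V_2,V_3\}$, which are precisely the four vertex-sums and three opposite-edge-sums of the tetrahedron whose edges carry the colors $\theta_i$. Consequently, any relabeling of the $\theta_i$'s coming from a symmetry of the tetrahedron leaves $F$ unchanged and merely permutes the quadruple of triples $\bigl\{(\theta_1,\theta_2,\theta_3), (\theta_1,\theta_5,\theta_6), (\theta_2,\theta_4,\theta_6), (\theta_3,\theta_4,\theta_5)\bigr\}$.

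First I would write down explicitly the involution $\sigma$ given by $\theta_2 \leftrightarrow \theta_5$ and $\theta_3 \leftrightarrow \theta_6$. A direct check shows that $\sigma$ swaps $U_1 \leftrightarrow U_2$ and $U_3 \leftrightarrow U_4$ while fixing each $V_j$, so it preserves both $F$ and the range $[\max U_i, \min(V_j, 2\pi)]$ for $Z$. It also carries the triple $(\theta_1,\theta_2,\theta_3)$ to $(\theta_1,\theta_5,\theta_6)$, and it preserves admissibility since the four triangles are merely permuted.

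Applying Lemma \ref{stima3} to the 6-tuple $\sigma(\theta_1,\ldots,\theta_6)$ and using the invariance $F \circ \sigma = F$ yields, for every $Z$ in the allowed range,
$$F(Z, \theta_1, \ldots, \theta_6) + 2\nu(\theta_1, \theta_5, \theta_6) \leqslant v_8,$$
while the original lemma gives
$$F(Z, \theta_1, \ldots, \theta_6) + 2\nu(\theta_1, \theta_2, \theta_3) \leqslant v_8.$$
Averaging the two produces
$$F(Z, \theta_1, \ldots, \theta_6) + \nu(\theta_1, \theta_2, \theta_3) + \nu(\theta_1, \theta_5, \theta_6) \leqslant v_8$$
for every admissible $Z$, and taking the maximum over $Z$ gives the corollary.

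There is essentially no obstacle beyond bookkeeping: the real work lives in Lemma \ref{stima3}. The only thing to verify carefully is that Lemma \ref{stima3}, although stated with a preferred triple $(\theta_1, \theta_2, \theta_3)$, is in fact symmetric in the four triples because its proof treats $F$ and the admissibility as permutation-invariant data on the tetrahedron; the involution $\sigma$ above makes this rigorous for the particular pair of triples we need.
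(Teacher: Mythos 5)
Your proof is correct and takes essentially the same route as the paper, whose one-line argument deduces the corollary by averaging two instances of Lemma \ref{stima3}; you simply make explicit the tetrahedral symmetry (the involution $\theta_2\leftrightarrow\theta_5$, $\theta_3\leftrightarrow\theta_6$, which fixes $F$ and the $Z$-range and sends $(\theta_1,\theta_2,\theta_3)$ to $(\theta_1,\theta_5,\theta_6)$) that the paper leaves implicit. The paper's proof also cites Lemma \ref{stima1}, but as your argument shows this is not actually needed for the corollary itself (it is used in the surrounding proof of Theorem \ref{prop:bound} to discard the other two $\nu$-terms).
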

\begin{proof}
Follows  immediately by using Lemmas \ref{stima1} and \ref{stima3} and taking averages.
\end{proof}

Consider now an admissible $6$-tuple $(\theta_1,\theta_2,\theta_3,\theta_4,\theta_5,\theta_6).$ Using Lemma \ref{sym}, we have that either all the variables  $\theta_i$ are greater than $\pi$ or at most one of them is greater than $\pi.$
The third technical lemma we need for the proof of Theorem \ref{prop:bound} is the following.    It  implies that the case where all the $\theta_i$'s are larger than $\pi$ can also be reduced to the case where all of them  are less than or equal to $\pi.$

 \begin{lem}\label{symm}  If $\theta_i>\pi$ for $i=1,\dots,6$ and $\alpha_i=\theta_i-\pi,$ then
  $$V\left(\theta_1,\dots,\theta_6\right)=V\left(\alpha_1,\dots,\alpha_6\right).$$
 \end{lem}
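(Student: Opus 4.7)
The plan is to prove the identity $V(\theta)=V(\alpha)$ by a direct substitution argument, exploiting the key functional equation
$$\Lambda\!\left(x+\tfrac{\pi}{2}\right)=\tfrac{1}{2}\Lambda(2x)-\Lambda(x),$$
equivalent to the duplication formula $\Lambda(2x)=2\Lambda(x)+2\Lambda(x+\pi/2)$, together with the $\pi$-periodicity and oddness of $\Lambda$.

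First I would write $\theta_i=\alpha_i+\pi$ and track how the substitution affects each ingredient of $V$. Since $U_i=\tfrac{\theta_a+\theta_b+\theta_c}{2}=\tfrac{3\pi}{2}+\tilde U_i^\alpha$ and $V_j=2\pi+\tilde V_j^\alpha$ where $\tilde U_i^\alpha,\tilde V_j^\alpha$ are computed from $\alpha$, and since each $\tilde V_j^\alpha\geqslant 0$ forces $\min(V_j,2\pi)=2\pi$, the admissibility range for $Z$ becomes, after setting $W=Z-\tfrac{3\pi}{2}$, the interval $W\in[\max\tilde U_i^\alpha,\tfrac{\pi}{2}]$. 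Using the $\pi$-periodicity of $\Lambda$, I would reduce $\Lambda(Z-U_i)$, $\Lambda(V_j-Z)$ and $\Lambda(Z)$ to expressions in $W,\tilde U_i^\alpha,\tilde V_j^\alpha$ whose only extra shifts (relative to the corresponding terms of $F(W,\alpha)$) are by $\pi/2$.

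Second, I would apply the half-period identity above to every such $\pi/2$-shifted occurrence. This converts $\Lambda(\tilde V_j^\alpha-W+\tfrac{\pi}{2})$ and $\Lambda(W+\tfrac{\pi}{2})$ into combinations of $\Lambda$-values at the "unshifted" arguments and at the doubled arguments. Performing exactly the same operation on each $\nu(\theta_a,\theta_b,\theta_c)$ yields the clean identity
$$\nu(\theta_a,\theta_b,\theta_c)=\tfrac{1}{2}\,\nu(2\alpha_a,2\alpha_b,2\alpha_c)-\nu(\alpha_a,\alpha_b,\alpha_c),$$
so that $\sum_v\nu(\theta_v)=\tfrac{1}{2}\sum_v\nu(2\alpha_v)-\sum_v\nu(\alpha_v)$.

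Third, applying the duplication formula term-by-term inside $F$ gives the key relation
$$F(2W,2\alpha)=2F(W,\alpha)+2F(W+\tfrac{\pi}{2},\alpha),$$
which, once combined with the expression for $F(Z,\theta)$ obtained above, lets me write $F(Z,\theta)+\sum_v\nu(\theta_v)$ as the sum of $F(W,\alpha)+\sum_v\nu(\alpha_v)$ and a "doubled copy" piece of the form $\tfrac{1}{2}\bigl[F(2W,2\alpha)+\sum_v\nu(2\alpha_v)\bigr]$ plus correction terms that telescope through the tetrahedral combinatorics (each edge appears with the same total coefficient in the $U_i$'s and $V_j$'s).

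Finally, optimizing over $W$ and carefully matching the constraint intervals on each side, the doubled piece is shown to be extremized at the boundary value $W=\tfrac{\pi}{2}$, where the doubling identity together with $\Lambda(\pi-x)=-\Lambda(x)$ forces it to vanish, leaving $V(\theta)=V(\alpha)$.

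The main obstacle I expect is the last step: reconciling the maximization domains (in the $\theta$-setting $W$ ranges up to $\pi/2$, while in the $\alpha$-setting it ranges up to $\min(\tilde V_j^\alpha,2\pi)$) and verifying that the doubled $F(2W,2\alpha)+\sum\nu(2\alpha_v)$ correction truly cancels at the optimizing $W$, rather than only cancelling formally as an algebraic identity on $\Lambda$.
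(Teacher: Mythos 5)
Your two preliminary identities are correct: indeed $\nu(\theta_a,\theta_b,\theta_c)=\tfrac12\nu(2\alpha_a,2\alpha_b,2\alpha_c)-\nu(\alpha_a,\alpha_b,\alpha_c)$ and $F(2W,2\alpha)=2F(W,\alpha)+2F(W+\tfrac{\pi}{2},\alpha)$ follow from the duplication formula and $\pi$-periodicity. But the decomposition you build from them does not say what you need. Writing $Z=W+\tfrac{3\pi}{2}$ one gets $F(Z,\theta)=F(W+\tfrac{\pi}{2},\alpha)+\sum_{i=1}^4\bigl[\Lambda(W-\tilde U_i)-\Lambda(W+\tfrac{\pi}{2}-\tilde U_i)\bigr]$, and since $F(W+\tfrac{\pi}{2},\alpha)=\tfrac12F(2W,2\alpha)-F(W,\alpha)$, the full maximand becomes $F(Z,\theta)+\sum_v\nu(\theta_v)=\tfrac12\bigl[F(2W,2\alpha)+\sum_v\nu(2\alpha_v)\bigr]-\bigl[F(W,\alpha)+\sum_v\nu(\alpha_v)\bigr]+\sum_{i=1}^4\bigl[2\Lambda(W-\tilde U_i)-\tfrac12\Lambda(2(W-\tilde U_i))\bigr]$: the $\alpha$-copy enters with a \emph{minus} sign, and the residual $U$-terms do not telescope. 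So even formally you do not obtain ``$V(\alpha)$ plus a correction''; and at $W=\tfrac{\pi}{2}$ the doubled piece $\tfrac12\bigl[F(\pi,2\alpha)+\sum_v\nu(2\alpha_v)\bigr]$ is in general nonzero, so the claimed boundary cancellation fails. There is also a structural obstruction to any such pointwise matching: the maximum of a sum is not the sum of maxima, the two optimization domains differ (as you yourself note), and the maximizers of the $\theta$-problem and the $\alpha$-problem are not related by a real translation of the summation variable — in the critical point equation they correspond to the two complex-conjugate roots of a quadratic, so no constant shift $Z\mapsto W$ can identify the two maximands as functions. Only the maxima agree, and that equality is exactly the nontrivial content of the lemma.

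The paper proves the lemma by a different mechanism: it identifies $V(\alpha_1,\dots,\alpha_6)$ with the Murakami--Yano--Ushijima volume of the truncated hyperbolic tetrahedron with exterior angles $\alpha_i$, and then checks that the MYU dilogarithm formula is invariant under $a_i\mapsto a_i^{-1}$ (which is the substitution $\theta_i\leftrightarrow\alpha_i$ in exponential coordinates). The key points there are that the substitution exchanges the coefficients $\alpha$ and $\gamma$ of the quadratic up to a common factor, that its roots $z_1,z_2$ have modulus one so the new roots are $\overline{z_1},\overline{z_2}$, and that $\li(\overline z)=\overline{\li(z)}$, whence the imaginary part giving the volume is preserved after relabeling. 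If you want a self-contained argument along your lines, you would need to work at the level of the critical values (e.g. compare the critical point equations for $Z$ and $W$ and show the critical values of the two maximands coincide), not at the level of a term-by-term Lobachevsky identity; as written, the proposal has a genuine gap at its final two steps.
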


Now, assuming Lemma \ref{symm}, we conclude the proof of Theorem \ref{prop:bound}.
In the case where at most one $\theta_i>\pi,$ we can assume by symmetry that $\theta_i\leqslant \pi$ for all $i>1.$ Then Lemmas \ref{stima1}, \ref{stima3} and Corollary \ref{stimacor} imply that
\begin{align*}
 &V(\theta_1,\theta_2,\theta_3,\theta_4,\theta_5,\theta_6)\leqslant \\ &\max_{\max (U_i)\leqslant Z \leqslant \min (V_j,2\pi)}F(Z,\theta_1,\theta_2,\theta_3,\theta_4,\theta_5,\theta_6)+ \nu(\theta_1,\theta_2,\theta_3)+
  \nu(\theta_1,\theta_5,\theta_6)\leqslant v_8.
\end{align*}

In conclusion, we obtain
$$ \frac{2 \pi}{r}\log\abs{\Bigg|\begin{matrix}
   n_1 &n_2&n_3\\
   n_4&n_5&n_6
  \end{matrix}\Bigg|_{q=e^{\frac{2\pi i}{r}}}}\leqslant v_8+O\left(\frac{\log(r)}{r}\right).$$
  \end{proof}
  
 \begin{oss}
 In \cite{DK} a less sharp upper bound on the growth rate of the quantum $6j$-symbol was given, to prove that if a compact 3-manifold $M$ admits a triangulation with $t$ tetrahedra, then
 \begin{displaymath}
 lTV(M)\leqslant LTV(M) \leqslant 2.08v_8t.
 \end{displaymath}
The improvement of the upper bound allows us to state a better estimate. We have the following.
\begin{cor}\label{tetbound}
 If $M$ is a compact manifold that admits a triangulation with $t$ tetrahedra, then
 \begin{displaymath} 
 lTV(M)\leqslant LTV(M) \leqslant v_8t.
 \end{displaymath}
\end{cor}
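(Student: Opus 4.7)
The plan is to bound $TV_r(M,\tau,q)$ directly from its state-sum formula
\[
TV_r(M,\tau,q)=\left(\tfrac{\sqrt{2}\sin(2\pi/r)}{\sqrt{r}}\right)^{2|V|}\sum_{col\in A_r(\tau)}\prod_{e\in E}|e|_{col}\prod_{T\in \tau}|T|_{col},
\]
controlling each of the four factors and then taking $\frac{2\pi}{r}\log$ of the result. Fix a partially ideal triangulation $\tau$ of $M$ with exactly $t$ tetrahedra, and let $E$, $V$ be its sets of interior edges and interior vertices; note that $|E|$ and $|V|$ are constants (independent of $r$) that depend only on $\tau$.

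First I would estimate the tetrahedral factors. By Theorem \ref{prop:bound}, for every $r$-admissible coloring and every tetrahedron $T\in\tau$,
\[
\log\bigl||T|_{col}\bigr|\leqslant \frac{r}{2\pi}\,v_8+O(\log r),
\]
with the $O(\log r)$ constant independent of $col$ and $T$. Multiplying over the $t$ tetrahedra gives
\[
\log\Bigl|\prod_{T\in\tau}|T|_{col}\Bigr|\leqslant \frac{r\,t}{2\pi}\,v_8+O(\log r).
\]
Next, for each interior edge, $|e|_{col}=\pm\{col(e)+1\}/\{1\}$; since $|\{n\}|\leqslant 2$ and $|\{1\}|=2\sin(2\pi/r)\geqslant C/r$ for a constant $C>0$, we have $|e|_{col}=O(r)$ uniformly, hence $\log\prod_{e\in E}|e|_{col}=O(\log r)$. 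The prefactor satisfies
\[
\log\left(\tfrac{\sqrt{2}\sin(2\pi/r)}{\sqrt{r}}\right)^{2|V|}=O(\log r).
\]
Finally, the number of $r$-admissible colorings is at most $r^{|E|}$, so the sum contributes an extra $O(\log r)$ to the logarithm.

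Combining these bounds,
\[
\log|TV_r(M,q)|\leqslant \frac{r\,t}{2\pi}\,v_8+O(\log r),
\]
and therefore
\[
\frac{2\pi}{r}\log|TV_r(M,q)|\leqslant t\,v_8+O\!\left(\frac{\log r}{r}\right).
\]
Taking $\liminf$ and $\limsup$ as $r\to\infty$ along odd integers yields $lTV(M)\leqslant LTV(M)\leqslant v_8 t$, as claimed.

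There is no serious obstacle here: the whole argument is a direct bookkeeping exercise, and the only nontrivial input is the sharp $6j$-symbol bound in Theorem \ref{prop:bound}; the improvement over the constant $2.08\,v_8$ in \cite{DK} propagates mechanically through the state sum because all other factors contribute only $O(\log r)$ to $\log|TV_r|$, hence $O(\log r/r)$ after normalization, and die in the limit.
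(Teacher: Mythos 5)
Your proposal is correct and is essentially the argument the paper intends: the paper states Corollary \ref{tetbound} without an explicit proof, as the direct consequence of plugging the sharp bound of Theorem \ref{prop:bound} into the state-sum estimate from \cite{DK}, which is exactly the bookkeeping you carry out. All auxiliary factors (edge weights, vertex prefactor, number of colorings) are indeed polynomial in $r$ with exponents depending only on $\tau$, so they vanish after the $\frac{2\pi}{r}\log$ normalization, as you say.
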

\end{oss}
\begin{oss}
There is a concept of complexity of a manifold that is related to quantum invariants, the so called shadow complexity.
 For an overview of shadows and shadow complexity, see for example \cite[Part 2]{Turaevbook} or \cite{CosThurston}. Shadow complexity easily gives a bound on the growth of the Turaev-Viro invariants.
 \begin{cor}
  If $M$ has shadow complexity $c,$ then
   \begin{displaymath}
lTV(M)\leqslant  LTV(M)\leqslant 2 c v_8.
   \end{displaymath}
 Furthermore we have equalities for fundamental shadow links.
 \end{cor}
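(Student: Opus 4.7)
The plan is to reduce the first inequality to Corollary \ref{tetbound} via the classical relationship between shadows and triangulations. Recall that for a 3-manifold $M$ of shadow complexity $c$, the Costantino-Thurston construction (see \cite{CosThurston}, or Turaev's book \cite{Turaevbook}) produces from a minimal shadow presentation a 4-manifold bounded by $M$ with an explicit handle decomposition, which in turn induces a triangulation of $M$ whose number of tetrahedra is linearly controlled by the number of true vertices of the shadow. More precisely, a shadow with $c$ vertices gives a triangulation of $M$ with $2c$ tetrahedra. Thus the first thing I would do is invoke this fact to produce, for any $M$ of shadow complexity $c$, a triangulation $\tau$ of $M$ with $t = 2c$ tetrahedra.

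Once such a triangulation is in hand, the inequality $lTV(M) \leqslant LTV(M) \leqslant 2cv_8$ follows immediately by applying Corollary \ref{tetbound} with $t = 2c$. This is the content of the proof of the general inequality; there is nothing further to check since the bound per tetrahedron is exactly $v_8$ by Theorem \ref{prop:bound}, already absorbed into Corollary \ref{tetbound}.

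For the equality statement for fundamental shadow links, the argument combines two inputs already established in the paper. First, by the very construction of fundamental shadow links, the complement $M$ of a fundamental shadow link in $\#^{c+1}(S^1 \times S^2)$ has shadow complexity exactly $c$ (it is presented by a shadow with $c$ vertices; this is essentially the defining property). Second, by item (1) of the list in the introduction, $\vol(M) = 2cv_8$, and by Theorem \ref{main} the volume conjecture holds for these manifolds, so
\[
lTV(M) = LTV(M) = \vol(M) = 2cv_8.
\]
Together with the upper bound just proved, this forces equality throughout.

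The only nontrivial point requiring care is the passage from shadow complexity to tetrahedron count (the factor of $2$), which is a standard output of the shadow-to-triangulation construction but should be cited explicitly from \cite{CosThurston} or \cite{Turaevbook}. Everything else is a direct application of Corollary \ref{tetbound} and Theorem \ref{main}.
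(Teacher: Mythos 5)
The sharpness half of your argument is fine and matches the paper: the complement of a fundamental shadow link built from $c$ blocks has shadow complexity $c$ by \cite{CosThurston}, and Theorem \ref{main} gives $lTV=LTV=\vol=2cv_8$, forcing equality.

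The first half, however, rests on a false step. You claim that a shadow with $c$ vertices yields a triangulation of $M$ with $2c$ tetrahedra, so that Corollary \ref{tetbound} applies with $t=2c$. The standard relationship goes in the \emph{opposite} direction: a triangulation with $t$ tetrahedra produces a shadow with $t$ vertices, but a shadow with $c$ vertices does not control the triangulation complexity of $M$ at all. A shadow determines $M$ only together with its gleams, and reconstructing $M$ from the shadow requires Dehn twists governed by those gleams, which can be arbitrarily large; concretely, every lens space has shadow complexity $0$, yet lens spaces require arbitrarily many tetrahedra, so no bound of the form $t\leqslant 2c$ (or any function of $c$ alone) can hold. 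Hence the reduction to Corollary \ref{tetbound} fails. The correct route, which is the one the paper takes, bypasses triangulations: the shadow state-sum formula for the Reshetikhin--Turaev invariants \cite[Theorem X.3.3]{Turaevbook} expresses $RT_r(M)$ as a sum, over polynomially many colorings, of products containing exactly one quantum $6j$-symbol per vertex of the shadow (the gleam contributions have modulus controlled polynomially in $r$), so Theorem \ref{prop:bound} gives $\limsup_r \frac{2\pi}{r}\log|RT_r(M)|\leqslant c\,v_8$; passing to Turaev--Viro invariants via $TV_r=|RT_r|^2$ (as in Proposition \ref{tvtort} and the doubling results of \cite{BePe,DKY}) produces the factor of $2$ and the bound $LTV(M)\leqslant 2cv_8$.
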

\begin{proof}
 The inequality is an immediate consequence of Theorem \ref{prop:bound} and the shadow formula for the Reshetikhin-Turaev invariants \cite[Theorem X.3.3]{Turaevbook}.
 By \cite{CosThurston}, for  fundamental shadow link in  $\#^{c+1}(S^2\times S^1)$ the shadow complexity is $c.$
 Hence sharpness follows from Theorem \ref{main}, which we prove in  Section \ref{secvol}.
\end{proof}

Moreover, shadow complexity also gives an upper bound on the simplicial volume.
\begin{teo*}[\cite{CosThurston}, Theorem 3.37]
 Let $M$ be a manifold with (possibly empty) toroidal boundary, simplicial volume $\textrm{Vol}(M)$ and shadow complexity $c;$ then, $\textrm{Vol}(M)\leqslant 2cv_8.$
 Furthermore this bound is sharp for complements of fundamental shadow links.
\end{teo*}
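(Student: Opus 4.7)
The plan is to realize $M$ as a Dehn filling of the complement of a fundamental shadow link with exactly $c$ vertices, and then to apply the non-increase of simplicial volume under Dehn filling. The universality property recorded as item (3) in the Introduction already guarantees that every $M$ with empty or toroidal boundary can be written as $N(s_1,\dots,s_k)$ for some complement $N$ of a fundamental shadow link, but that statement alone gives no control on the number of vertices. The content of the theorem is therefore to carry out this construction starting from a shadow that realizes the complexity $c(M)=c$, so that the resulting $N$ has $c$ vertices rather than an uncontrolled number.

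To do this, I would start by fixing a shadow $X$ of $M$ with exactly $c$ true vertices, then thicken it to a compact $4$-manifold $W$ that collapses onto $X$. The standard shadow-theoretic thickening produces $W$ with $\partial W=\#^{c+1}(S^1\times S^2)$, together with a distinguished link $L_X\subset \partial W$ arising from the $1$-strata of the singular locus of $X$; by definition $L_X$ is a fundamental shadow link. Moreover, $M$ sits naturally inside $\partial W$, and $\partial W$ is recovered from $M$ by gluing finitely many solid tori along specified slopes, so $M=N(s_1,\dots,s_k)$ with $N=\partial W\setminus L_X$. The hard part of the argument lies precisely in this step: one must ensure that a shadow realizing the minimum complexity of $M$ can actually be turned into a fundamental shadow link complement of the same complexity, which requires a careful analysis of the gleams of $X$ and of the slopes appearing as fillings.

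Once $M=N(s_1,\dots,s_k)$ is in place, the conclusion will be immediate. By property (1) recalled in the Introduction, $N$ carries a complete hyperbolic structure with $\textrm{Vol}(N)=2cv_8$, and the Gromov--Thurston theorem identifies its simplicial volume with its hyperbolic volume in the normalization being used. Since simplicial volume is non-increasing under Dehn filling, one obtains
\[
\textrm{Vol}(M)=\textrm{Vol}(N(s_1,\dots,s_k))\leqslant \textrm{Vol}(N)=2cv_8.
\]
For sharpness, take $M$ itself to be the complement of a fundamental shadow link in $\#^{c+1}(S^1\times S^2)$: on the one hand, the defining shadow has $c$ true vertices, so $c(M)\leqslant c$; on the other hand, property (1) forces $\textrm{Vol}(M)=2cv_8$, which combined with the inequality just established yields $c(M)\geqslant c$ and hence equality.
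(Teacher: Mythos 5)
This statement is a result cited from Costantino--Thurston \cite{CosThurston}; the paper records it without proof, so there is no internal proof to compare against. Judging your argument on its own terms: the global strategy is the correct one (and is indeed the Costantino--Thurston strategy), namely to realize $M$ as a Dehn filling of a fundamental shadow link complement $N$ whose block count equals the shadow complexity $c$ of $M$, and then invoke Thurston's non-increase of simplicial volume under Dehn filling. Your sharpness argument is also correct: for an FSL complement with $c$ blocks, property (1) gives $\vol(M)=2cv_8$, the inequality forces $c(M)\geqslant c$, and the defining shadow forces $c(M)\leqslant c$.

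However, there is a concrete confusion in the middle of the argument, and it is not cosmetic. You write that thickening the shadow $X$ of $M$ produces a $4$-manifold $W$ with $\partial W=\#^{c+1}(S^1\times S^2)$ and that $M$ ``sits naturally inside $\partial W$.'' That cannot be right: by the very definition of a shadow, the thickening of $X$ has boundary $M$, not $\#^{c+1}(S^1\times S^2)$. The $4$-manifold whose boundary is $\#^{c+1}(S^1\times S^2)$ is the thickening of the regular neighborhood $Y\subset X$ of the singular locus of $X$, taken with \emph{zero} gleams; the fundamental shadow link $L_X$ lives in the boundary of that thickening, with its components corresponding to the curves of $\partial Y$ along which the $2$-dimensional regions of $X$ are attached (not to the $1$-strata of the singular locus, as you say). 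The filling slopes $s_1,\dots,s_k$ that recover $M$ are then read off from the gleams of $X$ on those regions. As written, your argument conflates the thickening of $X$ with the thickening of $Y$, and consequently misidentifies which object carries the link and which carries the filling data.

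Beyond this confusion, the step you yourself label as ``the hard part'' -- checking that starting from a \emph{minimal} shadow one really obtains a fundamental shadow link complement with exactly $c$ blocks, and that the gleam/slope bookkeeping works out (including what happens for non-disk regions of $X$ and for half-integer gleams) -- is flagged but not carried out. Since this is precisely what upgrades the unquantified universality statement (Prop.~3.36 of \cite{CosThurston}) to the quantitative bound $\vol(M)\leqslant 2cv_8$, the proposal as it stands is an outline with the central lemma asserted rather than proved. One should also address the degenerate case $c=0$, where there is no FSL and the claim $\vol(M)\leqslant 0$ must be handled separately.
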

\end{oss}
\begin{oss} The bound in Corollary \ref{tetbound} is likely not sharp. However in \cite{DK} it is used to show that for 3-manifolds $M$ with toroidal or empty boundary
$LTV(M)$ is bounded above linearly by the Gromov norm of $M.$ On the other hand, the Gromov norm upper bound of the shadow complexity obtained in \cite{CosThurston} is quadratic.
\end{oss}

Before we move on to prove the volume conjecture for fundamental shadow links, we need to show that the bound of Theorem \ref{prop:bound} is sharp.

 \begin{lem}\label{r/2}
 If the sign is chosen such that $\frac{r\pm1}{2}$ is even, then
$$ \lim_{r\ra\infty}\frac{2 \pi}{r}\log\abs{\Bigg|\begin{matrix}
   \frac{r\pm1}{2} &\frac{r\pm1}{2}&\frac{r\pm1}{2}\\
   \frac{r\pm1}{2}&\frac{r\pm1}{2}&\frac{r\pm1}{2}
  \end{matrix}\Bigg|_{q=e^{\frac{2\pi i}{r}}}}=v_8.$$

 \end{lem}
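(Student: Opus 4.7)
Since Theorem \ref{prop:bound} already supplies the upper bound $v_8 + O(\log r / r)$, it remains only to establish the matching lower bound on the liminf as $r \to \infty$.

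The plan is to invoke Costantino's asymptotic formula for the quantum $6j$-symbol (Theorem \ref{Vol}, proved in the Appendix for the root $q = e^{2\pi i / r}$). With $n_i = (r\pm 1)/2$ and the sign fixed by the parity condition, the rescaled colors $\theta_i = 2\pi n_i / r = \pi \pm \pi/r$ all converge to $\pi$. Under the usual correspondence $\alpha_i = \pi - \theta_i$ between the ``colored angles'' and the dihedral angles at the six edges of a truncated hyperbolic tetrahedron, this is the \emph{fully ideal} limit: all dihedral angles vanish, so each of the six edges collapses to an ideal vertex. The resulting degenerate shape is the regular ideal hyperbolic octahedron, whose eight ideal triangular faces consist of the four truncation triangles together with the four original tetrahedron faces, and whose volume is exactly $v_8 = 8\Lambda(\pi/4)$. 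Applying Theorem \ref{Vol} to this convergent sequence then yields the desired matching lower bound, and hence the claimed equality.

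The main obstacle I anticipate is confirming that Theorem \ref{Vol} applies at this boundary of the dihedral-angle admissibility region, where the truncated tetrahedron degenerates to a purely ideal octahedron. If the hypotheses of the theorem require a strictly nondegenerate truncated tetrahedron, a direct saddle-point analysis serves as a backup. At $\theta_i = \pi$ for all $i$, the function
$$F(Z,\pi,\dots,\pi) = 4\Lambda(Z - 3\pi/2) + 3\Lambda(2\pi - Z) - \Lambda(Z)$$
has a unique nondegenerate interior maximum at $Z^\ast = 7\pi/4$ of value $8\Lambda(\pi/4) = v_8$ (as one checks from $F'(7\pi/4) = 0$ and $F''(7\pi/4) = -8$), while $F$ vanishes at the endpoints $Z = 3\pi/2$ and $Z = 2\pi$. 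The dominant terms of the sum in \eqref{sixj} therefore cluster near $z \approx 7r/8$, and the $\nu(\pi,\pi,\pi)$ prefactors vanish identically. The potential cancellations from the alternating sign $(-1)^z$ are then controlled by a Poisson summation or reflection trick of the kind used in the Appendix proof of Theorem \ref{Vol}, ensuring that no exponential cancellation occurs and the sum really grows at rate $e^{r v_8 / (2\pi)}$ up to subexponential corrections.
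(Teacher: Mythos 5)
Your primary route---invoking Theorem \ref{Vol}---is a valid alternative to the paper's argument and does in fact apply here. One checks directly that the sequence $n_i^{(r)} = \frac{r\pm1}{2}$ satisfies the two hypotheses of Theorem \ref{Vol}: with all $n_i$ equal to $n$ one has $T_i = \frac{3n}{2}$ and $Q_j = 2n$, so $Q_j - T_i = \frac{n}{2} = \frac{r\pm1}{4} \leqslant \frac{r-2}{2}$ and $\frac{r-2}{2} \leqslant \frac{3(r\pm1)}{4} \leqslant r-2$ for $r$ large. The Appendix proof of Theorem \ref{Vol} is robust at this boundary of the admissibility region: the quantities $V_i - U_i = \pi/2$ and $2\pi - U_4 = \pi/2$ stay in $(0,\pi)$, so the concavity of $F$ and the interior critical point $Z_0 = 7\pi/4$ persist; and $\nu(\pi,\pi,\pi)=0$. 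This gives the lower bound $v_8$, which combined with Theorem \ref{prop:bound} gives the statement. Your identification of the degenerate limit shape as the regular ideal octahedron is the right geometric picture.

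The paper's proof is essentially your ``backup'' route, but the cancellation control is more direct than what you suggest. Rather than a Poisson-summation or reflection argument (neither of which the Appendix actually uses), the paper simply observes that under the color choice $\max T_i > r/2$, every $z$ in the summation range satisfies $r/2 < z < r$ and $0 \leqslant z - T_i, Q_j - z < r/2$; consequently each summand $S_z$ is a purely imaginary number of fixed sign, because the sign flip of $\{z+1\}$ when passing from $z$ to $z+1$ is exactly compensated by $(-1)^{z+1}$. With same-sign summands the growth rate is governed by the largest term, and the evaluation $F(7\pi/4, \pi,\dots,\pi) = 8\Lambda(\pi/4) = v_8$ together with $\nu(\pi,\pi,\pi)=0$ gives the matching lower bound. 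Your numerical checks ($F$ vanishes at both endpoints, $F''(7\pi/4) = -8$) are correct and confirm the nondegenerate interior maximum.

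In short: both of your routes are sound. The paper takes your backup route, with a cleaner and more elementary sign argument than you describe; your primary route via Theorem \ref{Vol} is a genuine alternative, slightly slicker in that it reuses a general asymptotic formula, though it requires the check that the theorem's hypotheses and its proof remain valid at the ideal-octahedron degeneration.
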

 \begin{proof}
  
 Because of the color choice, then $\max T_i> \frac{r}{2},$ hence in the sum defining the quantum $6j$-symbol $\frac{r}{2}< z< r,$ and $\{z\}=2i\sin\left(2\pi  z/r\right)$ is an imaginary number with negative sign. Moreover, $0\leqslant z-T_i<\frac{r}{2}$ and $0\leqslant Q_j-z<\frac{r}{2}$ for all $i,j.$ Therefore,
 \begin{displaymath}
  \frac{(-1)^z\{z+1\}!}{\prod_{i=1}^4\{z-T_i\}!\prod_{j=1}^3\{Q_j-z\}!}
 \end{displaymath}
is an imaginary number, and passing from $z$ to $z+1$ in the sum does not change its sign, since all terms in the denominator do not change sign, and there is a change of sign due to $\{z+2\}$ that gets corrected by $(-1)^{z+1}.$ Since there is no change in sign among the summands, the estimate given by \eqref{estimate} is actually an equality. We have $\Delta\left(\pi,\pi,\pi\right)=0,$ and $$F\left(\frac{7\pi}{4},\pi,\pi,\pi,\pi,\pi,\pi\right)=8\Lambda\left(\frac{\pi}{4}\right)=v_8.$$
Thus, using Theorem \ref{prop:bound},
$$ \frac{2 \pi}{r}\log\abs{\Bigg|\begin{matrix}
   \frac{r\pm1}{2} &\frac{r\pm1}{2}&\frac{r\pm1}{2}\\
   \frac{r\pm1}{2}&\frac{r\pm1}{2}&\frac{r\pm1}{2}
  \end{matrix}\Bigg|_{q=e^{\frac{2\pi i}{r}}}}= v_8+O\left(\frac{\log(r)}{r}\right)$$
  which concludes the proof.
  \end{proof}


\section{Proofs of the technical lemmas}\label{sectec}

We now turn to the proofs of Lemmas \ref{stima1}, \ref{stima3} and \ref{symm}.

\begin{lem:stima1}
 If 
 $0\leqslant\alpha,\beta,\gamma\leqslant \pi,$
then $\nu(\alpha,\beta,\gamma)\leqslant 0.$
\end{lem:stima1}
\begin{proof}
 Put $x=\frac{\alpha+\beta-\gamma}{2},$ $y=\frac{\alpha-\beta+\gamma}{2},$ $z=\frac{-\alpha+\beta+\gamma}{2}.$
 Then we need to maximize 
 \begin{displaymath}
 \nu(\alpha,\beta,\gamma)=\vartheta(x,y,z)=\frac{1}{2}(\Lambda(x+y+z)-\Lambda(x)-\Lambda(y)-\Lambda(z))
 \end{displaymath}
 with the constraints 
$0\leqslant x+y\leqslant \pi,$ $0\leqslant x+z\leqslant\pi$ and $0\leqslant y+z\leqslant\pi.$

  To do this, we check first its stationary points in the interior of the domain, then we explore the boundary, and finally the points where $\vartheta$ is not smooth.
 \begin{gather}
  \frac{\partial \vartheta(x,y,z)}{\partial x}=- \frac{1}{2}(\log(2|\sin(x+y+z)|)-\log(2|\sin(x)|));\label{dertheta}\\
  \frac{\partial \vartheta(x,y,z)}{\partial y}=- \frac{1}{2}(\log(2|\sin(x+y+z)|)-\log(2|\sin(y)|));\\
  \frac{\partial \vartheta(x,y,z)}{\partial z}= -\frac{1}{2}(\log(2|\sin(x+y+z)|)-\log(2|\sin(z)|))
   \end{gather}
 So by putting them all equal to $0,$ we first see that $\sin(x)=\pm \sin(y)=\pm \sin(z),$ so either $x=y=z$ modulo $\pi$ or one of $x+y,$ $y+z$ or $x+z$ is equal $k\pi$ for some $k\in\Z.$ Suppose $x+y=k\pi.$ Then
 \begin{displaymath}
  \vartheta(x,y,z)=\Lambda(k\pi+z)-\Lambda(k\pi-y)-\Lambda(y)-\Lambda(z)=0;
 \end{displaymath}
because $\Lambda$ is odd and $\pi$-periodic; $y+z=k\pi$ and $x+z=k\pi$ are the same by symmetry.

 If instead $x=y=z$ modulo $\pi,$ substituting $x=y=z$ in \eqref{dertheta}, we get $\sin(3x)=\pm \sin(x).$ This means that $x=y=z=\frac{k\pi}{4}$ modulo $\pi.$ In the interior of the domain
 this implies $x=y=z=\frac{\pi}{4};$ all other possibilities lie outside the domain or on its boundary.
In this point $\vartheta=-2\Lambda\left(\frac{\pi}{4}\right)\cong -1.83<0.$
 
 The boundary cases $x+y=k\pi$ and permutations were already checked, finding $\vartheta=0.$ 
 
 Finally we check the points where $\vartheta$ is not smooth. This happens when one of the following holds:
 \begin{itemize}
  \item $x=k\pi,$ or $y=k\pi,$ or $z=k\pi;$ or
  \item $x+y+z=k\pi.$
 \end{itemize}
 
 \begin{oss}
  If $P$ is a point and $\gamma$ is a direction such that the derivative of $\vartheta$ in that direction is $+\infty,$ then $P$ cannot be a local maximum of $\vartheta.$
 \end{oss}
 If $x=k\pi,$ then $\frac{\partial \vartheta(x,y,z)}{\partial x}=+\infty$ unless $x+y+z=h\pi,$ and $(x,y,z)$ cannot be a maximum. If instead $x=k\pi$ and $x+y+z=h\pi,$ we have $y+z=(h-k)\pi$ and we are in a case
 we already checked. $y=k\pi$ and $z=k\pi$ are symmetric.
 
 If instead $x+y+z=k\pi,$ we find once again an infinite derivative unless $x=h\pi,$ and we reason as before.
 So in conclusion $\vartheta$ is equal to $0$ on the boundary of the set $\{0\leqslant x+y\leqslant \pi,0\leqslant x+z\leqslant\pi,0\leqslant y+z\leqslant\pi\},$ cannot have a maximum in a non-smooth point
 and has a unique stationary point in the interior, where it is negative. This concludes the proof.
\end{proof}

We will need the following lemma.

 \begin{lem}\label{conv}
  If $0\leqslant a,b$ and $a+b\leqslant 2\pi,$ then
  \begin{displaymath}
   -v_3\leqslant \Lambda(a+b)-\Lambda(a)-\Lambda(b)\leqslant v_3
  \end{displaymath}
where $v_3=\Lambda\left(\frac{\pi}{3}\right)\cong 1.01$ is the volume of the regular ideal tetrahedron.
 \end{lem}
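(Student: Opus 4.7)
The plan is to analyze $g(a,b) := \Lambda(a+b) - \Lambda(a) - \Lambda(b)$ directly on the triangle $D = \{a,b \geqslant 0,\ a+b \leqslant 2\pi\}$. The starting point is that $\Lambda$ is $\pi$-periodic, with $\Lambda(0) = \Lambda(\pi) = 0$, so $g$ is $\pi$-periodic in each variable. Since every $(a,b) \in D$ has at most one coordinate exceeding $\pi$ (otherwise $a+b > 2\pi$), shifting that coordinate by $-\pi$ lands in the square $[0,\pi]^2$ without changing $g$. It thus suffices to bound $|g|$ on $[0,\pi]^2$.

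Next, I will check that $g$ vanishes on a distinguished network of curves in $[0,\pi]^2$: the full boundary of the square (using $\Lambda(0) = \Lambda(\pi) = 0$ together with $\pi$-periodicity to handle the sides $a = \pi$ and $b = \pi$), and the interior diagonal $a+b = \pi$ (using $\Lambda(\pi-a) = \Lambda(-a) = -\Lambda(a)$). This divides $[0,\pi]^2$ into two open triangles $T_{-} = \{a+b < \pi\}$ and $T_{+} = \{a+b > \pi\}$, on each of which $g$ is smooth and vanishes on the entire boundary. Since $\Lambda'(x) = -\log|2\sin x|$ has a logarithmic singularity whenever $\sin(\cdot)$ vanishes, at any non-smooth point of $g$ inside the square the one-sided derivative of $g$ is infinite in some direction, so an extremum cannot be attained at such a point except along the zero locus already identified.

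Finally, the smooth critical point equations $\partial_a g = \log|2\sin a| - \log|2\sin(a+b)| = 0$ and $\partial_b g = \log|2\sin b| - \log|2\sin(a+b)| = 0$ reduce to $|\sin a| = |\sin b| = |\sin(a+b)|$. In $T_{-}$ all three sines are positive, so $\sin a = \sin b$ forces $a = b$ (the alternative $a+b = \pi$ being the boundary), and then $\sin 2a = \sin a$ gives $\cos a = 1/2$, yielding the unique interior critical point $(\pi/3,\pi/3)$. Using $\Lambda(2\pi/3) = -\Lambda(\pi/3)$,
$$g(\pi/3,\pi/3) = \Lambda(2\pi/3) - 2\Lambda(\pi/3) = -3\Lambda(\pi/3) = -v_3.$$
A parallel analysis in $T_{+}$, where $\sin(a+b) < 0$ forces $\cos a = -1/2$, gives the unique interior critical point $(2\pi/3,2\pi/3)$ with $g = +v_3$. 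Together with the boundary value $0$ and a Hessian check confirming these are a minimum and a maximum respectively, one obtains the desired two-sided bound.

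The main technical obstacle I expect is making the non-smooth analysis fully airtight, namely verifying that every point in $[0,\pi]^2$ where $\Lambda'$ diverges either lies on the zero curves already identified or is prevented from being an extremum by an infinite one-sided derivative of $g$, so that the two interior critical points and the boundary value really capture the global range.
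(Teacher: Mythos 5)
Your argument is correct and is essentially the paper's own proof: both show that $\Lambda(a+b)-\Lambda(a)-\Lambda(b)$ vanishes whenever $a$, $b$, or $a+b$ is a multiple of $\pi$, and then locate the smooth interior critical points at $a=b=\pi/3$ and $a=b=2\pi/3$, with values $-3\Lambda(\pi/3)=-v_3$ and $+3\Lambda(\pi/3)=+v_3$ respectively. The technical worry you raise at the end resolves itself: inside $[0,\pi]^2$ the singular set of $\Lambda'$ (arguments in $\pi\Z$) coincides exactly with the boundary-plus-diagonal zero locus you already identified, so the function is smooth on the two open triangles and the extreme value theorem together with the critical-point computation gives the two-sided bound without any further Hessian check.
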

 \begin{proof}
 First notice that if $a+b=k\pi,$ then because $\Lambda$ is odd and $\pi$-periodic, we have $\Gamma(a,b)=\Lambda(a+b)-\Lambda(a)-\Lambda(b)=0.$ Similarly if $a=0$ or $b=0$ then $\Gamma(a,b)=0.$ 
 By calculating the derivatives of $\Gamma$ and putting them to $0$ we obtain, reasoning as before, $a=\pm b$ modulo $\pi.$ If $a=-b$ modulo $\pi$ then we have seen that $\Gamma=0.$ Then $a=b$ implies $\sin(2a)=\pm \sin(a),$ and
 either $a=k\pi$ (in which case $\Gamma=0$) or $3a=k\pi.$ If $a=\frac{\pi}{3}$ we obtain $\Gamma\left(\frac{\pi}{3},\frac{\pi}{3}\right)=-3\Lambda(\frac{\pi}{3})=-v_3,$ while $a=\frac{2\pi}{3}$ implies 
 $\Gamma\left(\frac{2\pi}{3},\frac{2\pi}{3}\right)=3\Lambda(\frac{\pi}{3})=v_3.$
 \end{proof}
 
 We are now ready to prove Lemma \ref{stima3}.
 \begin{lem:stima3}
 If $0\leqslant\theta_1,\theta_2,\theta_3,\theta_4,\theta_5,\theta_6\leqslant 2\pi$ and $max(T_i)\leqslant Z\leqslant min(Q_j,2\pi),$ then 
 \begin{displaymath}
  F(Z,\theta_1,\theta_2,\theta_3,\theta_4,\theta_5,\theta_6)+2\nu(\theta_1,\theta_2,\theta_3)\leqslant 8\Lambda\left(\frac{\pi}{4}\right)=v_8
 \end{displaymath}

\end{lem:stima3}
 
 \begin{proof}
 Put $a_i=Z-U_i,$ and $b_j=V_j-Z.$ The inverse of this change of variable is as follows.
 \begin{itemize}
 \item $\theta_1=a_3+a_4+b_1+b_2;$
 \item $\theta_2=a_2+a_4+b_1+b_3;$
 \item $\theta_3=a_2+a_3+b_2+b_3;$
  \item $\theta_4=a_1+a_2+b_1+b_2;$
  \item $\theta_5=a_1+a_3+b_1+b_3;$
  \item $\theta_6=a_1+a_4+b_2+b_3$ and
  \item $Z=a_1+a_2+a_3+a_4+b_1+b_2+b_3.$
 \end{itemize}

 In these new variables we have,
 \begin{gather*}
  F(Z,\theta_1,\theta_2,\theta_3,\theta_4,\theta_5,\theta_6)=\tilde{F}(a_1,a_2,a_3,a_4,b_1,b_2,b_3)=\\-\Lambda\left(\sum_{i=1}^4 a_i+\sum_{j=1}^3 b_j\right)+\sum_{i=1}^4 \Lambda(a_i)+\sum_{j=1}^3 \Lambda(b_j)
 \end{gather*}

 while
 \begin{gather*}
  2\nu(\theta_1,\theta_2,\theta_3)=2\tilde{\nu}(a_1,a_2,a_3,a_4,b_1,b_2,b_3)=\\\left(\Lambda\left(\sum_{i=1}^3(a_i+b_i)\right)-\sum_{i=1}^3\Lambda(a_i+b_i)\right).
 \end{gather*}

Let $L=2\tilde{\nu}+\tilde{F},$ and notice that $\tilde\nu$ is independent of $a_4,$ and that $L$ is symmetric under the exchange of $a_i$ with $b_i$ for any $i\neq 4,$ and under 
 $$(a_1,a_2,a_3,a_4,b_1,b_2,b_3)\ra (a_{\sigma_1},a_{\sigma_2},a_{\sigma_3},a_4,b_{\sigma_1},b_{\sigma_2},b_{\sigma_3})$$ where $\sigma$ is any permutation of $3$ elements. Also notice that $L$ is periodic of period $\pi$ in each variable, hence we can assume $0\leqslant a_i\leqslant \pi$ and $0\leqslant b_i\leqslant \pi.$
 Moreover, because of the constraints on the $\theta$s and on $Z,$ we have that $0\leqslant \sum a_i+\sum b_j\leqslant 2\pi.$
 Denote with $\Omega$ the region of $\R^7$ defined by all these inequalities.
 
 We now proceed by first dealing with the points in the boundary of $\Omega,$ then
 with the points where the function $L$ is not differentiable, and finally by finding the stationary points in the interior of $\Omega.$
 Start by calculating the partial derivatives of $L.$
 \begin{gather}
 \label{dera4} \frac{\partial L}{\partial a_4}=\log\left|\frac{\sin(a_1+a_2+a_3+a_4+b_1+b_2+b_3)}{\sin(a_4)}\right|\\\label{dera1}
  \frac{\partial L}{\partial a_1}=\log\left|\frac{\sin(a_1+a_2+a_3+a_4+b_1+b_2+b_3)\sin(a_1+b_1)}{\sin(a_1)\sin(a_1+a_2+a_3+b_1+b_2+b_3)}\right|\\ \label{dera2}
  \frac{\partial L}{\partial a_2}=\log\left|\frac{\sin(a_1+a_2+a_3+a_4+b_1+b_2+b_3)\sin(a_2+b_2)}{\sin(a_2)\sin(a_1+a_2+a_3+b_1+b_2+b_3)}\right|\\ \label{dera3}
  \frac{\partial L}{\partial a_3}=\log\left|\frac{\sin(a_1+a_2+a_3+a_4+b_1+b_2+b_3)\sin(a_3+b_3)}{\sin(a_3)\sin(a_1+a_2+a_3+b_1+b_2+b_3)}\right|\\ \label{derb1}
  \frac{\partial L}{\partial b_1}=\log\left|\frac{\sin(a_1+a_2+a_3+a_4+b_1+b_2+b_3)\sin(a_1+b_1)}{\sin(b_1)\sin(a_1+a_2+a_3+b_1+b_2+b_3)}\right|\\ \label{derb2}
  \frac{\partial L}{\partial b_2}=\log\left|\frac{\sin(a_1+a_2+a_3+a_4+b_1+b_2+b_3)\sin(a_2+b_2)}{\sin(b_2)\sin(a_1+a_2+a_3+b_1+b_2+b_3)}\right|\\ 
  \frac{\partial L}{\partial b_3}=\log\left|\frac{\sin(a_1+a_2+a_3+a_4+b_1+b_2+b_3)\sin(a_3+b_3)}{\sin(b_3)\sin(a_1+a_2+a_3+b_1+b_2+b_3)}\right| \label{derb3}.
 \end{gather}
 
The remaining of the proof is broken into thee steps.

 \emph{Step 1: the boundary points}

 Suppose we have a maximum for $L$ in a point $P$ in the boundary of $\Omega.$ If $a_1=\pi,$ then by periodicity we would have a maximum with $a_1=0,$ so we study this case instead. The derivative of $L$ \eqref{dera1} with respect to $a_1$ is $+\infty$ if $a_1+b_1\neq k\pi$ and 
 $a_1+a_2+a_3+a_4+b_1+b_2+b_3\neq k\pi,$ and
 we would not get a maximum. Hence, either $a_1+a_2+a_3+a_4+b_1+b_2+b_3=k\pi$ or $b_1=k\pi.$ In the first case, we have that
 \begin{displaymath}
  L=\Lambda(a_2)+\Lambda(b_2)-\Lambda(a_2+b_2)+\Lambda(a_3)+\Lambda(b_3)-\Lambda(a_3+b_3)
 \end{displaymath}
and using Lemma \ref{conv} we find $L\leqslant2v_3.$ In the second case,
\begin{align*}
 L=&\Lambda(a_2)+\Lambda(b_2)-\Lambda(a_2+b_2)+\Lambda(a_3)+\Lambda(b_3)-\Lambda(a_3+b_3)\\&+\Lambda(b_2+b_3+a_2+a_3)+\Lambda(a_4)-\Lambda(b_2+b_3+a_4+a_2+a_3)
\end{align*}
and again Lemma \ref{conv} implies $L\leqslant 3v_3.$ 
If $a_4=0,$ the same reasoning implies that $P$ cannot be a maximum unless $a_1+a_2+a_3+a_4+b_1+b_2+b_3=k\pi,$ and in this case
\begin{align*}
 L=&\Lambda(a_1)+\Lambda(b_1)-\Lambda(a_1+b_1)+\Lambda(a_2)+\Lambda(b_2)+\\&-\Lambda(a_2+b_2)+ \Lambda(a_3)+\Lambda(b_3)-\Lambda(a_3+b_3)\leqslant 3v_3.
\end{align*}
If $a_1+a_2+a_3+a_4+b_1+b_2+b_3=k\pi$ once again we would have $\frac{\partial L}{\partial (-a_4)}=+\infty$ unless $a_4=0$ and we would be in the same case as before. The remaining cases are dealt by symmetry.

  \emph{Step 2: the non-smooth points}
  
 First off, notice that $L$ is differentiable at $P=(a_1,a_2,a_3,a_4,b_1,b_2,b_3)$ unless one (or more) of the following equalities (considered modulo $\pi$) holds:
 \begin{enumerate}
  \item $a_i=0$ for some $i;$
  \item $b_j=0$ for some $j;$
  \item $a_i+b_i=0$ for some $i;$
  \item $a_1+a_2+a_3+a_4+b_1+b_2+b_3=0;$
  \item $a_1+a_2+a_3+b_1+b_2+b_3=0.$
 \end{enumerate}
 These cases are dealt in a similar fashion as the boundary cases. 
 
 Suppose we have a maximum for $L$ in a point $P$ such that $a_1+a_2+a_3+b_1+b_2+b_3=k\pi.$ Then, unless $a_1+b_1=k\pi$ or $a_1+a_2+a_3+a_4+b_1+b_2+b_3=k\pi$ the derivative of $L$ with respect to $a_1$ is $+\infty,$ hence $P$
 could not be a maximum. Using Lemma \ref{conv} we obtain that in the first case,
 \begin{equation*}
   L=\Lambda(a_2)+\Lambda(b_2)-\Lambda(a_2+b_2)+\Lambda(a_3)+\Lambda(b_3)-\Lambda(a_3+b_3)\leqslant 2v_3
 \end{equation*}
and in the second
\begin{align*}
 L=&\Lambda(a_2)+\Lambda(b_2)-\Lambda(a_2+b_2)+\Lambda(a_3)+\Lambda(b_3)-\Lambda(a_3+b_3)+\\
 &\Lambda(b_2+b_3+a_2+a_3)+\Lambda(a_4)-\Lambda(b_2+b_3+a_4+a_2+a_3)\leqslant 3v_3.
\end{align*}
 The cases $a_i=k\pi,$ $b_j=k\pi,$ or $a_1+a_2+a_3+a_4+b_1+b_2+b_3=k\pi$ were already addressed before.
 If $a_1+b_1=0,$ then $a_1=b_1=0$ and it was already addressed. If $a_1+b_1=k\pi>0,$ then the derivative of $L$ in the direction $-a_1$ is $+\infty$ unless $a_1=0$ or  $a_1+a_2+a_3+b_1+b_2+b_3=k\pi,$ which are both cases
 we have dealt with already. The remaining cases are done by the symmetries of $L.$
 
 \emph{Step 3: the interior smooth points}

Now we turn to the smooth points in the interior of $\Omega.$
By equating \eqref{dera1} and \eqref{derb1} to $0,$ we find $\sin(a_1)=\pm \sin(b_1).$ Similarly $\sin(a_i)=\pm \sin(b_i)$ for $i=2,3$ by equating \eqref{dera2} to \eqref{derb2} and \eqref{dera3} to \eqref{derb3} respectively.
Because of the boundary and smoothness 
conditions, we have that in the interior of the domain this implies 
$a_i=b_i$ for $i=1,2,3.$ By putting equations \eqref{dera1} and \eqref{dera2} to $0,$ we find
\begin{equation*}
 \frac{\sin(2a_1)}{\sin{a_1}}=\pm\frac{\sin(2a_2)}{\sin{a_2}}.
\end{equation*}
Which implies that $\cos(a_1)=\pm\cos(a_2)$ and either $a_1=a_2$ or $a_1+a_2=\pi.$ However, if $a_1+a_2=\pi,$ we would have $a_1+a_2+a_3+a_4+b_1+b_2+b_3=2a_1+2a_2+2a_3+a_4\geqslant 2 \pi;$ hence, this is not possible in the interior of $\Omega.$ Similarly $a_1=a_3.$

Now by putting equation \eqref{dera4} equal to $0$ we obtain
\begin{equation*}
 \sin\left(6a_1+a_4\right)=\pm \sin(a_4)
\end{equation*}
This implies either $6a_1=k\pi$ or $6a_1+2a_4=k\pi,$ but in the first case we would not be in a smooth point (case $5$ of the previous step). By plugging everything we obtained in equation \eqref{dera1} we finally find
\begin{equation*}
 \frac{\sin(a_4)\sin(2a_1)}{\sin(a_1)\sin(2a_4)}=\pm 1
\end{equation*}
Hence $a_4=a_1$ or $a_4=\pi-a_1.$ Both cases imply that the stationary points of $L$ must be of the form $\left(\frac{k\pi}{8},\frac{k\pi}{8},\frac{k\pi}{8},\frac{k\pi}{8},\frac{k\pi}{8},\frac{k\pi}{8},\frac{k\pi}{8}\right),$ 
for $k=1,2.$ In the first case $L\cong 3.01<v_8,$ while in the second $L=8\Lambda\left(\frac{\pi}{4}\right)=v_8.$
\end{proof}

We conclude the section with the proof of Lemma \ref{symm}.

 \begin{lem:symm}
 If $\theta_i>\pi$ for $i=1,\dots,6$ and $\alpha_i=\theta_i-\pi,$ then
  $$V\left(\theta_1,\dots,\theta_6\right)=V\left(\alpha_1,\dots,\alpha_6\right).$$
 \end{lem:symm}

\begin{proof}
The value of $V\left(\alpha_1,\dots,\alpha_6\right)$ is equal, by the Murakami-Yano-Ushijima formula \cite[Theorems 1 and 2]{MY}, \cite[Theorem 1.1]{vol}, to the volume of the hyperbolic truncated tetrahedron with external dihedral angles $\alpha_1,\dots,\alpha_6.$
Thus we need to show that this formula is symmetric under the change $\left(\theta_1,\dots,\theta_6\right)\leftrightarrow\left(\alpha_1,\dots,\alpha_6\right).$ We now pass to the internal
dihedral angles $(\xi_1,\dots,\xi_6)$ with $\xi_i=\pi-\alpha_i,$ as these are more natural for the Murakami-Yano-Ushijima formula. In these variables, the formula reads
\begin{equation*}
 V(a_1,\dots,a_6):=\frac{1}{2}\textrm{Im}\left(U(z_1,\vec{a})-U(z_2,\vec{a})\right)
\end{equation*}
where we have
\begin{itemize}
\item $a_i=e^{\sqrt{-1}\xi_i};$
 \item \begin{align}\begin{split}
        U(z,\vec{a})=&\frac{1}{2}(\li(z)+\li(za_1a_2a_4a_5)+\li(za_1a_3a_4a_6)+\\&
 +\li(za_2a_3a_5a_6)-\li(-za_1a_2a_3)-\li(-za_1a_5a_6)+\\&-\li(-za_2a_4a_6)-\li(-za_3a_4a_5)),\end{split}
       \end{align}
where $\li$ is the dilogarithm function defined for $z\in \mathbb C\setminus [1,\infty)$ by 
$$\li(z)=-\int_0^z\frac{\log(1-u)}{u}du;$$

 \item $z_1$ and $z_2$ are the solutions of the equation $\alpha+\beta z+\gamma z^2=0,$ labeled in such a way as to obtain a positive value for $V;$
\item \begin{align}\begin{split}\label{alpha}\alpha=&1+a_1a_2a_4a_5+a_1a_3a_4a_6+a_2a_3a_5a_6+a_1a_2a_3+a_1a_5a_6+\\&a_2a_4a_6+a_3a_4a_5;\end{split}\end{align}
\item \begin{align}\begin{split}\label{beta}
       \beta=&-a_1a_2a_3a_4a_5a_6\big((a_1-a_1^{-1})(a_4-a_4^{-1})+\\&+(a_2-a_2^{-1})(a_5-a_5^{-1})+(a_3+a_3^{-1})(a_6-a_6^{-1})\big);\end{split}
      \end{align}

\item \begin{align}\begin{split}\label{gamma}\gamma=&a_1a_2a_3a_4a_5a_6(a_1a_2a_3a_4a_5a_6+a_1a_4+a_2a_5+a_3a_6+\\&+a_1a_2a_6+a_1a_3a_5+a_2a_3a_4+a_4a_5a_6).\end{split}\end{align}
 \end{itemize}
 
 In these variables, the symmetry we need to explore is 
 $$(a_1,a_2,a_3,a_4,a_5,a_6)\leftrightarrow(a_1^{-1},a_2^{-1},a_3^{-1},a_4^{-1},a_5^{-1},a_6^{-1}).$$

Call $\alpha,$ $\beta,$ and $\gamma$ as in formulas \eqref{alpha}-\eqref{gamma}, and $\alpha',\beta',\gamma'$ the same formulas with $a_i\ra a_i^{-1}$ for all $i.$ 
Let $z_1$ and $z_2$ be solutions of $\alpha+\beta z+\gamma z^2=0.$ Now it is immediate to check that $$\alpha'=\frac{\gamma}{a_1^2a_2^2a_3^2a_4^2a_5^2a_6^2}, \ \  \beta'=\frac{\beta}{a_1^2a_2^2a_3^2a_4^2a_5^2a_6^2}, \ \ {\rm and} \ \ \gamma'=\frac{\alpha}{a_1^2a_2^2a_3^2a_4^2a_5^2a_6^2}.$$ Hence, we need to solve the equation
\begin{displaymath}
 \alpha'+\beta' z + \gamma'z^2=\frac{1}{a_1^2a_2^2a_3^2a_4^2a_5^2a_6^2}\left(\gamma+\beta z+\alpha z^2\right)=0;
\end{displaymath}
call the solutions $\hat{z}_1$ and $\hat{z}_2.$ Since it was shown in \cite[Page 384]{MY} that $z_1$ and $z_2$ must be complex numbers with absolute value $1,$ $\hat{z}_1=\overline{z_1}$ and $\hat{z}_2=\overline{z_2}.$
Now we can compute

\begin{align*}
U&(\hat{z}_1,a_1^{-1},a_2^{-1},a_3^{-1},a_4^{-1},a_5^{-1},a_6^{-1})=\frac{1}{2}(\li(\overline{z_1})+\li(\overline{z_1a_1a_2a_4a_5})+\\&+\li(\overline{z_1a_1a_3a_4a_6})
 +\li(\overline{z_1a_2a_3a_5a_6})-\li(-\overline{z_1a_1a_2a_3})-\li(-\overline{z_1a_1a_5a_6})+\\&-\li(-\overline{z_1a_2a_4a_6})-\li(-\overline{z_1a_3a_4a_5})).
 \end{align*}
 
Because $\li(\overline{a})=\overline{\li(a)},$ we see that $$U(\hat{z}_1,a_1^{-1},a_2^{-1},a_3^{-1},a_4^{-1},a_5^{-1},a_6^{-1})=\overline{U(z_1,a_1,a_2,a_3,a_4,a_5,a_6)};$$ and
$$U(\hat{z}_2,a_1^{-1},a_2,a_3,a_4^{-1},a_5,a_6)=\overline{U(z_2,a_1,a_2,a_3,a_4,a_5,a_6)}.$$ Because we have to switch the labels as to obtain a positive value of $V,$ we finally obtain
$V(a_1^{-1},a_2^{-1},a_3^{-1},a_4^{-1},a_5^{-1},a_6^{-1})=V(a_1,a_2,a_3,a_4,a_5,a_6).$
\end{proof}


 \section{The volume conjecture for fundamental shadow links}\label{secvol}

In this section we define the family of fundamental shadow links and prove the volume conjecture for them.
The building block for these links is a $3$-ball with $4$ disks on its boundary, and $6$ arcs connecting them, as in picture \ref{fig:bblock}. If we take $c$ building blocks $B_1,\dots,B_c$ and glue them together along the disks, in such a way that
each endpoint of each arc is glued to some other endpoint (possibly of the same arc), we obtain a (possibly non-orientable) handlebody of genus $c+1$ with a link in its boundary, such as in picture \ref{fig:linkhand}. By taking the orientable double of this handlebody (the orientable double cover whose boundary is quotiented by the deck involution), we obtain a link inside
$M_c:=\#^{c+1}(S^1\times S^2).$ We call a link obtained this way a {fundamental shadow link}.

The most important features of these links are that their geometry and quantum invariants are well understood.
\begin{lem}\cite[Proposition 3.33]{CosThurston}
If $L\subseteq M_c$ is a fundamental shadow link, then $M_c\setminus L$ is hyperbolic of volume $2cv_8$ and shadow complexity $c.$
\end{lem}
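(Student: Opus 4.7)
The plan is to establish the lemma in two parts: first exhibit a complete hyperbolic structure of volume $2cv_8$ via an explicit ideal polyhedral decomposition, then use that volume computation to pin down the shadow complexity. For the hyperbolic structure, I would construct an ideal polyhedral decomposition of $M_c\setminus L$ in which each of the $c$ building blocks, after the orientable doubling, contributes two copies of a regular ideal hyperbolic octahedron, for a total of $2c$ octahedra. The four boundary disks of each block correspond to ideal vertices of these octahedra; under the assembly of the blocks and the doubling involution, these disk vertices are identified and combine into complete toroidal cusps of $M_c\setminus L$. Endowing each octahedron with the regular ideal hyperbolic structure (all dihedral angles $\pi/2$, volume $v_8$) then reduces Thurston's gluing equations to a purely combinatorial check: at each interior edge exactly four octahedra must meet so that the dihedral angles sum to $2\pi$, and at each cusp the link must be a Euclidean torus assembled from $\pi/2$-squares with trivial shift. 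Both properties will follow from the combinatorics of the block assembly together with the doubling involution, which force a fourfold symmetry around every edge. The volume is then additive over the pieces, yielding $\vol(M_c\setminus L)=2cv_8$.

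For the shadow complexity, the upper bound $\text{sc}(M_c\setminus L)\leqslant c$ is immediate: the block assembly itself furnishes a shadow of $M_c\setminus L$ with exactly $c$ vertices, one per building block. The matching lower bound comes from the Costantino--Thurston inequality $\vol(M)\leqslant 2\,\text{sc}(M)\,v_8$ quoted at the end of the preceding section: applied to $M=M_c\setminus L$ together with the volume computation above, this forces $\text{sc}(M_c\setminus L)\geqslant c$, and the two bounds together give equality.

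The main obstacle is the first step: precisely identifying the local polyhedral combinatorics of a doubled building block with two regular ideal octahedra, and verifying the gluing equations around each edge and at each cusp. This is the heart of the original Costantino--Thurston construction; it is intricate but requires no deep new ingredient, only a careful combinatorial analysis of how the $6$ arcs, the $4$ disks, and the doubling involution interact inside each block.
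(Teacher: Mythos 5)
The paper does not prove this lemma; it cites it directly from Proposition 3.33 of \cite{CosThurston} with no argument given, so there is no in-paper proof to compare against. Your sketch reconstructs the expected Costantino--Thurston argument, and the broad strategy is the right one: build an explicit hyperbolic decomposition of $M_c\setminus L$ whose pieces have total volume $2cv_8$, and then sandwich the shadow complexity between the obvious upper bound $c$ (the block assembly is itself a shadow with one vertex per block) and the lower bound forced by the inequality $\vol(M)\leqslant 2cv_8$ of Theorem 3.37 in \cite{CosThurston}. That lower-bound step is sound and not circular, even though Theorem 3.37 is numbered after Proposition 3.33, since the inequality is proved independently of the volume computation for fundamental shadow links; only the sharpness statement of Theorem 3.37 depends on Proposition 3.33, and you do not use it.

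The unverified step is exactly where the content lives, as you yourself acknowledge. The claim that each building block ``contributes two copies of a regular ideal hyperbolic octahedron after the orientable doubling'' is the thing to be proved, not an input. Concretely, the piece attached to a block is $B_i$ with a neighborhood of the six arcs removed, which is combinatorially a truncated tetrahedron: the four disks become truncation triangles, the four remaining boundary regions become hexagonal faces, and the arcs become six edges. Doubling $H\setminus N(L)$ along its boundary doubles each such piece along its hexagonal faces and the annuli around the arcs; one must then recognize the result as a complete hyperbolic piece of volume $2v_8$ with totally geodesic boundary (the truncation triangles, to be matched with other blocks) and cusps at the arcs, and finally verify Thurston's gluing and completeness equations globally, not just locally at a single block. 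Simply asserting ``two regular ideal octahedra'' skips that identification. As a sketch your proposal is faithful to the shape of the original proof, but this step would have to be carried out in full to constitute an actual argument.
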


The next lemma follows from the shadow reformulation of the $SO(3)$ version of the  Reshetikhin-Turaev  invariants \cite{TuJDG, Turaevbook}. The proof is given in
\cite[Proposition 4.1]{C} following 
 \cite{C:volconj}.

\begin{lem}\label{shform}
  Let $J_r=\{0,2,\dots,r-3\}.$ If $L=L_1\sqcup\cdots\sqcup L_k\subseteq M_c$ is a fundamental shadow link and $col\in J_r^k$ is a coloring of its components with even numbers, then
 \begin{equation*}
  RT_r\left(M_c,L,col\right)= \left(\frac{2\sin\left(\frac{2\pi}{r}\right)}{\sqrt{r}}\right)^{-c}\prod_{i=1}^c\begin{vmatrix}
   col(i_1) &col(i_2)&col(i_3)\\
   col(i_4)&col(i_5)&col(i_6)
  \end{vmatrix}
 \end{equation*}
where $i_j$ is the component of the link $L$ passing through the $j$-th strand of block $i.$
\end{lem}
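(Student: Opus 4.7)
The natural strategy is to apply Turaev's shadow-state-sum formula for the Reshetikhin-Turaev invariants to the canonical shadow of a fundamental shadow link. I would begin by recalling that the construction of $L\subset M_c$ via building blocks exhibits a natural two-dimensional shadow polyhedron $X$ of the pair $(M_c,L)$: each building block $B_i$ is a 3-ball containing six arcs joining four disks on its boundary, and it carries a local shadow piece consisting of a single 4-valent vertex with the tetrahedral vertex figure whose six edges are precisely the six strands of $L$ through $B_i$. Gluing the $c$ blocks along their disks assembles these local pieces into a global shadow $X$ of $(M_c,L)$ with exactly $c$ vertices, whose entire boundary $\partial X$ equals $L$. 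One checks that the blocks can be chosen so that all gleams of $X$ vanish.

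Next I would apply the shadow state sum formula (see \cite[Chapter X]{Turaevbook}), which expresses $RT_r(M_c,L,col)$ as a sum over admissible extensions of $col$ to the internal regions of $X$, weighted by quantum-dimension factors on the regions, gleam-twist factors, and one quantum $6j$-symbol at each vertex of $X$, all multiplied by an overall normalization depending on topological data of $X$. The key observation is that for the canonical shadow just described, every region of $X$ is a boundary region labeled by a component of $L$; hence there are no internal colors to sum over. Since the gleams vanish, the gleam-twist factors are trivial as well. What remains is a single product of $c$ quantum $6j$-symbols, whose six entries at the $i$-th vertex are exactly the colors $col(i_1),\dots,col(i_6)$ of the six strands through $B_i$, in the order dictated by the block combinatorics.

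The only remaining point is to identify the overall prefactor. In the shadow formula a factor of the form $\zeta_r/\sqrt{r}$ is associated to each vertex of the internal 1-skeleton of $X$, after cancellations coming from the Euler characteristics of the regions and from boundary corrections. For the $c$-vertex shadow at hand this accounting should produce exactly $\bigl(2\sin(2\pi/r)/\sqrt{r}\bigr)^{-c}$, matching the stated formula.

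The main obstacle is this last step: carefully tracking the normalization conventions of $RT_r$ used in this paper against those of Turaev's shadow formula, and verifying that the exponent is exactly $-c$ rather than differing by a boundary or Euler-characteristic correction. The restriction to even colors $col\in J_r^k$ is natural since the arcs of the shadow are doubled under the orientable double cover construction of $M_c$, and one must also confirm that this restriction is consistent with the admissibility of the $6j$-symbols at each vertex coming from the block combinatorics; both are routine once the shadow structure is in hand.
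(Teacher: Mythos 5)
The paper does not give a self-contained proof of Lemma~\ref{shform}; it defers to the shadow reformulation of the $SO(3)$ Reshetikhin--Turaev invariants \cite{TuJDG, Turaevbook} and states that the proof is given in \cite[Proposition~4.1]{C}, following \cite{C:volconj}. Your proposal correctly identifies this route: the canonical shadow $X$ of a fundamental shadow link is a simple $2$--polyhedron with one $4$--valent vertex per block, all of whose regions touch $\partial X = L$, so the shadow state sum collapses to a product of $c$ quantum $6j$--symbols with no internal summation. That much agrees with the approach the paper is relying on.

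However, you have left a genuine gap exactly where you flag it. First, the factor $\bigl(2\sin(2\pi/r)/\sqrt r\bigr)^{-c}$ is not produced by hand-waving about ``cancellations coming from Euler characteristics''; the shadow state sum of \cite[Theorem~X.3.3]{Turaevbook} carries explicit weights involving quantum dimensions of region colors raised to the Euler characteristic of each region, edge weights, vertex weights, and a global normalization in the rank $\mathcal D$, and one must actually tally these against the particular combinatorics of the fundamental shadow (the number of regions, edges and their Euler characteristics are determined, not generic) and against this paper's normalization of $RT_r$ before concluding that everything collapses to the stated prefactor. Without that bookkeeping the claimed exponent $-c$ is unverified. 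Second, a small but real inaccuracy: the gleams are not something you are free to ``choose'' to vanish. In the fundamental shadow construction the gleams of $X$ are \emph{determined} by the orientable-double construction and equal $0$ by construction; this is part of what makes the class ``fundamental'' and is what kills the twist contributions. Stating it as a choice misrepresents where the vanishing comes from. Finally, the restriction to even colors $col\in J_r^k$ is not only about consistency with the $6j$--admissibility at each vertex; it reflects the use of the $SO(3)$ theory in the shadow reformulation, and you should say so explicitly, since admissibility alone would not force evenness.
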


Finally, we recall that any compact oriented $3$-manifold with toroidal or empty boundary is obtained as a Dehn filling of some of the boundary components of the complement of some fundamental shadow link \cite[Proposition 3.36]{CosThurston}.

 To relate the Turaev-Viro invariant of $M_c\setminus L$ to the Reshetikhin-Turaev invariant of $\left(M_c,L\right)$ of Lemma \ref{shform} we use the following proposition.
 It first appeared in \cite{DKY} in a slightly weaker version; we give essentially the same proof, slightly modified when needed.

\begin{prop}\label{tvtort}
 For any link $L=L_1\sqcup\cdots\sqcup L_k$ in a closed oriented $3$-manifold $M,$
 \begin{equation*}
  TV_r(M\setminus L)=2^{b_2\left(M\setminus L\right)}\sum_{col\in J_r^k}\left|RT_r\left(M,L,col\right)\right|^2,
 \end{equation*}
where $b_2\left(M\setminus L\right)$ denotes the rank of $H_2\left(M\setminus L, \ {\Z}_2\right).$

\end{prop}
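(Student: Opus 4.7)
The plan is to extend Roberts' theorem $TV_r(N)=|RT_r(N)|^2$ for closed oriented $3$-manifolds to the case of a link exterior $N=M\setminus L$, whose boundary is a disjoint union of $k$ tori. From the TQFT viewpoint, the $SO(3)$ modular functor assigns to each boundary torus a vector space with a distinguished orthonormal basis labeled by the even colors $J_r$, and $TV_r(N)$ factors as $\langle Z_{RT}(N),Z_{RT}(N)\rangle$; expanding this inner product in the basis produces a sum $\sum_{col}|\langle Z_{RT}(N),e_{col}\rangle|^2$ whose matrix coefficients are exactly the invariants $RT_r(M,L,col)$, since the closed manifold $M$ is obtained from $N$ by gluing solid tori whose cores realize the components of $L$.

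First, I would choose a triangulation $\tau$ of $M$ with $L$ contained in its $1$-skeleton (which exists after subdivision), and truncate $\tau$ at all vertices lying on $L$ to obtain a partially ideal triangulation $\tau'$ of $M\setminus L$. Each boundary torus of $\tau'$ inherits a cellular structure coming from the links of the truncated vertices; in particular each edge of $\tau$ sitting on $L$ survives as a meridian-type edge on the corresponding boundary torus.

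Next, I would expand the Turaev-Viro state sum on $\tau'$ and reorganize it by grouping admissible colorings according to the $k$-tuple $col\in J_r^k$ assigned to the edges of $\tau$ lying on the link components (evenness being forced by $SO(3)$-admissibility). For each fixed $col$, the remaining partial state sum should be identified with $|RT_r(M,L,col)|^2$ via Roberts' original argument, adapted to a triangulation whose $1$-skeleton contains the colored link: fixing the colors on $L$ constrains Roberts' equality to yield exactly the squared RT invariant with $L$ colored by $col$. Finally, I would sum over $col\in J_r^k$ and collect the normalization factors from Lemma \ref{shform} together with the global $\left(\tfrac{\sqrt{2}\sin(2\pi/r)}{\sqrt{r}}\right)^{2|V|}$ factor of the Turaev-Viro state sum.

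The multiplicative constant $2^{b_2(M\setminus L)}$ appearing in the statement is produced by a $\Z_2$-orbit counting argument: the $H^1(M\setminus L;\Z_2)$-gauge symmetry of the $SO(3)$ theory acts on admissible colorings of $\tau'$, preserving $|RT_r(M,L,col)|$ on each orbit, so each $|RT_r(M,L,col)|^2$ is counted $2^{\dim H^1(M\setminus L;\Z_2)}$ times in the Turaev-Viro sum. Poincaré-Lefschetz duality applied to the compact manifold $M\setminus\nu(L)$ identifies this dimension with $b_2(M\setminus L)$, yielding the claimed factor. I expect the main obstacle to be the careful book-keeping in the identification step: matching the partial state sum to the Roberts formula at the level of normalizations, and isolating the $\Z_2$-symmetry contributions so that they combine to give exactly $2^{b_2(M\setminus L)}$. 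This is essentially the computation carried out in \cite{DKY} for a slightly weaker form of the proposition, and the present refinement should follow by incorporating the $\Z_2$-orbit counting into that argument.
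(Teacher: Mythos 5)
Your opening paragraph is exactly the paper's TQFT step: one writes $RT_r\big(D(M\setminus L)\big)=\langle Z_r(M\setminus L),Z_r(M\setminus L)\rangle$ and expands $Z_r(M\setminus L)$ in the solid-torus basis $(e_{col})_{col\in J_r^k}$, whose coefficients are $RT_r(M,L,col)$ since filling along these basis vectors recovers $(M,L)$ with $L$ colored by $col$. The gap is in how you pass from $TV_r(M\setminus L)$ to this Hermitian pairing with the correct constant. The paper does not re-derive this: it invokes the doubling theorem $TV_r(X)=2^{b_2(X)}RT_r(DX)$ of \cite[Theorems 2.9 and 3.2]{BePe}, adapted to the root $q=e^{\frac{2\pi i}{r}}$ in \cite[Theorems 2.9 and 3.1]{DKY}, and then runs the basis expansion. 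Your substitute for that input --- redoing Roberts' argument on a partially ideal triangulation and extracting the constant from a $\Z_2$-gauge orbit count --- is precisely the nontrivial content of the proposition, and you leave it at the level of ``should be identified via Roberts' original argument,'' so nothing is actually established there. (A smaller geometric point: truncating the vertices of $\tau$ lying on $L$ removes balls around vertices, not a tubular neighborhood of the $1$-dimensional link, so it does not produce a partially ideal triangulation of the exterior; a different construction is needed.)

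Moreover, the constant your orbit count produces is wrong as stated. For the compact exterior $X$ of $L$ in $M$ (connected, with nonempty toroidal boundary) one has $\chi(X)=\tfrac{1}{2}\chi(\partial X)=0$ and $H_3(X;\Z_2)=0$, hence $\dim H^1(X;\Z_2)=\dim H_1(X;\Z_2)=b_2(X)+1$; Poincar\'e--Lefschetz duality identifies $H_2(X;\Z_2)$ with $H^1(X,\partial X;\Z_2)$, not with the absolute group $H^1(X;\Z_2)$. So counting each $\lvert RT_r(M,L,col)\rvert^2$ with multiplicity $2^{\dim H^1(X;\Z_2)}$ would yield $2^{b_2(X)+1}$, off by a factor of $2$ from the statement. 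To repair this you would need to show, for instance, that only classes vanishing on $\partial X$ act while fixing the boundary datum $col$ (classes nontrivial on the boundary instead permuting colors, and not within $J_r$, since $c\mapsto r-2-c$ does not preserve parity for odd $r$) --- exactly the bookkeeping that the cited results of \cite{BePe} and \cite{DKY} carry out. As written, the proposal has a genuine gap at its central step, even though its TQFT framing agrees with the paper's.
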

\begin{proof}
 For a compact, oriented $3$-manifold $X$ with toroidal boundary let $DX$ denote the double of $X$ along $\partial X$ and let
  $b_2\left(X\right)$ denote the rank of $H_2\left(X, \ {\Z}_2\right).$ By \cite[Theorems 2.9 and 3.2]{BePe} for the case $q=e^{\frac{\pi i}{r}},$ adapted to other roots of unity in \cite[Theorems 2.9 and 3.1]{DKY},
we have
 \begin{displaymath}
  TV_r(X)=2^{b_2(M)}RT_r(DX).
 \end{displaymath}
Now let $X=M\setminus L.$ Because of the axioms of the TQFT associated to the Reshetikhin-Turaev invariants, we have $RT_r(DX)=\langle Z_r(X),Z_r(X)\rangle,$ where $Z_r(X)$ is
the vector in the $SO(3)$ Reshetikhinn-Turaev TQFT hermitian vector space $V_r(\partial X).$ 

The boundary of $X$ is a union of connected toroidal components $T_1\sqcup\cdots\sqcup T_k,$ and $V_r(\partial X)=V_r(T_1)\otimes\cdots\otimes V_r(T_k).$
An orthogonal basis for the vector space $V_r(T_i)$ is $(e_j)_{j\in J_r}$ where $e_j$ is the solid torus with boundary $T_i$ and whose core is colored with color $j.$ Therefore, an orthogonal basis for 
$V_r(\partial X)$ is $(e_{j_1}\otimes\cdots\otimes e_{j_k})_{j\in J_r^k}.$ Written in this basis, by the definition of the Reshetikhin-Turaev invariants,
\begin{displaymath}
 Z_r(X)=\sum_{col\in J_r^k} RT_r(M,L,col) e_{col_1}\otimes\cdots\otimes e_{col_k}
\end{displaymath}
hence
\begin{displaymath}
 \langle Z_r(X),Z_r(X)\rangle=\sum_{col\in J_r^k}\left|RT_r(M,L,col)\right|^2
\end{displaymath}

concluding the proposition.
\end{proof}

\begin{figure}
 \centering
  \begin{tikzpicture}[scale=0.8]
 \draw[thick][rotate=-45] (0,0) ellipse (1cm and 0.5cm);
 
 \draw[thick][rotate around={-45:(4,4)}] (4,4) ellipse (1cm and 0.5cm);

 \draw[thick][rotate around={45:(4,0)}] (4,0) ellipse (1cm and 0.5cm);
 \draw[thick][rotate around={45:(0,4)}] (0,4) ellipse (1cm and 0.5cm);
\draw [thick](4.71,0.71)[blue] to[out=135, in=-135] (4.71,3.29);

\draw [thick](0.71,-0.71)[red] to[out=45, in=135] (3.29,-0.71);
\draw [thick](-0.71,0.71)[green] to[out=45, in=-45] (-0.71,3.29);
\draw [thick](0.71,4.71)[orange] to[out=-45, in=-135] (3.29,4.71);
\draw [thick](0.36,0.36)[brown] -- (3.64,3.64);
\draw [thick](-0.36,4.36)[magenta,dashed]--(4.36,-0.36);
\end{tikzpicture}
\caption{The building block}\label{fig:bblock}
\end{figure}
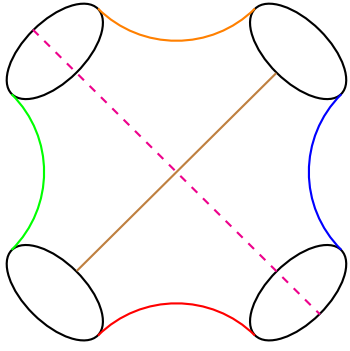

\begin{figure}
 \centering
 \begin{tikzpicture}[scale=0.7]
\draw [orange, thick] plot [smooth cycle] coordinates {(0,0) (2,2) (5,1) (8,2) (11,1) (14,2) (16,0) (14,-2) (11,-1) (8,-2) (5,-1) (2,-2)};
\draw [thick] plot [smooth cycle] coordinates {(-0.4,0) (2,2.4) (5,1.4) (8,2.4) (11,1.4) (14,2.4) (16.4,0) (14,-2.4) (11,-1.4) (8,-2.4) (5,-1.4) (2,-2.4)};
\draw [thick] plot [smooth] coordinates {(1.2,0.2) (1.6,-0.1) (2,-0.2) (2.4,-0.1) (2.8,0.2)};
\draw [thick] plot [smooth] coordinates {(7.2,0.2) (7.6,-0.1) (8,-0.2) (8.4,-0.1) (8.8,0.2)};
\draw [thick] plot [smooth] coordinates {(13.2,0.2) (13.6,-0.1) (14,-0.2) (14.4,-0.1) (14.8,0.2)};
\draw [thick] plot [smooth] coordinates {(1.45,0) (1.73,0.15) (2,0.2) (2.27,0.15) (2.55,0)};
\draw [thick] plot [smooth] coordinates {(7.45,0) (7.73,0.15) (8,0.2) (8.27,0.15) (8.55,0)};
\draw [thick] plot [smooth] coordinates {(13.45,0) (13.73,0.15) (14,0.2) (14.27,0.15) (14.55,0)};
\draw [green, thick] (2,0.15) ellipse (1.3 cm and 0.7 cm);
\draw [red, thick] (8,0.15) ellipse (1.3 cm and 0.7 cm);
\draw [blue, thick] (14,0.15) ellipse (1.3 cm and 0.7 cm);
\draw [magenta,dashed, thick] plot [smooth cycle] coordinates {(0.5,0) (2,1.5) (5,0.7) (8,1.5) (10.5,0) (8,-1.5) (5,-0.7) (2,-1.5) };
\draw [brown, thick] plot [smooth cycle] coordinates {(6.5,0) (8,1.5) (11,0.7) (14,1.5) (15.5,0) (14,-1.5) (11,-0.7) (8,-1.5) };
\end{tikzpicture}
\caption{The link in the boundary of the handlebody}\label{fig:linkhand}
\end{figure}
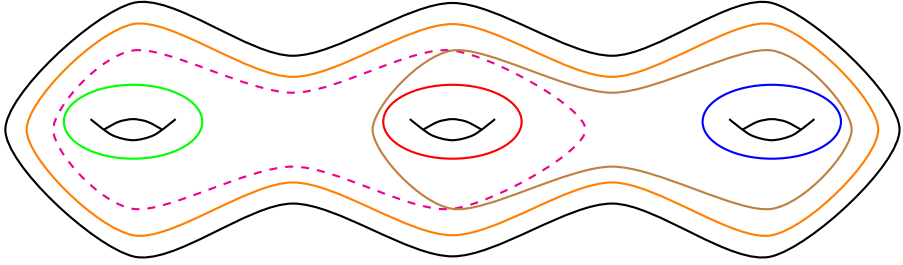

 We are ready to prove Conjecture \ref{volconj} for the complements of these links.
 \begin{teo:cong}
  For any fundamental shadow link $L=L_1\sqcup \dots \sqcup L_k$ built from $c$ blocks,
  \begin{equation*}
   \lim_{r\ra \infty} \frac{2 \pi}{r} \log\left| TV_r(M_c\setminus L)\right|=\textrm{Vol}(M_c\setminus L)=2c v_8.
  \end{equation*}
 \end{teo:cong}
\begin{proof}
 If $L=L_1\sqcup \dots \sqcup L_k$ we have by Proposition \ref{tvtort},
 $$
  TV_r(M_c\setminus L)=2^{b_2\left(M_c\setminus L\right)}\sum_{col\in J_r^k} \left|RT_r\left(M_c,L,col\right)\right|^2.
 $$

Because the number of possible colorings is polynomial in $r,$
\begin{displaymath}
 \frac{2\pi}{r} \log\left|TV_r\left(M_c\setminus L\right)\right|\leqslant \max_{col\in J_r^k} \frac{2\pi}{r}\log\left(\left| RT_r\left(M_c,L,col\right)\right|\right)^2+O\left(\frac{\log(r)}{r}\right).
\end{displaymath}

By Lemma \ref{shform}, we have that $RT_r(M_c,L,col),$ up to a factor that grows  polynomially in $r,$ is equal to
\begin{displaymath}
 \prod_{i=1}^c\begin{vmatrix}
   col(i_1) &col(i_2)&col(i_3)\\
   col(i_4)&col(i_5)&col(i_6)
  \end{vmatrix}
\end{displaymath}
where $i_j$ is the component of the link $L$ passing through the $j$-th strand of block $i.$ Hence, because of Theorem \ref{prop:bound},

\begin{displaymath}
 \lim_{r\ra\infty}\frac{2\pi}{r} \log\left|TV_r\left(M_c\setminus L\right)\right|
 \leqslant 2cv_8
\end{displaymath}

On the other hand, if we take $col=\left(\frac{r\pm1}{2},\dots,\frac{r\pm1}{2}\right)$ to be even colors, we have $$\lim_{r\ra \infty} \frac{2 \pi}{r} \log\left|TV_r\left(M_c\setminus L\right)\right|\geqslant  \lim_{r\ra\infty}\frac{2 \pi}{r}\log\abs{\Bigg|\begin{matrix}
   \frac{r\pm1}{2} &\frac{r\pm1}{2}&\frac{r\pm1}{2}\\
   \frac{r\pm1}{2}&\frac{r\pm1}{2}&\frac{r\pm1}{2}
  \end{matrix}\Bigg|_{q=e^{\frac{2\pi i}{r}}}}^{2c}=2cv_8$$
  by Lemma \ref{r/2}.
\end{proof}


\section{Applications} \label{FSL}


 In the previous sections we showed that the  Turaev-Viro invariants volume conjecture is true for the fundamental shadow links. As recalled in the Introduction, those links are universal in the sense  every orientable compact 3-manifold is obtained by a Dehn surgery along
 these links. On the other hand, the behavior of Turaev-Viro invariants under Dehn filling was studied in \cite{DK}. Here we combine these results with results about estimates of hyperbolic volume change under Dehn filling
 to derive some interesting applications.

\subsection{Dehn filling, volume  and Turaev-Viro invariants} 

Let  $N$ be a compact 3-manifold with toroidal boundary whose interior is hyperbolic, and let  $T_1, \ldots, T_k$ be some components of $\partial N.$
On each $T_i,$ choose a slope $s_i,$ such that the shortest length of any of the $s_i$ is denoted $\lmin.$  If $\lmin > 2\pi.$ then by the Geometrization Theorem, the manifold $M=N(s_1, \dots, s_k)$ obtained by Dehn filling along $s_1, \dots, s_k$ is hyperbolic. Moreover,  there is  a correlation between its volume and the volume of $N.$
  In the Theorem below, the upper inequality is by Thurston \cite[Theorem 6.5.6]{thurston:notes} and the lower inequality is 
  by the following  result proved in \cite{fkp:filling}.

\begin{teo}\cite[Theorem 1.1]{fkp:filling} \label{Thm:VolChange} 
Let  $N$ be a cusped hyperbolic $3$--manifold, containing embedded horocusps $C_1, \ldots, C_k$ (plus possibly others). On each torus $T_i = \bdy C_i,$ choose a slope $s_i,$ such that the shortest length of any of the $s_i$ is $\lmin > 2\pi.$ 
  Then the manifold $M=N(s_1, \dots, s_k)$
obtained by Dehn filling along $s_1, \dots, s_k$ is hyperbolic, and its volume satisfies
\[
  \left(1-\left(\frac{2\pi}{\lmin}\right)^2\right)^{3/2} \vol(N) \leqslant\vol(M) < \vol(N).
  \]
\end{teo}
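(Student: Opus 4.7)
The upper bound $\vol(M) < \vol(N)$ is Thurston's theorem that hyperbolic Dehn filling strictly decreases volume \cite[Theorem 6.5.6]{thurston:notes}; its proof compares the straightened fundamental cycle of $M$ with the ideal geometric triangulation of $N$ and shows that straightening strictly contracts the total simplex volume. So the core of the work is the lower bound.

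For the lower bound, the plan is to run a Hodgson--Kerckhoff cone-manifold deformation connecting $N$ to $M$. One interpolates through a one-parameter family of hyperbolic cone structures $M_\theta$, $\theta \in [0, 2\pi]$, whose singular locus is the link formed by the cores of the filling solid tori and whose cone angle along each component is $\theta$; thus $M_0 = N$ (cone angle zero, interpreted as cusps) and $M_{2\pi} = M$. The family is real-analytic in $\theta$ by Hodgson--Kerckhoff local rigidity, \emph{provided} the embedded tubes around the singular link remain non-degenerate throughout. This is where the slope-length hypothesis is used: HK's universal tube radius estimate, applied to a slope of length $\lmin > 2\pi$ on the initial maximal horocusp, forces the tubes to stay embedded and bounded away from collapse for all $\theta \in [0, 2\pi]$. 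This simultaneously promotes the endpoint $M_{2\pi}$ from a potential cone structure to a genuine smooth hyperbolic metric, proving hyperbolicity of $M$.

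Given the deformation, the Schl\"afli formula for hyperbolic cone manifolds reads
$$\frac{d\vol(M_\theta)}{d\theta} = -\frac{1}{2} \sum_{i=1}^k \ell_i(\theta),$$
where $\ell_i(\theta)$ is the length of the $i$-th singular geodesic in $M_\theta$. Integrating gives
$$\vol(N) - \vol(M) = \frac{1}{2} \int_0^{2\pi} \sum_{i=1}^k \ell_i(\theta)\, d\theta.$$
The Hodgson--Kerckhoff length-derivative estimates bound $\ell_i(\theta)$ in terms of its initial normalized length on the maximal horocusp, which equals $\ell(s_i) \geq \lmin$. Carrying out the resulting integral explicitly, summing over $i$, and normalizing against $\vol(N)$ gives an estimate of the shape
$$\vol(N) - \vol(M) \leq \left(1 - \left(1 - (2\pi/\lmin)^2\right)^{3/2}\right)\vol(N),$$
which rearranges into the stated lower bound.

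The main obstacle is making the HK tube-radius and length-derivative bounds sharp and \emph{explicit} enough in $\lmin$ to extract the clean factor $(1 - (2\pi/\lmin)^2)^{3/2}$, particularly when several cusps are filled at once: the cone-manifold deformation then involves several interacting singular components, and the Schl\"afli integrand must be bounded uniformly across them. The threshold $\lmin > 2\pi$ is essentially sharp for this approach, since below it HK's differential inequalities no longer control the integrand, and one must retreat to softer results such as the Agol--Lackenby $6$-theorem that supply hyperbolicity but no volume comparison.
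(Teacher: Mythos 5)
This statement is not proved in the paper at all: the upper inequality is attributed to Thurston \cite[Theorem 6.5.6]{thurston:notes} and the lower inequality is quoted verbatim from \cite[Theorem 1.1]{fkp:filling}, so there is no internal argument to compare yours against; what can be assessed is whether your sketch would reconstruct the cited proof, and there it has a genuine gap. The Hodgson--Kerckhoff cone-deformation machinery you invoke is simply not available under the hypothesis $\lmin>2\pi$: their universal tube-radius and length-derivative estimates require the \emph{normalized} length of the filling slopes to be at least about $7.5$, far beyond the $2\pi$ threshold (and measured on maximal cusps, whereas the theorem allows arbitrary embedded horocusps). With only $\lmin>2\pi$ one knows $M$ is hyperbolic (via the $2\pi$-theorem of Bleiler--Hodgson, or the $6$-theorem, together with geometrization), but one does not know that the family of cone manifolds $M_\theta$, $\theta\in[0,2\pi]$, exists, that the tubes stay embedded, or that the singular-locus lengths $\ell_i(\theta)$ are controlled; so the Schl\"afli integral you write down cannot be bounded. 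Your closing remark that ``$\lmin>2\pi$ is essentially sharp for this approach'' has it backwards: the HK differential inequalities already fail to apply well above $2\pi$. A secondary problem is structural: Schl\"afli-type arguments naturally produce \emph{additive} bounds on $\vol(N)-\vol(M)$ depending only on $\lmin$, and nothing in your sketch explains why the defect should scale proportionally to $\vol(N)$, which is exactly the multiplicative form $\bigl(1-(2\pi/\lmin)^2\bigr)^{3/2}\vol(N)\leqslant\vol(M)$ being claimed.

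For the record, the proof in \cite{fkp:filling} takes a different route, following an idea of Agol: one builds an explicit negatively curved Riemannian metric interpolating between the two hyperbolic structures, in the spirit of the $2\pi$-theorem construction, with Ricci curvature bounded below by $-2\bigl(1-(2\pi/\lmin)^2\bigr)^{-1}$, and then applies the volume comparison of Besson--Courtois--Gallot (in its finite-volume form, due to Boland--Connell--Souto) after rescaling the metric; the rescaling in dimension $3$ is precisely what produces the exponent $3/2$, and the $2\pi$ threshold comes from that metric construction rather than from any deformation theory. If you want to salvage a cone-deformation proof, you must either strengthen the hypothesis to the normalized-length regime where Hodgson--Kerckhoff (or the later effective drilling/filling results) apply, or supply a new argument for the existence and control of the cone family at slope length barely above $2\pi$, which is not currently known.
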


To continue recall that for a compact oriented 3-manifold $M,$ 
 we set
 $$lTV(N)=\liminf_{r\to\infty} \frac{2\pi}{r}\log\left|TV\left(M,e^{\frac{2\pi i}{r}}\right)\right|$$ and  $$LTV(N)= \limsup_{r\to\infty} \frac{2\pi}{r}\log\left|TV\left(M,e^{\frac{2\pi i}{r}}\right)\right|.$$ 
  where $r$ runs over all odd integers. 

Our results in the previous sections give the following.

\begin{teo}\label{expo}
Let $M$ be an orientable, compact 3-manifold with empty or toroidal boundary. There is a  hyperbolic link $L_1\subset M$ such that
 $$lTV(M\setminus L_1)=LTV(M\setminus L_1)=\vol(M\setminus L_1)=2cv_8,$$

where is $c>0$ is a constant depending on $L_1.$

Furthermore, if we set $N= M\setminus L_1$ then for any link $L$ in $N$ we have 
  $$lTV(N\setminus L)\geqslant  2cv_8>0.$$
    \end{teo}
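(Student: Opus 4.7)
The plan is to produce $L_1$ directly via the universality of fundamental shadow links (property (3) of the Introduction), read off the three equalities from Theorem \ref{main}, and then obtain the sublink inequality from Proposition \ref{tvtort} together with the ``trivial color'' monotonicity of Reshetikhin--Turaev invariants.

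First, since $M$ has empty or toroidal boundary, property (3) furnishes an integer $c>0$, a fundamental shadow link $L\subset \#^{c+1}(S^1\times S^2)$, and a sublink $L_0\subseteq L$, such that $M$ is obtained from $\#^{c+1}(S^1\times S^2)\setminus L$ by Dehn filling along the toroidal boundary components corresponding to $L_0$. Take $L_1\subset M$ to be the union of the cores of these filling solid tori. By construction, $M\setminus L_1$ is homeomorphic to $\#^{c+1}(S^1\times S^2)\setminus L$ as a compact $3$-manifold with toroidal boundary; hence it is itself a fundamental shadow link complement. Property (1) then gives $\vol(M\setminus L_1)=2cv_8$, Theorem \ref{main} gives $lTV(M\setminus L_1)=LTV(M\setminus L_1)=2cv_8$, and in particular $L_1$ is hyperbolic in $M$.

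For the second part, set $N=M\setminus L_1\cong \#^{c+1}(S^1\times S^2)\setminus L$ and let $L'\subset N$ be any link. Then $N\setminus L'=\#^{c+1}(S^1\times S^2)\setminus (L\cup L')$, and Proposition \ref{tvtort} applied in the closed manifold $\#^{c+1}(S^1\times S^2)$ to the link $L\cup L'$ yields
$$TV_r(N\setminus L')=2^{b_2(N\setminus L')}\sum_{\mathrm{col}}\left|RT_r\bigl(\#^{c+1}(S^1\times S^2),L\cup L',\mathrm{col}\bigr)\right|^2,$$
the sum running over colorings $\mathrm{col}\in J_r^{|L|+|L'|}$. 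Since the color $0\in J_r$ corresponds to the zero (trivial) representation, any component colored $0$ can be erased without changing the Reshetikhin--Turaev invariant. Restricting the sum to those $\mathrm{col}$ that assign $0$ to every component of $L'$ and keeping all other summands (which are non-negative) gives
$$TV_r(N\setminus L')\;\geqslant\; 2^{b_2(N\setminus L')-b_2(N)}\,TV_r(N).$$
Taking $(2\pi/r)\log|\cdot|$ and $\liminf_{r\to\infty}$, the prefactor involving $b_2$ contributes zero to the limit, so $lTV(N\setminus L')\geqslant lTV(N)=2cv_8>0$, as desired.

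The only delicate point is the monotonicity step: it reduces to the standard TQFT fact that a link component colored by the zero color may be erased, but to exploit it within Proposition \ref{tvtort} one must check that the discrepancy between $b_2(N\setminus L')$ and $b_2(N)$ affects $TV_r$ only by a factor independent of $r$, so that it is invisible to $lTV$. Apart from this bookkeeping, the argument is essentially a packaging of Theorem \ref{main} with universality on the one hand and a ``drilling can only increase $lTV$'' principle on the other.
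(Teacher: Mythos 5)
Your construction of $L_1$ and the derivation of the three equalities from Theorem \ref{main} and universality coincide exactly with the paper's argument. Where you diverge is in the sublink inequality: the paper simply observes that $N$ is obtained from $N\setminus L$ by Dehn filling and invokes the general monotonicity result $lTV(N\setminus L)\geqslant lTV(N)$ from \cite[Theorem 5.3]{DK}, whereas you re-derive that inequality in this specific setting directly from Proposition \ref{tvtort}. Your route works: since $N\setminus L' = M_c\setminus(L\cup L')$ with $M_c$ closed, Proposition \ref{tvtort} applies, the terms in the sum are non-negative, and restricting to colorings that place $0$ on every component of $L'$ recovers the sum for $TV_r(N)$ because a $0$-colored strand is the unit object and may be erased; the discrepancy $2^{b_2(N\setminus L')-b_2(N)}$ is a constant and so vanishes under $\frac{2\pi}{r}\log$. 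What you gain is a self-contained argument not relying on an external citation; what you give up is generality, since the paper's cited theorem handles arbitrary Dehn fillings, not only those realized inside a closed ambient manifold where Proposition \ref{tvtort} is directly usable. Both are valid, and your version makes the mechanism transparent.
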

    
    \begin{proof} By \cite[Proposition 3.36]{CosThurston}, there is $c=c(M)>0$ and a fundamental shadow link $L'_1\subset M_c=\#^{c+1}(S^1\times S^2)$ such that 
   
    (i) The complement of $L'_1$ is hyperbolic with volume $2c v_8;$ and
    
     (ii) $M$ is obtained by Dehn filling in $M_c$ along $L'_1.$
     
    Thus there is $L_1 \subset M$ such that $M\setminus L_1$ is homeomorphic to the complement $L'_1	$ in $M_c.$
    Now the first part of the theorem follows since $M_c \setminus L'_1$ satisfy the Turaev-Viro invariants volume conjecture.
    
    To see the second part of the claim note that if $L\subset N$ is any link in  $N= M\setminus L_1$  then $N$ is obtained from $N\setminus L$ by Dehn filling. Thus by \cite[Theorem 5.3]{DK}
    we have $lTV( N\setminus L)\geqslant lTV(N)$ and the conclusion follows.
     \end{proof}
     
 \begin{dfn} We will refer to  $N= M\setminus L_1$ in the statement of Theorem \ref{expo} as  a complement of a fundamental shadow link in $M.$
 \end{dfn}

Combining  Theorem \ref{expo}  and Theorem \ref{Thm:VolChange}  gives the following which gives  Theorem \ref{Dehnvol}  stated in the Introduction.

\begin{teo}\label{TVineq}
Let $M$ be a compact 3-manifold with empty or toroidal boundary. Then there is a hyperbolic link complement $N=M\setminus L_1$
such that $M$ is obtained by Dehn filling on $N$ and we have
\[
  \alpha(\lmin) \ lTV(N) \leqslant \vol(M) < lTV(N).
  \]
  
Here  $\alpha(x)= \left(1-\left(\frac{2\pi}{x}\right)^2\right)^{3/2} $ if $x>2\pi$ , and  $\alpha(x)=0$ if $x<2\pi.$
\end{teo}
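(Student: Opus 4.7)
The plan is to combine Theorem \ref{expo} with the Dehn filling volume comparison of Theorem \ref{Thm:VolChange}; the deep work has already been absorbed into Theorem \ref{expo}, and the present theorem is a short repackaging. First I would apply Theorem \ref{expo} to the given $M$ to produce a link $L_1 \subset M$ such that $N = M \setminus L_1$ is homeomorphic to the complement of a fundamental shadow link in some $\#^{c+1}(S^1 \times S^2)$, with $c = c(M) > 0$. Theorem \ref{expo} then guarantees that $N$ is hyperbolic and that
$$lTV(N) = LTV(N) = \vol(N) = 2 c v_8.$$
By the universality of fundamental shadow links \cite[Proposition 3.36]{CosThurston}, $M$ is realized as a Dehn filling of $N$ along slopes $s_1, \dots, s_k$ on the toroidal cusps of $N$ corresponding to the components of $L_1$; I would fix a system of embedded horocusps as in the statement of Theorem \ref{Thm:VolChange} and let $\lmin$ denote the minimum length of the $s_i$.

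For the right-hand (upper) inequality, Thurston's strict monotonicity of hyperbolic volume under Dehn filling \cite[Theorem 6.5.6]{thurston:notes} (which is also the upper bound in Theorem \ref{Thm:VolChange}) gives $\vol(M) < \vol(N)$. Substituting $\vol(N) = lTV(N)$ yields $\vol(M) < lTV(N)$.

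For the left-hand (lower) inequality I would split on $\lmin$. If $\lmin > 2\pi$, Theorem \ref{Thm:VolChange} applies directly and gives
$$\alpha(\lmin)\,\vol(N) \;=\; \left(1 - \left(\frac{2\pi}{\lmin}\right)^{\!2}\right)^{\!3/2}\! \vol(N) \;\leqslant\; \vol(M),$$
and substituting $\vol(N) = lTV(N)$ finishes this case. If $\lmin \leqslant 2\pi$, then $\alpha(\lmin) = 0$ by the definition of $\alpha$, so the inequality is trivially true. The only delicate point throughout is that $N$ must be genuinely hyperbolic (not merely have hyperbolic interior in a weak sense) in order for Thurston's theorem and for the Futer--Kalfagianni--Purcell bound to apply; this is exactly what is built into the conclusion of Theorem \ref{expo} via the Costantino--D.~Thurston description of fundamental shadow link complements, so no additional work is required here.
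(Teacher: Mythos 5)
Your proof is correct and takes essentially the same route as the paper's: invoke Theorem \ref{expo} to produce a fundamental shadow link complement $N = M \setminus L_1$ with $lTV(N) = LTV(N) = \vol(N)$, then apply Theorem \ref{Thm:VolChange} (plus Thurston's strict monotonicity for the upper bound). You spell out the trivial case $\lmin \leqslant 2\pi$ more explicitly than the paper does, but the underlying argument is identical.
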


\begin{proof}
Let $M$ be an orientable, compact 3-manifold with empty or toroidal boundary and such that the interior of $M$ admits a complete hyperbolic structure.
By Theorem \ref{expo} there is a hyperbolic link complement $N\subset M$ such that
$lTV(N)=LTV(N)=\vol(N)$ and $M$ is obtained by Dehn filling along some or all the cusps of $N,$ i.e. $M=N(s_1, \dots, s_k).$ The conclusion follows by Theorem \ref{Thm:VolChange} 
\end{proof}

Note that $ \alpha(\lmin)>0,$ unless $\lmin < 2\pi$ and that $\alpha({\lmin})$ approaches 1 as $\lmin \to \infty.$
The theorem says that the volume of $M$ is approximated by the Turaev-Viro invariants of a certain sub-manifold of $M.$ It is known 
\cite{thurston:notes} that as $\lmin \to \infty$  we have $\vol(M)\to  \vol(N),$
and by Theorem \ref{TVineq}
as $\lmin \to \infty$ we also have $\vol(M)\to  lTV(N),$ which  
 is consistent with Conjecture \ref{volconj}.
In fact, by  Conjecture \ref{volconj}
one should expect a 2-sided inequality using the Turaev-Viro invariants of $M$ itself rather than these of a submanifold $N.$ In this direction, we have an one sided inequality
given by the following.

\begin{cor}  Let  $M=N(s_1, \dots, s_k)$ a 3-manifold obtained by Dehn filing on a
 fundamental shadow link complement $N.$ If
$\lmin > 2\pi,$ then $M$ is hyperbolic and we have
$$LTV(M)\  \leqslant \  B({\lmin}) \  \vol(M), $$
where $B({\lmin})$ is a function that approaches 1 as $\lmin \to \infty.$
\end{cor}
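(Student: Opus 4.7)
The plan is to combine three ingredients that are all essentially on the table: (i) Theorem \ref{main}/Theorem \ref{expo}, which identifies $LTV(N)$ with $\vol(N)$ for a fundamental shadow link complement $N$; (ii) the Dehn filling inequality of \cite{DK} for the Turaev--Viro invariants; and (iii) the volume estimate of Futer--Kalfagianni--Purcell (Theorem \ref{Thm:VolChange}).

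First I would note that the inequality $lTV(N\setminus L)\geqslant lTV(N)$ from \cite[Theorem 5.3]{DK} used in the proof of Theorem \ref{expo} is in fact proved by a term-wise inequality on (suitable subsequences of) the Turaev--Viro invariants themselves; in particular it also yields the $\limsup$ version, so that whenever $M$ is obtained from $N$ by Dehn filling one has
\[
LTV(M)\ \leqslant\ LTV(N).
\]
Applied to $M=N(s_1,\dots,s_k)$, and combined with $LTV(N)=\vol(N)$ from Theorem \ref{expo}, this gives $LTV(M)\leqslant \vol(N)$.

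Next I would feed in Theorem \ref{Thm:VolChange}. Since $\lmin>2\pi$, $M$ is hyperbolic and
\[
\alpha(\lmin)\,\vol(N)\ \leqslant\ \vol(M), \qquad \alpha(x)=\left(1-\left(\tfrac{2\pi}{x}\right)^2\right)^{3/2}.
\]
Rearranging gives $\vol(N)\leqslant \vol(M)/\alpha(\lmin)$, and chaining with the previous step yields
\[
LTV(M)\ \leqslant\ \frac{1}{\alpha(\lmin)}\,\vol(M).
\]
So setting $B(\lmin):=1/\alpha(\lmin)$, one has $B(\lmin)\to 1$ as $\lmin\to\infty$, since $\alpha(\lmin)\to 1$, which is the required conclusion.

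The argument is essentially a bookkeeping chain, and I do not expect a serious obstacle: all three inputs are already established in the paper or cited. The only point that requires a small amount of care is verifying that the Dehn filling estimate of \cite{DK} yields an honest inequality at the level of $\limsup$ (and not merely $\liminf$) of $\frac{2\pi}{r}\log|TV_r|$; this is the step I would write out explicitly, by tracking the proof in \cite{DK} and observing that the bound on $|TV_r(M)|$ in terms of $|TV_r(N)|$ holds for every odd $r$ (up to polynomial factors absorbed into $O(\log r/r)$), which suffices for the $\limsup$ comparison.
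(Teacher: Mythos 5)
Your proposal is correct and follows essentially the same chain as the paper: $LTV(M)\leqslant LTV(N)=\vol(N)$ via the Dehn filling estimate of \cite{DK} and Theorem \ref{main}, then Theorem \ref{Thm:VolChange} to bound $\vol(N)$ by $\vol(M)/\alpha(\lmin)$, with $B(\lmin)=\alpha(\lmin)^{-1}$. The only cosmetic difference is that the paper simply cites \cite[Corollary 5.3]{DK}, which already gives the $\limsup$-level inequality $LTV(M)\leqslant LTV(N)$ that you took care to justify from the term-wise bound.
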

\begin{proof}  By Theorem \ref{main}, we have
$lTV(N)=LTV(N)=\vol (N).$ 
Since $\lmin > 2\pi,$ Theorem \ref{Thm:VolChange}  applies to give 
$$\left(1-\left(\frac{2\pi}{\lmin}\right)^2\right)^{3/2}   \     \vol (N) \leqslant \vol(M).$$

By  \cite[Corollary 5.3]{DK}, we have
$$LTV(M)\  \leqslant LTV(N)=\vol (N) \leqslant \left(1-\left(\frac{2\pi}{\lmin}\right)^2\right)^{-3/2} \vol(M).$$
Setting  $B({\lmin}) =\left(1-\left(\frac{2\pi}{\lmin}\right)^2\right)^{-3/2}$ we have the desired result.
\end{proof}

 \subsection{Application to the AMU conjecture}\label{secamu}
Theorem \ref{expo} says that if $N$ is the complement of a fundamental shadow link in  $M,$  then  for every link $L\subset N$ the invariants $TV_r( N\setminus L)$ grow exponentially with respect to $r.$
 As shown in \cite{DK:AMU} the exponential growth property has applications to the AMU conjecture\,\cite{ AMU}.  
 To give details, 
 for a compact  orientable surface of genus $g$ and $n$ boundary components, say
$\Sigma_{g, n},$ let $\mathrm{Mod}(\Sigma_{g, n})$ denote its  mapping class group.

\begin{dfn}For a mapping class $f\in \mathrm{Mod}(\Sigma_{g, n}),$ let $M(f)$ denote the mapping torus of $f.$ We  say that $f$  has a non-trivial pseudo-Anosov part if the toroidal decomposition of
$M(f)$ contains hyperbolic pieces; or equivalently if the Gromov norm of $M(f)$ is non-zero.
\end{dfn}

Recall that  $I_r$ is  the set $\{0,1,\dots,r-2\}.$ Given a coloring 
$col$ of the  components of $\partial \Sigma_{g,n}$ by elements of $I_r,$ by  \cite{BHMV2}, there is  a finite dimensional $\C$-vector space $V_r(\Sigma_{g,},col)$ as well as a projective representation
$$\rho_{r,col} : \mathrm{Mod}(\Sigma_{g,n}) \rightarrow \mathbb{P}\mathrm{Aut}(V_r(\Sigma_{g, n},col)).$$

The following statement is known as the AMU conjecture.

\begin{cnj}\label{AMU}  \cite{AMU} { Let $f \in  \mathrm{Mod}(\Sigma_{g, n})$ be a mapping class. If $f$ contains a pseudo-Anosov part, then for any big enough  $r$ there is a choice of colors $col$ of the components of $\partial \Sigma$ such that $\rho_{r,col}(\phi)$ has infinite order.}
\end{cnj}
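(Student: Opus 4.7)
Since Conjecture \ref{AMU} is open in full generality, any proof proposal is necessarily aspirational. The strategy I would follow, consistent with the tools built in this paper, is to reduce AMU to an exponential growth statement for Turaev--Viro invariants and then apply Theorem \ref{expo}. The reduction is given by the criterion of Detcherry--Kalfagianni \cite{DK:AMU}: if $f \in \mathrm{Mod}(\Sigma_{g,n})$ is realized as the monodromy of a fibered 3-manifold $M(f)$ with $lTV(M(f)) > 0$, then for large enough $r$ there is a coloring $col$ for which $\rho_{r,col}(f)$ has infinite order. Combined with the standard geometric fact that a mapping class with non-trivial pseudo-Anosov part has mapping torus with a hyperbolic JSJ piece, and hence non-zero Gromov norm, the full AMU conjecture would follow from a generalized volume conjecture asserting that $lTV$ detects Gromov norm; in the absence of such a generalized conjecture, one has to verify $lTV(M(f)) > 0$ by hand.

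The concrete plan I would pursue is therefore to restrict to monodromies produced from the universal family of this paper. Starting from a fundamental shadow link complement $N = M \setminus L_1 \subset M_c$ and an arbitrary link $L \subset N$, I would add a knot $K \subset N$ chosen so that $N \setminus (K \cup L)$ fibers over $S^1$; the existence of such $K$ should follow from a Stallings-type argument, together with the well-known fact that every compact 3-manifold with toroidal or empty boundary contains a hyperbolic fibered link. Once the fibration is in place, its monodromy $f$ has mapping torus $M(f) = N \setminus (K \cup L)$, and the growth inequality $lTV(M(f)) \geqslant lTV(N) > 0$ follows from Theorem \ref{expo} together with the Dehn-filling estimate \cite[Theorem 5.3]{DK}. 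Applying the Detcherry--Kalfagianni criterion then yields AMU for $f$, producing precisely the sort of statement announced as Theorem \ref{moreAMU}.

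The main obstacle, and the reason this does not settle Conjecture \ref{AMU} in full, is that not every mapping class with pseudo-Anosov part arises as the monodromy of a fibration produced in this way: the universality property of fundamental shadow links is universality up to Dehn filling, but it does not a priori control how arbitrary fibered structures sit inside such complements. Constructing the auxiliary knot $K$ with a prescribed monodromy, or more ambitiously proving a generalized Turaev--Viro volume conjecture detecting Gromov norm in the spirit of \cite{DK}, is the step I expect to be decisive, and indeed would be the gateway to the full conjecture; short of this, my proposal yields AMU only for the rich but restricted class of monodromies covered by Theorem \ref{moreAMU}.
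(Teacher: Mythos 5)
Your proposal correctly recognizes that the statement in question is the open AMU conjecture as stated by Andersen--Masbaum--Ueno, which the paper records but does not claim to prove; the paper's actual contribution is the partial result Theorem~\ref{moreAMU}, and your outline of that result (realize a monodromy as a fibration of $M\setminus(K\cup L)$ with $N$ a fundamental shadow link complement, deduce $lTV>0$ from Theorem~\ref{expo} and the Dehn-filling inequality of \cite{DK}, then invoke the Detcherry--Kalfagianni criterion from \cite{DK:AMU}) matches the paper's argument, as does your diagnosis of why this cannot reach the full conjecture. The one step you pass over lightly is the construction of $K$: the paper does not construct $K$ directly in $N$ but rather in $S^3$, choosing $K$ via Stallings so that $J=K\cup L\cup L'$ is a homogeneous closed braid (hence fibered with the Bennequin surface as fiber) \emph{and} so that the linking numbers of $K$ with the surgery link $L'$ are prescribed; this freedom is used to force the Seifert-surface framing on each component of $L'$ to agree with the surgery framing that recovers $M$, which is precisely what guarantees the fibration of $S^3\setminus J$ descends to a fibration of $M\setminus(K\cup L)$. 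Appealing merely to the existence of hyperbolic fibered links in $N$ would not give you a fibration extending the prescribed link $L$, so this framing bookkeeping is the substantive content behind the Stallings-type argument you invoke.
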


In \cite{AMU} Andersen, Masbaum and Ueno  verified the conjecture for $\Sigma_{0,4}.$ Later, 
Santharoubane proved it for $\Sigma_{1,1}$  \cite{San12}  and 
 Egsgaard and Jorgensen   \cite{EgsJorgr} and Santharoubane \cite{San17} gave partial results for pseudo-Anosov maps on $\Sigma_{0,2n}.$ 
 For $g\geqslant 2,$  the first examples of mappings  classes that satisfy the AMU conjecture, were given by March\'e and Santharoubane in \cite{MarSan} and the first construction that
 leads to infinitely many (independent) examples in each genus was  given by  Detcherry and Kalfagianni \cite{DK:AMU}.

Let $M$ be a closed orientable 3-manifold with empty or toroidal boundary. Recall that a link $J$ in $M$ is called fibered if the complement $M\setminus J$
is homeomorphic to the mapping torus of a mapping class $f \in  \mathrm{Mod}(\Sigma_{g, n}),$ for some $n,g\geqslant0.$ The mapping class  $f$ is called the monodromy of the  fibration.
We note that if the first Betti number of $M\setminus J$ is bigger than one then then it can fiber over $S^1$ in infinitely many different ways; that is we have infinitely many non-conjugate
mapping classes realized as monodromies of some fibration $M\setminus  J \longrightarrow S^1$ \cite[\S3]{Thurstonorm}.

In \cite{DK:AMU} the authors  show that if we have $ lTV(M(f))>0$ for the mapping torus of a class  $f \in  \mathrm{Mod}(\Sigma_{g,n}),$ then $f$ satisfies the AMU conjecture.
On the other hand $ lTV(M(f))>0$ implies, by \cite{DK}, that $M(f)$ has non-zero Gromov norm and thus $f$ contains a pseudo-Anosov part.
In \cite{DK:AMU} the authors give explicit constructions of mapping classes that satisfy these conditions.  The examples constructed in \cite{DK:AMU}  are all realized as monodromies of fibered links
in $S^3.$ Theorem \ref{expo} provides infinite families of manifolds with toroidal boundary and with Turaev-Viro invariants having exponential growth. By passing to the doubles $DN$ we obtain closed 3-manifolds with $lTV(DN)>0.$
 Any mapping class that is realized as a monodromy of a fibered link in some $N$ or $DN$ satisfies the AMU conjecture. 

Let ${\mathcal M}$ denote the set of all 3-manifolds $N$ that are complements of fundamental shadow links in a orientable 3-manifolds with empty or toroidal boundary  and their doubles $DN.$ We have the following.

 \begin{teo:moreAMU}   Given $M\in {\mathcal M}$ and a (possibly empty) link $L\subset M,$ there is  a knot $K\subset M$ such that the link $K\cup L$ is fibered in $M.$
 Furthermore, the  monodromy of any fibration of $M\setminus (K\cup L)$ is a mapping class that 
 satisfies the AMU conjecture.
 \end{teo:moreAMU}
 \begin{proof} Let $M$ and $L$ be as above and let $L'$ a link in $S^3$ so that $M$ is obtained by integral Dehn surgery on all or some of the components of $L'.$ Note, in particular, that if $M$ is a fundamental shadow link  complement   then $L'$ will contain the link corresponding to the fundamental shadow link $L_1$ in $S^3.$ Slightly abusing the notation we will also use $L$ to denote the link in $L$ in $S^3$ corresponding 
to $L.$
 
 By Stallings \cite[Theorem 2]{Stallings},  we can find a knot $K\subset S^3$ so that $J= K\cup L\cup L'$ is a fibered link in $S^3.$ Furthermore, we have have the following.

\begin{enumerate}
\item  The knot $K$ can be chosen  so that the linking numbers of $K$ with the components of $ L\cup L'$ are arbitrary; that is matching any pre-chosen collection of integers.

\item The link $J$ is represented as a closed braid (a homogeneous braid in fact) and the fiber, say $F_K,$  of $S^3\setminus J$ is the natural Seifert surface associated to the closed braid projection.
The reader is referred to \cite{DK:AMU} for a refinement of Stallings construction that produces hyperbolic fibered links and for pictures of the fiber surface.
\end{enumerate}
 
 The components of $L'$ are equipped with the framings needed to recover $M$ from $S^3\setminus J$ by Dehn filling. On the other hand, the Seifert surface $F_K$ also defines a natural
 framing on $J:$ defined by the linking number of $J$ with a push-off of it on $F_K$ in the direction of the inward normal vector. The surface framing changes as we change the linking numbers of $K$ with the components of
 $J.$ Since we are allowed to chose these numbers to be arbitrary, by re-choosing $K,$ we can pick  $K$ so the framings defined on the components of $L'$ by the fiber $F_K$ agree with the framings of the surgery
 needed to recover $M.$ Then, the surgery caps off some components of $K$ with disks and also produces a fibered manifold. That is $M\setminus (K\cup L)$ will fiber over $S^1$ with fiber the surface obtained by $F_K$ by capping the components of $\partial F_K$ corresponding to $L'.$  By the discussion in the paragraph before the statement of the theorem,  the monodromy of such a fibration gives a mapping class
 that contains non-trivial pseudo-Anosov pieces and satisfies the AMU conjecture.
 \end{proof}

Theorem \ref{moreAMU} can be used to construct an abundance of mapping classes that satisfy the AMU conjecture. In particular, working with fibered knots in the closed manifolds of  ${\mathcal M}$ we can construct classes in $ \mathrm{Mod}(\Sigma_{g, 1}).$  This approach is developed in \cite{DK:cosets} where Detcherry and Kalfagianni show that for every closed oriented 3-manifold $M,$
 and $g$ a sufficiently large integer,  $\mathrm{Mod}(\Sigma_{g,1})$ contains a coset of an abelian subgroup of rank $\lfloor \frac{g}{2}\rfloor,$ consisting of pseudo-Anosov monodromies of fibered knots  in $M.$ Furthermore, they prove a similar result for rank two free cosets of $\mathrm{Mod}(\Sigma_{g,1}).$

Note that  all the manifolds in  ${\mathcal M}$  have Gromov norm at least $4v_8,$ but there exist fibered links of smaller Gromov norm.  It follows that there are  mapping classes in  $\mathrm{Mod}(\Sigma_{g, n}),$ $n\neq 0,$ that do not appear as monodromies of fibered links in any manifold in  ${\mathcal M}.$
We finish the subsection with the following.

\begin{que} Which mapping classes are realized by Theorem
\ref{moreAMU}?
\end{que}


\appendix
\section{Appendix}

The following theorem was originally proved by Costantino in \cite{C} for quantum $6j$-symbols evaluated at the root of unity $q=e^{\frac{\pi i}{r}},$ which is different from the one $q=e^{\frac{2\pi i}{r}}$ we considered in this paper. The main difference between the two cases is the following: For certain argument to work, some technical constrains have to be put on the sequence of $6$-tuples. It turns out that at $q=e^{\frac{\pi i}{r}}$ as considered in \cite{C} the $6$-tuples satisfying these technical  constrains never satisfy the admissibility conditions and, as a consequence, the ``evaluation'' of the quantum $6j$-symbols has to be modified; but at  $q=e^{\frac{2\pi i}{r}}$ the set of $6$-tuples satisfying both the  technical constrains and the admissibility conditions is non-empty, and those $6$-tuples are exactly the ones that give dihedral angles of ideal or hyperideal truncated tetrahedra. In this Appendix, we include a proof of the result at the root  $q=e^{\frac{2\pi i}{r}}$ for the interested readers. A similar result can also be found in \cite{Chen-Murakami}.

\begin{teo}[\cite{C}]\label{Vol}
Let $\{(n_1^{(r)},\dots,n_6^{(r)})\}$ be a sequence of 
$6$-tuples such that
\begin{enumerate}[(1)]
\item $0\leqslant Q_j-T_i\leqslant \frac{r-2}{2}$ for $i=1,\dots,4$ and $j=1,2,3,$ and
\item $\frac{r-2}{2}\leqslant T_i\leqslant r-2$ for $i=1,\dots, 4.$  
\end{enumerate}
Let $\theta_i=\lim_{r\rightarrow\infty}\frac{2\pi n_i^{(r)}}{r}$
and let $\alpha_i=|\pi-\theta_i|.$
Then
\begin{enumerate}[(1)]
\item  $\alpha_1,\dots,\alpha_6$ are the dihedral angles of an ideal or a hyperideal hyperbolic tetrahedron $\Delta,$ and
\item as $r$ runs over all the odd integers 
$$\lim_{r\to\infty}\frac{2\pi}{r}\log \Bigg|\bigg|\begin{array}{ccc}n_1^{(r)} & n_2^{(r)} & n_3^{(r)} \\n_4^{(r)} & n_5^{(r)} & n_6^{(r)} \\\end{array} \bigg|_{q=e^{\frac{2\pi i}{r}}}\Bigg|=Vol(\Delta).$$
\end{enumerate}
\end{teo}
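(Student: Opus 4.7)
The plan is to refine the upper bound in Theorem \ref{prop:bound} into a precise asymptotic equality by combining the asymptotic estimate of Lemma \ref{stima} for the quantum factorials with a saddle-point analysis of the sum in \eqref{sixj}, and then to identify the resulting limit with the hyperbolic volume through the Murakami-Yano-Ushijima formula already used in the proof of Lemma \ref{symm}.

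First I would check that hypotheses (1) and (2) force, in the limit $r\to\infty$, the correct geometric picture. By Lemma \ref{sym} we may arrange that $\theta_i\leqslant\pi$ for each $i$, so that $\alpha_i=\pi-\theta_i$ is the angle defect of an internal dihedral angle $\xi_i=\theta_i$. Condition (2) becomes $\theta_a+\theta_b+\theta_c\geqslant 2\pi$ at each of the four vertex links, i.e.\ $\alpha_a+\alpha_b+\alpha_c\leqslant\pi$, and combined with the triangle inequalities inherited from $r$-admissibility this is exactly the Bao-Bonahon characterization of dihedral angles of an ideal or hyperideal truncated hyperbolic tetrahedron $\Delta$. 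This establishes conclusion (1).

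Next, applying Lemma \ref{stima} to every factorial in \eqref{sixj} shows that the logarithm of the modulus of the summand indexed by $z$ equals $\tfrac{r}{2\pi}\bigl(F(Z,\vec\theta)+\sum_{i=1}^4\nu(v_i)\bigr)+O(\log r)$, with $Z=2\pi z/r$ and $F,\nu$ as defined in Section \ref{secbound}. The crucial step, and the main obstacle, is the phase analysis: under hypotheses (1) and (2) every summation index satisfies $r/2\leqslant T_i\leqslant z\leqslant Q_j\leqslant T_i+(r-2)/2<r-1$, so $\{z+1\}$ is negative imaginary while each $\{z-T_i\}$ and $\{Q_j-z\}$ is positive imaginary. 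One checks, exactly as in the proof of Lemma \ref{r/2}, that the sign $(-1)^z$ is compensated by the sign flip of $\{z+1\}$ when $z$ increments, so all summands share a common phase. Consequently the modulus of the sum equals, up to a factor polynomial in $r$, the modulus of the largest summand, and the inequality of Theorem \ref{prop:bound} becomes an equality:
\begin{equation*}
\frac{2\pi}{r}\log\Bigg|\bigg|\begin{matrix} n_1&n_2&n_3\\ n_4&n_5&n_6\end{matrix}\bigg|\Bigg|=\max_{Z}\bigl(F(Z,\vec\theta)+{\textstyle\sum_i}\nu(v_i)\bigr)+O\Bigl(\tfrac{\log r}{r}\Bigr).
\end{equation*}

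Finally I would identify this maximum with $\textrm{Vol}(\Delta)$. Setting $\partial F/\partial Z=0$ gives the critical point equation
\begin{equation*}
\prod_{i=1}^4\sin(Z-U_i)=\sin(Z)\prod_{j=1}^3\sin(V_j-Z),
\end{equation*}
which, after the substitutions $w=e^{iZ}$ and $a_i=e^{i\xi_i}$ with $\xi_i=\pi-\alpha_i$, becomes precisely the quadratic $\alpha+\beta w^2+\gamma w^4=0$ appearing in the Murakami-Yano-Ushijima formula recalled in the proof of Lemma \ref{symm}; its roots $z_1,z_2$ are exactly the two values of $w^2$ arising from the two critical points of $F$ on the admissible interval. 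A direct computation, writing each Lobachevski function as the imaginary part of the corresponding dilogarithm $\textrm{Li}_2(e^{i\cdot})$, then identifies the critical value of $F+\sum\nu$ with $\tfrac{1}{2}\,\textrm{Im}(U(z_1,\vec a)-U(z_2,\vec a))=\textrm{Vol}(\Delta)$, yielding conclusion (2). A secondary subtlety is uniformity of the Laplace step in the parameters, which is handled by the non-degeneracy of the Hessian $\partial^2 F/\partial Z^2$ at the critical point in the hyperideal case and by continuity from this locus in the ideal boundary case where some $\Delta(v_i)$ vanishes.
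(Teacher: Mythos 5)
Your strategy is essentially the one used in the paper's Appendix: apply Lemma \ref{stima} to every factorial, observe that hypotheses (1)--(2) force all summands $S_z$ to have the same sign so the sum grows like its largest term, verify the Bao--Bonahon conditions for the angles, and identify the limit with $\mathrm{Vol}(\Delta)$ via the Murakami--Yano--Ushijima formula as in \cite{C}. A few steps, however, need repair. First, the reduction ``by Lemma \ref{sym} we may arrange $\theta_i\leqslant\pi$'' is not available: the symmetries of Lemma \ref{sym} flip prescribed subsets of colors (never a single one), so in general one cannot make all six limiting angles at most $\pi$, and the substitution $n_i\mapsto r-2-n_i$ need not preserve hypotheses (1)--(2) on the $T_i$ and $Q_j$. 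The paper proves part (1) directly: the hypotheses give $2\pi\leqslant\theta_i+\theta_j+\theta_k\leqslant 4\pi$ and $0\leqslant\theta_i+\theta_j-\theta_k\leqslant 2\pi$ around each vertex, which become the Bao--Bonahon inequalities $\alpha_i+\alpha_j+\alpha_k\leqslant\pi$ in either case $\theta_i=\pi\pm\alpha_i$. Second, your chain ``$z\leqslant Q_j\leqslant T_i+(r-2)/2<r-1$'' fails when $T_i$ is close to $r-2$; the correct reason one may restrict to $z\leqslant r-2$ (so that $\{z+1\}$ has the claimed sign) is that summands with $z\geqslant r-1$ vanish, since $\{z+1\}!=0$ there.

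The more substantive gap is in the critical-point analysis. You assert that the maximum of $F$ is attained at a solution of the critical equation and refer to ``the two critical points of $F$ on the admissible interval''. One must actually prove the maximum is interior and unique: the paper shows, using hypotheses (1)--(2) (which give $V_j-U_i\in(0,\pi)$ and $2\pi-U_i\in(0,\pi)$) together with the fact that $\cot\alpha+\cot\beta>0$ when $\alpha+\beta<\pi$, that $F''<0$ on $(\max U_i,\min(V_j,2\pi))$ and that $F'\to\pm\infty$ at the two endpoints, so $F$ is strictly concave with a unique interior maximizer $Z_0$. In particular $F$ has one critical point, not two; the two roots $z_1,z_2$ of the MYU quadratic both enter the volume through $\tfrac12\mathrm{Im}\left(U(z_1,\vec a)-U(z_2,\vec a)\right)$, and matching the single critical value $F(Z_0)+\sum_i\nu(v_i)$ with that expression is the nontrivial computation that the paper (like you) delegates to \cite{C}. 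Finally, no Hessian non-degeneracy or uniformity of a Laplace step is needed: since all $S_z$ share one sign and there are only polynomially many of them, the largest-term bound already yields the limit, including in the ideal boundary case.
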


\begin{proof} (1). By Bao and Bonahon\,\cite{BB}, six positive numbers $\alpha_1,\dots,\alpha_6$ are the dihedral angles of an ideal or a hyperideal tetrahedron if and only if around each vertex, $\alpha_i+\alpha_j+\alpha_k\leqslant \pi.$ The given conditions imply that around each vertex, $2\pi\leqslant \theta_i+\theta_j+\theta_k\leqslant 4\pi$ and $0\leqslant \theta_i+\theta_j-\theta_k\leqslant 2\pi.$ Depending on whether $\theta_i$ lies in $[0,\pi]$ or $[\pi,2\pi],$ these conditions correspond exactly  to the Bao-Bonahon conditions.

(2). By Lemma \ref{stima},  we have 
\begin{equation*}
\begin{split}
&\lim_{r\to\infty}\frac{2\pi}{r}\log\left|\Delta(n_i,n_j,n_k)\right|\\=&-\frac{1}{2}\Lambda\big(\frac{\theta_1+\theta_2-\theta_3}{2}\big)-\frac{1}{2}\Lambda\big(\frac{\theta_2+\theta_3-\theta_1}{2}
\big)\\
&-\frac{1}{2}\Lambda\big(\frac{\theta_3+\theta_1-\theta_2}{2}\big)+\frac{1}{2}\Lambda\big(\frac{\theta_1+\theta_2+\theta_3}{2}\big).
\end{split}
\end{equation*}

Next, we study the asymptotics of $$\sum_{z=\max(T_i)}^{\min(Q_j)}\frac{(-1)^z[z+1]!}{\prod_{i=1}^4[z-T_i]!\prod_{j=1}^3[Q_j-z]!}.$$
Let 
$$S_z=\frac{(-1)^z[z+1]!}{\prod_{i=1}^4[z-T_i]!\prod_{j=1}^3[Q_j-z]!}.$$
If $\lim_{r\to\infty}\frac{2\pi z}{r}=Z,$ then by Lemma \ref{stima} we have
$$\lim_{r\to\infty}\frac{2\pi}{r}\log|S_{z}|=\sum_{i=1}^4\Lambda(Z-U_i)+\sum_{j=1}^3\Lambda(V_j-Z)-\Lambda(Z).$$
The strategy is to show that all $S_z$'s for $z$ in between $\max(T_i)$ and $\min(Q_j)$ have the same sign so the growth rate of the sum is determined by that of the largest term. 

Since $T_i\leqslant z$ and $z\leqslant Q_j,$ and by the assumption that $Q_j-T_i\leqslant \frac{r-2}{2},$ we have $0\leqslant z-T_i\leqslant \frac{r-2}{2}$ and  $\leqslant Q_j-z\leqslant \frac{r-2}{2}$ for all $i=1,\dots, 4$ and $j=1,2,3.$
Hence 
$$0\leqslant \frac{2\pi(z-T_i)}{r}\leqslant\pi\quad\text{and}\quad 0\leqslant \frac{2\pi(Q_j-z)}{r}\leqslant\pi.$$
Also, by the assumption that $T_i\leqslant \frac{r-2}{2},$ $z\geqslant T_i,$ we have $\frac{r-2}{2}\leqslant z.$ Since $[z+1]!=0$ when $z>r-2,$ we can assume that $z\leqslant r-2.$  Hence
$$\pi\leqslant \frac{2\pi z}{r}\leqslant 2\pi.$$
As a consequence, we have 
$$\frac{S_{z}}{S_{z-1}}=-\frac{\sin{\frac{2\pi(z+1)}{r}}\sin{\frac{2\pi(Q_1-z+1)}{r}}\sin{\frac{2\pi(Q_2-z+1)}{r}}\sin{\frac{2\pi(Q_3-z+1)}{r}}}{\sin{\frac{2\pi(z-T_1)}{r}}\sin{\frac{2\pi(z-T_2)}{r}}\sin{\frac{2\pi(z-T_3)}{r}}\sin{\frac{2\pi(z-T_4)}{r}}}>0,$$
and hence all the $S_z$'s have the same sign.

Next we show that the function $F:(\max(U_i), \min(V_j,2\pi))\rightarrow \mathbb R$ defined by
$$F(Z)=\sum_{i=1}^4\Lambda(Z-U_i)+\sum_{j=1}^3\Lambda(V_j-Z)-\Lambda(Z)$$
has a unique maximum point $Z_0.$   Indeed, by a direct computation, one has
$$F'(Z)=\log\bigg(\frac{\sin(2\pi-Z)\sin(V_1-Z)\sin(V_2-Z)\sin(V_3-Z)}{\sin(Z-U_1)\sin(Z-U_2)\sin(Z-U_3)\sin(Z-U_4)}\bigg),$$
and
$$F''(Z)=-\sum_{i=1}^4\cot(Z-U_i)-\sum_{j=1}^3\cot(V_j-Z)-\cot(2\pi-Z).$$
Here we recall the fact that if $\alpha$ and $\beta$ are two real numbers in $(0,\pi)$ with $\alpha+\beta<\pi,$ then $\cot(\alpha)+\cot(\beta)>0.$ Now since $(2\pi-Z)+(Z-U_4)=2\pi-U_4\in(0,\pi)$ and $ (V_i-Z)+(Z-U_i)=V_i-U_i\in(0,\pi)$ for $i=1,2,3,$  we have $F''(Z)<0,$ and hence $F'(Z)$ is strictly decreasing and $F(Z)$ is strictly concave down. 
Together with 
$$\lim_{Z\to\max(U_i)}F'(Z)=+\infty\quad\text{and}\quad\lim_{Z\to\min(V_j,2\pi)}F'(V)=-\infty,$$ 
we conclude that there is a unique $Z_0\in(\max(U_i), \min(V_j,2\pi))$ such that $F'(Z_0)=0.$ By the concavity of $F,$ $Z_0$ is the unique maximum point of $F.$ 

Now for each sequence $z^{(r)}$ with $\lim_{r\to\infty}\frac{2\pi z^{(r)}}{r}=Z,$ by Lemma \ref{stima} one has 
$$|S_{z^{(r)}}|=\exp\Big\{\frac{r}{2\pi}F(Z)+O(\log(r))\Big\}\leqslant \exp\Big\{\frac{r}{2\pi}F(Z_0)+C\log(r)\Big\}.$$
Since all the $S_z$'s have the same sign, we have
$$\bigg|\sum_{z=\max(T_i)}^{\min(Q_j)}S_z\bigg|\leqslant \big(\min(Q_j,r-2)-\max(T_i)\big)\exp\Big(\frac{r}{2\pi}F(Z_0)+C\log(r)\Big),$$
and hence
\begin{equation*}
\begin{split}
&\limsup_{r\to\infty}\frac{2\pi}{r}\log\bigg|\sum_{z=\max(T_i)}^{\min(Q_j)}S_z\bigg|\\
\leqslant &\lim_{r\to\infty}\frac{2\pi}{r}\log\bigg\{\big(\min(Q_j,r-2)-\max(T_i)\big)\exp\Big(\frac{r}{2\pi}F(Z_0)+C\log(r)\Big)\bigg\}\\
=&F(Z_0).
\end{split}
\end{equation*}

On the other hand, let $z^{(r)}$ be a sequence such that 
$$\lim_{r\to\infty}\frac{2\pi z^{(r)}}{r}=Z_0.$$ Then by Lemma \ref{stima}
$$\lim_{r\to\infty}\frac{2\pi}{r}\log|S_{z^{(r)}}|=F(Z_0).$$
Again since all the $S_z$'s have the same sign, we have
$$\bigg|\sum_{z=\max(T_i)}^{\min(Q_j)}S_z\bigg|>S_{z^{(r)}},$$ and hence
$$\liminf_{r\to\infty}\frac{2\pi}{r}\log\bigg|\sum_{z=\max(T_i)}^{\min(Q_j)}S_k\bigg|\geqslant \lim_{r\to\infty}\frac{2\pi}{r}\log|S_{z^{(r)}}|=F(Z_0).$$
Therefore, we have 
$$\lim_{r\to\infty}\frac{2\pi}{r}\log\bigg|\sum_{z=\max(T_i)}^{\min(Q_j)}S_z\bigg|=F(Z_0),$$
and
\begin{equation*}
\begin{split}
&\lim_{r\to\infty}\frac{2\pi}{r}\log \Bigg|\bigg|\begin{array}{ccc}n_1^{(r)} & n_2^{(r)} & n_3^{(r)} \\n_4^{(r)} & n_5^{(r)} & n_6^{(r)} \\\end{array} \bigg|_{q=e^{\frac{2\pi i}{r}}}\Bigg|\\
=&-\frac{1}{2}\sum_{i,j}\Lambda(V_j-U_i)+\frac{1}{2}\sum_i\Lambda(U_i)+F(Z_0).
\end{split}
\end{equation*}

Then as argued in \cite{C}, by the Murakami, Yano and Ushijima formula \cite[Theorems 1 and 2]{MY}, \cite[Theorem 1.1]{vol} and their symmetry, if $\alpha_1,\dots,\alpha_6$ are the dihedral angles of a hyperideal tetrahedron $\Delta$ and  $\theta_i=\pi\pm\alpha_i,$ then the right hand side is exactly the volume of $\Delta.$
\end{proof}

\bibliographystyle{hamsplain}
\bibliography{biblio03-07}
\noindent

 \noindent 
Giulio Belletti,
Scuola Normale Superiore,
Pisa, Italy\ \ 
(giulio.belletti@sns.it)\\

\noindent 
Renaud Detcherry,
Max Planck Institute for Mathematics, Vivatsgasse 7
53111 Bonn, Germany\   \
(detcherry@mpi-bonn.mpg.de)
\\

 \noindent 
Effstratia Kalfagianni,
Department of Mathematics, Michigan State University,
East Lansing, MI 48824, USA\ \ 
(kalfagia@math.msu.edu)\\

\noindent
Tian Yang,
Department of Mathematics,  Texas A\&M University,
College Station, TX 77843, USA\ \
(tianyang@math.tamu.edu)
\end{document}